\documentclass[10pt]{article}
\usepackage[margin=1in]{geometry} 
\usepackage{a4wide}
\usepackage[latin1]{inputenc} 
\usepackage{algorithmic} 
\usepackage{amsmath}
\usepackage{amsfonts} 
\usepackage{amssymb} 
\usepackage{graphicx}
\usepackage{amsthm} 
\usepackage{mathabx}
\usepackage{mathtools} 
\usepackage[ruled]{algorithm2e}
\usepackage{enumerate} 
\usepackage{color} 
\usepackage{subfigure}
\usepackage{epigraph} 
\usepackage{hyperref}
\usepackage{authblk}
\usepackage{multirow}
     
\mathtoolsset{showonlyrefs=true}

\newtheorem{definition}{Definition}[section]

\newtheorem{remark}{Remark}[section]
\newtheorem{lemma}{Lemma}[section]
\newtheorem{theorem}{Theorem}[section]
\newtheorem{fact}{Fact}[section]

\newtheorem{corollary}{Corollary}[section]
\newtheorem{alg}{Algorithm}[section]

\newgeometry{left=2cm, right=2cm, top=2cm, bottom=2cm}

\providecommand{\keywords}[1]{\textbf{\textit{Keywords: }} #1}
\providecommand{\msc}[1]{\textbf{\textit{Subject Classification: }} #1}

\title{Preconditioners for 
  Saddle Point Problems on Truncated Domains in Phase Separation Modelling}

\author[1]{Pawan Kumar\thanks{pawan.kumar@iiit.ac.in}}
\affil[1]{Old Mumbai Highway}
\affil[1]{International Institute of Information Technology, Hyderabad}
\affil[1]{Hyderabad 500032, India}
 
\date{\today}

\begin{document}

\maketitle

\begin{abstract}
  The discretization of Cahn-Hilliard equation with obstacle potential leads to
  a block $2 \times 2$ {\em non-linear} system, where the $(1,1)$ block has a
  non-linear and non-smooth term. Recently a globally convergent Newton Schur
  method was proposed for the non-linear Schur complement corresponding to this
  non-linear system. The solver may be seen as an inexact Uzawa method which has
  the falvour of an active set method in the sense that the active sets are
  first identified by solving a quadratic obstacle problem corresponding to the
  $(1,1)$ block of the block $2 \times 2$ nonlinear system, and a new decent
  direction is obtained after discarding the active set region. The problem
  becomes linear on nonactive set, and corresponds to solving a linear saddle
  point problem on truncated domains. For solving the quadratic obstacle
  problem, various optimal multigrid like methods have been proposed. In this
  paper solvers for the truncated saddle point problem is considered. Three
  preconditioners are considered, two of them have block diagonal structure, and
  the third one has block tridiagonal structure. One of the block diagonal
  preconditioners is obtained by adding certain scaling of stiffness and mass
  matrices, whereas, the remaining two involves Schur complement.  Eigenvalue
  bound and condition number estimates are derived for the preconditioned
  untruncated problem. It is shown that the extreme eigenvalues of
  the preconditioned truncated system remain bounded by the extreme eigenvalues
  of the preconditioned untruncated system. Numerical experiments confirm the
  optimality of the solvers.
\end{abstract}
\keywords{Phase field, Preconditioner, Saddle Point, Newton Schur}\\
\msc{65F08, 35P15, 35J86}

\section{Introduction} 
The Cahn-Hilliard equation was first proposed in 1958 by Cahn and Hilliard
\cite{Cahn1958} to study the phase separation process in a binary alloy. Here
the term phase stands for the concentration of different components in an
alloy. It has been empirically observed that the concentration changes from a
given mixed state to a spatially separated two phase state when the alloy under
preparation is subjected to a rapid cooling below a critical temperature. This
rapid reduction in the temperature the so-called deep quench limit has been
found to be modeled efficiently by obstacle potential proposed by Oono and Puri
\cite[Fig. 7, p. 439]{Oono1987} in 1987, and analyzed by Blowey and Elliot
\cite[p. 237, (1.14)]{Blowey1991}. The phase separation has been noted to be
highly non-linear, and the obstacle potential
emulates the nonlinearity and non-smoothness that is empirically observed and
much desired in numerical smulations.  Consequently, handling the non-smoothness
as well as designing robust iterative procedure have been a subject of much
active research during last decades. Assuming semi-implicit time discretizations
\cite{Blowey1992} to alleviate the time step restrictions, most of the proposed
methods essentially differ in the way the nonlinearity and non-smoothness are
handled. There seems to be two main approaches to handle the non-smoothness:
regularization around the non-smooth region and subsequently using a variant of
smooth solvers, for example, as in \cite{Bosch2014}, or an active set like
approach \cite{Graeser2009}, i.e., where one identifies the active sets via a
nonlinear solver, subsequently, after discarding the active set nodes, we obtain a
reduced (or truncated) problem which is linear. Moreover, the global convergence
of the nonlinear solver may be ensured by a proper damping parameter, for example, as done in \cite{Graeser2009}.

The non-linear problem corresponding to Cahn-Hilliard equation with obstacle
potential could be written as a non-linear system in block $2 \times 2$ matrix
form as follows:
\begin{align}
  \begin{pmatrix}
    F & B^T \\
    B & -C
  \end{pmatrix}
  \begin{pmatrix}
    u \\ w
  \end{pmatrix}
  \ni
  \begin{pmatrix}
    f \\ g
  \end{pmatrix}, \quad u, w \in \mathbb{R}^n, \label{eqn:saddle}
\end{align}
where $u$ and $w$ are unknowns corresponding to order parameter and chemical
potential respectively, $F = A + \partial I_K,$ where $I_K$ denotes the
indicator functional for $u$ corresponding to the admissible set $K.$ We note that $F(\cdot)$ is a set valued mapping due to
the presence of set-valued operator $\partial I_K,$ hence, we have inclusion  in \eqref{eqn:saddle} instead of equality. The matrix $A$ corresponds to
Laplacian with Neumann boundary conditions perturbed by a rank one
term, and is multiplied by a parameter corresponding to interface
width. On the other hand, $C$ is also Laplacian with Neumann boundary condition,
but multiplied by the time step parameter. Both nonlinearity and non-smoothness
are due to $\partial I_K$ in $F.$ Various non-linear and nonsmooth solvers have
been proposed for \eqref{eqn:saddle} \cite{Banas2009, Bosch2014}.  

By nonlinear Gaussian elimination of $u,$ the system above could
be reduced to a nonlinear Schur complement system in $w$ variables
\cite{Graeser2009}, where the ``negative" nonlinear Schur complement is given by
$C + B(F)^{-1}B^T.$ Here $(\cdot)^{-1}$ is understood as inversion in the
nonlinear sense. In \cite{Graeser2009}, a globally convergent Newton method is
proposed for this nonlinear Schur complement system, which is interpreted as a
preconditioned Uzawa iteration.  To solve the inclusion $F(x)
\ni y$ corresponding to the quadratic obstacle problem, many methods have been
proposed such as block Gauss-Seidel \cite{Barrett2004, Glowinski2008}, monotone
multigrid method \cite{Kornhuber1994, Kornhuber1996, Mandel1984}, truncated
monotone multigrid \cite{Graeser2009b}, and truncated Newton multigrid
\cite{Graeser2009b}. 

Once active sets are identified, the corresponding rows and columns are anhilated,
we then obtain a reduced linear
system as follows
\begin{align}
  \begin{pmatrix}
    \hat{A} & \hat{B}^T \\
    \hat{B} & -C
  \end{pmatrix}
  \begin{pmatrix}
    \hat{u} \\ \hat{w}
  \end{pmatrix}
  =
  \begin{pmatrix}
    \hat{f} \\ \hat{g}
  \end{pmatrix}, \quad \hat{u}, \hat{w} \in \mathbb{R}^n. \label{eqn:red}
\end{align}
Here a solution to \eqref{eqn:red} is a new descent direction in the Uzawa iteration.  By a choice of appropriate step size along this descent direction, global convergence of the Uzawa method is ensured. As the active sets change during each iteration, the linear system, and hence the
preconditioners need to be updated. 

In this paper, our
goal is to design effective preconditioner and hence an iterative solver for
\eqref{eqn:red} such that the convergence rate is independent of problem
parameters and mesh size. There are several classes of preconditioners: multigrid \cite{kumar2014,KUMAR20132251}, domain decomposition \cite{das2019,kumar2011a}, deflation based preconditioners \cite{das2021,katyan2020,niu2010,kumar2014z}. Three preconditioners are considered; two of them
involves Schur complement.  Two of these preconditioners have block diagonal structure and they correspond to 
non-standard norms proposed in \cite{Zulehner2011}. To approximate the Schur
complement, we consider an approximation proposed in \cite{Bosch2014}.  It turns out that
the building blocks of these preconditioners are same, their analysis is
remarkably similar, even though, they may look structurally different from the outset. Eigenvalue
bound and condition number estimates are derived for these preconditioners for
the untruncated problem. The obtained eigenvalue bounds seem to be tight when
compared to numerically computed extreme eigenvalues. Subsequently, it is shown
that the extreme eigenvalues of the preconditioned truncated problem are bounded
from above and below by the extreme eigenvalues of the corresponding
preconditioned untruncated problem. We also verify the effectiveness of these
preconditioners numerically for various evolutions.

The rest of this paper is organized as follows. In Section \ref{sec:ch}, we
describe the Cahn-Hilliard model with obstacle potential, we discuss the time
and space discretizations and variational formulations. In Section
\ref{sec:solver}, we discuss briefly the solver for Cahn-Hilliard with obstacle
problem. The preconditioners for the truncated linear saddle point problem \eqref{eqn:red}, and their eigenvalue analysis are discussed in Section \ref{sec:precon}. Finally, in Section \ref{sec:numexp}, we show
numerical experiments with the proposed preconditioners.
 
\section{\label{sec:not}Notations}
Let SPD and SPSD denote symmetric positive definite and symmetric positive semi
definite respectively. Let $\kappa(M)$ denote the condition number of SPD matrix $M.$ For $x
\in \mathbb{R}, |x|$ denotes the absolute value of $x,$ whereas, for any set
$\mathcal{K},$ $|\mathcal{K}|$ denotes the number of elements in $\mathcal{K}.$
Let $Id \in \mathbb{R}^{n \times n}$ denote the identity matrix. Let
$\mathbf{1}$ denote $[1,1,1,\dots,1].$ For a matrix $Z \in
\mathbb{R}^{n \times n}$ with all real eigenvalues, the eigenvalues will be denoted and ordered as follows
\begin{align}
  \lambda_{1}(Z) \leq \lambda_{2}(Z) \leq \dots \leq \lambda_{n}(Z).
\end{align}

\section{\label{sec:ch} Cahn-Hilliard Problem with Obstacle Potential}
\subsection{The Model}
We will consider a model for phase separation of two components in a binary alloy mixture. Here phase stands for concentration of two components in the mixture. Let $u_1, u_2 \in [0,1]$ be the concentration of two components in
the mixture, then we set $u=u_1 - u_2 \in [-1,1].$ The phase separation is modelled using Cahn-Hilliard equations, which is obtained by $H^{-1}$ gradient flow of 
Ginzburg-Landau (GL) energy functional which is given as follows
\begin{align}
  E(u) = \int_{\Omega} \frac{\epsilon}{2}|\nabla u|^2 +
  \psi(u) \, dx, \quad \Omega=(0,1) \times (0,1). \label{eqn:gle}
\end{align}
 Here the constant
$\epsilon$ relates to interfacial thickness, and the obstacle potential $\psi,$
 which is used to model deep quench phenomena is given as follows
\begin{align}
  \psi(u) = \psi_0(u) + I_{[-1, 1]}(u), \quad \text{where} \quad \psi_0(u) =
  \frac{1}{2}(1-u^2).
\end{align}
Here the subscript $[-1,1]$ of indicator function $I$ above denotes the range of
admissible values of $u.$  Here
$I_{[-1,1]}(u)$ is defined as follows
\begin{align}
  I_{[-1,1]} = \begin{cases} 0, \: \text{if} \: u(i) \in [-1,1] \\
    \infty, \: \text{otherwise}.
  \end{cases}
\end{align}
Moreover, $u_1+u_2$ is assumed to be conserved.  The
$H^{-1}$ gradient flow of $E$ leads to the Cahn-Hilliard equation in PDE form
\begin{align}
  \partial_t u &= \Delta w, \\
  w &= - \epsilon \Delta u + \psi^{'}_{0}(u) + \mu, \\
  \mu &\in \partial I_{[-1,1]}(u), \\
  \frac{\partial u}{\partial n} &= \frac{\partial w}{\partial n} = 0 \:
  \text{on} \: \partial \Omega.
\end{align}
The unknowns $u$ and $w$ are called order parameter and chemical potential
respectively.  For a given $\epsilon > 0,$ final time $T>0,$ and initial
condition $u_0 \in \mathcal{K},$ where
\begin{align} \mathcal{K}=\{ v \in H^1(\Omega) \, : \, |v| \leq
  1\}, \label{eqn:setK} \end{align} the equivalent initial value problem for
Cahn-Hilliard equation with obstacle potential interpreted as variational
inequality reads
\begin{align}
  \left \langle \frac{du}{dt}, v \right \rangle + (\nabla w, \nabla v)
  &= 0, \: \forall v \in H^1(\Omega), \label{eqn:ch31} \\
  \epsilon (\nabla u, \nabla(v-u)) - (u, v-u) &\geq (w,v-u), \: \forall v \in
  \mathcal{K}, \label{eqn:ch32}
\end{align}
where we use the notation $\langle \cdot, \cdot \rangle$ to denote the duality
pairing of $H^1(\Omega)$ and $H^1(\Omega)^{'}.$ Note that we used the fact that
$\psi^{'}_{0}(u) = -u$ in the second term on the left of inequality
\eqref{eqn:ch32} above. The inequalities \eqref{eqn:ch31} and \eqref{eqn:ch32}
are defined on constrained set $\mathcal{K},$ the variational inequality of
first kind is also equivalently represented on unconstrained set using the indicator
functional \cite[p. 2]{Glowinski2008}. The existence and uniqueness of the
solution of \eqref{eqn:ch31} and \eqref{eqn:ch32} above have been established in
Blowey and Elliot \cite{Blowey1991}. We next consider an appropriate
discretization in time and space for \eqref{eqn:ch31} and \eqref{eqn:ch32}.
\subsection{Time and space discretizations}\label{timeAndSpace}
We consider a fixed non-adaptive grid in time interval $(0,T)$ and in space
$\Omega$ defined in \eqref{eqn:gle}. The time step $\tau = T/N$ is kept
uniform. We consider the semi-implicit Euler discretization in time and finite
element discretization in space as in Barrett et. al. \cite{Barrett2004} with
triangulation $\mathcal{T}_h$ with the following spaces 
\begin{align}
  \mathcal{S}_h &= \left\{ v \in C(\bar{\Omega}) \, : v|_T ~\mbox{is
      linear} \quad \forall T \in T_h \right\}, \\
  \mathcal{P}_h &= \left\{ v \in L^2(\Omega) \, : \, v|_T \,
    \mbox{is constant} \quad \forall T \in \mathcal{T} \in \mathcal{T}_h \right\}, \\
  \mathcal{K}_h &= \left\{ v \in \mathcal{P}_h \, : \, \left|\, v|_T \, \right|
    \leq 1 \quad \forall T \in \mathcal{T}_h \right\} = \mathcal{K} \cap
  \mathcal{S}_h \subset \mathcal{K},
\end{align}
which leads to the following discrete Cahn-Hilliard problem with obstacle
potential: \\\\
Find $u^k_h \in \mathcal{K}_h, w^k_h \in \mathcal{S}_h$ s.t.
\begin{align}
  \langle u^k_h, v_h \rangle + \tau (\nabla w^k_h, \nabla v_h)
  &= \langle u^{k-1}_h, v_h \rangle, \: \forall v_h \in \mathcal{S}_h, \label{eqn:dche1} \\
  \epsilon (\nabla u^k_h, \nabla (v_h - u^k_h)) - \langle w^k_h, v_h - u^k_h
  \rangle &\geq \langle u^{k-1}_h, v_h - u^k_h \rangle, \: \forall v_h \in
  \mathcal{K}_h \label{eqn:dche2}
\end{align}
holds for each $k=1,\dots,N.$ The initial solution $u^0_h \in \mathcal{K}_h$ is
taken to be the discrete $L^2$ projection $\langle u^0_h, v_h \rangle = (u_0,
v_h), \forall v_h \in \mathcal{S}_h.$

Existence and uniqueness of the discrete Cahn-Hilliard equations has been
established in \cite{Blowey1992}. The discrete Cahn-Hilliard equation is
equivalent to the set valued saddle point block $2 \times 2$ nonlinear system
\eqref{eqn:saddle} with $F = A + \partial I_{\mathcal{K}_h}$ and
\begin{align}
  A = \epsilon (\langle \lambda_p, 1 \rangle \langle \lambda_p, 1 \rangle
  + (\nabla \lambda_p, \nabla \lambda_q))_{p,q \in \mathcal{N}_h}, \\
  B = (\langle \lambda_p, \lambda_q \rangle)_{p,q \in \mathcal{N}_h}, \: C =
  \tau ((\nabla \lambda_p, \nabla \lambda_q))_{p,q \in
    \mathcal{N}_h}. \label{eqn:stiff}
\end{align}
We write the above in more compact notations as follows
\begin{align}
  A = \epsilon(K+mm^T), \quad B = M, \quad C = \tau K, \label{eqn:short_not}
\end{align}
where $m = \langle \lambda_p, 1 \rangle,$ $M$ and $K$ are usual notations for
mass and stiffness matrices respectively.
\section{\label{sec:solver}Iterative solver for Cahn-Hilliard with obstacle
  potential}
In \cite{Graeser2009}, a nonsmooth Newton Schur method is proposed which is also
interpreted as a preconditioned Uzawa iteration. For a given time step $k,$ the
Uzawa iteration reads:
\begin{align}
  u^{i,k} &= F^{-1} (f^k - B^T w^{i,k}), \label{eqn:uzawa1} \\
  w^{i+1,k} &= w^{i,k} + \rho^{i,k} \hat{S}^{-1}_{i,k}(Bu^{i,k} - Cw^{i,k} -
  g^k) \label{eqn:uzawa2}
\end{align}
for the saddle point problem \eqref{eqn:saddle}. Here $i$ denotes the $i^{th}$
Uzawa step, and $k$ denotes the $k^{th}$ time step. Here $f^k$ and $g^k$ are
defined as follows
\begin{align}
  \langle f, v_h\rangle = \langle u_h^{k-1}, v_h \rangle, \quad \langle g, v_h
  \rangle = - \langle u_h^{k-1}, v_h \rangle.
\end{align}
The time loop starts with an initial value for $u^{0,0}$ which can be taken
arbitrary as the method is globally convergent, and with the initial value
$w^{0,0}$ obtained from \eqref{eqn:uzawa2}. The Uzawa iteration requires three main computations that we describe
below.
\subsection{Computing $u^{i,k}$}
The first step \eqref{eqn:uzawa1} corresponds to solving a quadratic obstacle
problem interpreted as a minimization problem as follows
\begin{align}
  u^{i,k} = \mbox{arg}~\underset{v \in K}{\mbox{min}} \left(\frac{1}{2}\langle
    Av, v \rangle - \langle f^k - B^T w^{i,k}, v \rangle \right).
\end{align}
As mentioned in the introduction, this problem has been extensively studied
during last decades \cite{Barrett2004, Graeser2009b, Kornhuber1994,
  Kornhuber1996}.
\subsubsection{Algebraic Monotone Multigrid for Obstacle Problem}
To solve the quadratic obstacle problem \eqref{eqn:uzawa1}, we use the monotone
multigrid method proposed in \cite{Kornhuber1994}. In Algorithm \ref{alg:mmg}, we
describe an algebraic variant of the method. The algorithm performs one V-cycle
of multigrid; it takes $u^i$ from the previous iteration, and outputs the
improved solution $u^{i+1}.$ The initial set of interpolation operators are
constructed using aggregation based coarsening \cite{Kumar2014}.
\subsection{Computing $\hat{S}^{-1}_{i,k}(Bu^{i,k} - Cw^{i,k} - g^k)$}
The quantity $d^{i,k} = \hat{S}^{-1}_{i,k} (Bu^{i,k} - Cw^{i,k} - g^k)$ in
\eqref{eqn:uzawa2} is obtained as a solution of the following reduced linear
block $2 \times 2$ system:
\begin{align}
  \begin{pmatrix}
    \hat{A} & \hat{B}^T \\
    \hat{B} & -C
  \end{pmatrix}
  \begin{pmatrix}
    \tilde{u}^{i,k} \\ d^{i,k}
  \end{pmatrix} =
  \begin{pmatrix}
    0 \\ g + Cw^{i,k} - Bu^{i,k} \label{eqn:rls}
  \end{pmatrix},
\end{align}
where
\begin{align}
  \hat{A} = TAT + \hat{T}, \quad \hat{B} =
  BT. \label{eqn:trunc_mass_stiff}
\end{align}
Here truncation matrices $T$ and $\hat{T}$ are defined as follows:
\begin{align} \label{eqn:TandThat} T = \mbox{diag}
  \begin{pmatrix*}[l]
    0, \quad \mbox{if}~u^{i,k}(j) \in \{-1, 1\} \\
    1, \quad \mbox{otherwise}
  \end{pmatrix*}, \quad \hat{T} = \mbox{diag}
  \begin{pmatrix*}[l]
    1, \quad \mbox{if}~T_{jj}=0 \\
    0, \quad \mbox{otherwise}
  \end{pmatrix*}, \quad j=1,\dots,|N_h|,
\end{align}
where $u^{i,k}(j)$ is the $j$th component of $u^{i,k},$ and $T_{jj}$ is the
$j$th diagonal entry of $T.$ In words, $\hat{A}$ is the matrix obtained from
$A$ by replacing the $i$th row and $i$th column by the unit vector $e_i$
corresponding to the active sets identified by diagonal entries of $T.$
Similarly, $\hat{B}$ is the matrix obtained from $B$ by annihilating rows,
and $\hat{B}^T$ is the matrix obtained from $B$ by annihilating columns.
Rewriting untruncated version of \eqref{eqn:rls} in simpler notation as follows
\begin{align}
  \left( \begin{array}{cc}
      \epsilon \bar{K} & M                              \\
      M & -\tau K
    \end{array} \right) \left( \begin{array}{c}
      x                              \\ y
    \end{array} \right) = \left( \begin{array}{c}
      0 \\ b
    \end{array} 
  \right),
\end{align}
where $\bar{K} = K + mm^T.$ By a change of variable $y' = y/\epsilon,$ we obtain
\begin{align}
  \left( \begin{array}{cc}
      \bar{K} & M                                       \\
      M & -\eta K
    \end{array}\right) \left( \begin{array}{c}
      x                              \\ y'
    \end{array} \right) = \left( \begin{array}{c}
      0 \\ b
    \end{array} 
  \right),
\end{align}
where $\eta = \epsilon \cdot \tau.$
Furthermore, we modify the $(2,2)$ term of the system matrix above as follows
\begin{align}
  - \eta K = - \eta K - \eta mm^T + \eta mm^T = - \eta \bar{K} + (\eta^{1/2}m)
  (\eta^{1/2}m^T) = - \eta \bar{K} + \tilde{m} \tilde{m}^T,
\end{align}
where $\tilde{m} = \eta^{1/2}m.$
Now the untruncated system may be rewritten as
\begin{align}
  \bar{\mathcal{A}} = \begin{pmatrix}
    \bar{K} & M \\
    M & -\eta \bar{K}
  \end{pmatrix} + \bar{m} \bar{m}^T =: \mathcal{A} +
  \bar{m}\bar{m}^T, \label{eqn:simplified}
\end{align}
where $\bar{m}^T = [0, \tilde{m}^T] \in \mathbb{R}^{2|\mathcal{N}_h|}$ is a rank one term with proper extension by
zero. Now we are in a position to use Sherman-Woodbury inversion for matrix plus
rank-one term to invert $\bar{\mathcal{A}}$.  In this paper, we shall develop efficient solvers to solve the
truncated system
\begin{align}
  \hat{\mathcal{A}}v = z, \label{eqn:trunc}
\end{align}
where $\hat{\mathcal{A}}$ is defined next.  We denote
\begin{align}
  \label{eqn:not}
  \hat{K} = TKT + \hat{T}, \quad A = \bar{K}, \quad B = M, \quad
  \hat{A} = TAT + \hat{T}, \quad \hat{B} = TB.
\end{align}
Note that the notation $A$ appearing above has now been redefined.
Thus, the truncated system corresponding to \eqref{eqn:simplified} reads
\begin{align}
  \hat{\mathcal{A}} + \bar{m}\bar{m}^T = \begin{pmatrix}
    \hat{A} & \hat{B} \\
    \hat{B}^T & -\eta A
  \end{pmatrix} + \bar{m} \bar{m}^T.  \label{eqn:trun_simplified}
\end{align}
Thus, Sherman-Woodbury inversion formula may be used to invert $\hat{\mathcal{A}} +
\hat{m}\hat{m}^T,$ and it is enough to find an efficient solver for
\eqref{eqn:trunc}.
\subsection{Computing step length $\rho^{i,k}$}
The step length $\rho^{i,k}$ can be computed using a bisection method. We refer the
interested reader to \cite{Graeser2011}[p. 88].
\begin{algorithm}
  \begin{algorithmic}[1]
    \REQUIRE Let $V_1 \subset V_1 \subset V_2 \cdots V_m$ and let $r_m, b_m \in
    V_m,$ \REQUIRE $u^i, i>0$ solution from previous cycle or $u^0$ a given initial
    solution \STATE Compute residual: \: $r_m = b_m - A_m u^i$ \STATE Compute
    defect obstacles: \begin{align} \begin{cases}
        \underline{\delta}_m = \underline{\psi} - u^i \\
        \bar{\delta}_m = \bar{\psi} - u^i
      \end{cases} \end{align} \FOR{$\ell=m,\cdots,2$} \STATE Projected
    Gauss-Seidel Solve using Algorithm \ref{alg:pgs}:
    \begin{align}
      (D_\ell + L_\ell + \partial I_{\mathcal{K}^\ell})v_\ell = r_\ell,
    \end{align}
    where
    \begin{align}
      \mathcal{K}^\ell = \left\{\, v \in \mathbb{R}^{n_\ell} \, \middle| \:
        \underline{\delta}_{\ell} \leq v \leq \bar{\delta}_{\ell} \,
      \right\}.
    \end{align}
    \STATE Update
    \begin{align}
      \begin{cases} r_{\ell} &:= r_{\ell} - A_{\ell} v_{\ell} \\
        \underline{\delta}_{\ell - 1} &:= \underline{\delta}_{\ell} - v_{\ell} \\
        \bar{\delta}_{\ell - 1} &:= \bar{\delta}_{\ell} - v_{\ell}
      \end{cases}
    \end{align}
    \STATE Restrict and compute new obstacle
    \begin{align}
      \begin{cases}
        r_{\ell-1} = P^T_{\ell-1}r_{\ell}\\
        (\underline{\delta}_{\ell-1})_i := \max \left\{
          (\underline{\delta}_{\ell-1})_j \: \middle| \:
          (P_{\ell-1})_{ji} \neq 0 \right\}, \: i=1,\dots,n_{\ell} -1 \\
        (\bar{\delta}_{\ell-1})_i := \min \left\{
          (\bar{\delta}_{\ell-1})_j \: \middle| \: (P_{\ell-1})_{ji} \neq 0
        \right\}, \: i=1,\dots,n_{\ell} -1
      \end{cases}
    \end{align}
    \ENDFOR
    \STATE Solve
    \begin{align}
      (D_1 + L_1 + \partial I_{\mathcal{K}^1})v_1 = r_1
    \end{align}
    \FOR{$\ell = 2, \cdots, m$} \STATE Add corrections
    \begin{align}
      v_{\ell} := v_{\ell} + P_{\ell-1}v_{\ell-1}
    \end{align}
    \ENDFOR
    \STATE Compute
    \begin{align}
      u^{i+1} = u^i + v_m
    \end{align}
    \ENSURE improved solution $u^{i+1}$
  \end{algorithmic}
  \caption{\label{alg:mmg}Monotone Multigrid (MMG) V cycle}
\end{algorithm}
\begin{algorithm}
  \begin{algorithmic}[1]
    \REQUIRE $A \in \mathbb{R}^{n_{\ell} \times n_{\ell}}, \quad b,
    \underline{\psi}, \bar{\psi} \in \mathbb{R}^{n_{\ell}},$ \quad current
    iterate $x^i \in \mathbb{R}^{n_{\ell}}$ \ENSURE new iterate $x^{i+1} \in
    \mathbb{R}^{n_{\ell}}$ \STATE Compute residual: \[r := b - Ax^i \] \STATE
    Compute defect obstacles: \[\underline{\psi} := \underline{\psi} - x^i \] \[
    \bar{\psi} := \bar{\psi} - x^i \] \FOR{$i=1:n_{\ell}$}
    \FOR{$j=1:i$} \STATE Compute $y_i$
    \begin{align}
      y_i = \begin{cases}
        \max \left(\min \left((r_i - A_{ij}y_j)/A_{ii}, \: \bar{\psi}_i \right), \: \underline{\psi}_i \right), \quad \mbox{if} \: A_{ii} \neq 0, \\
        0, \quad \mbox{otherwise}
      \end{cases}
    \end{align}
    \ENDFOR
    \ENDFOR
    \STATE $x^{i+1} = x^i+y$
  \end{algorithmic}
  \caption{\label{alg:pgs} $x^{i+1}$ $\leftarrow$ PGS($x^i, A, \underline{\psi},
    \bar{\psi}, b$)}
\end{algorithm}
\subsection{\label{sec:mixedfem} Mixed Finite Element Formulation of Reduced
  Linear System}
We choose suitable Hilbert spaces for trial
and test spaces as follows
\begin{align}
  \hat{V} &= \{v \in H^1(\Omega) : v|_{\Omega_A}=0 \}, \quad Q =
  H^1_0(\Omega), \quad \Omega_A = \Omega \setminus \Omega_I, \label{eqn:V}
\end{align}
where $\Omega_A = \Omega \setminus \Omega_I$ is the domain where truncation takes place. Indeed, if $\Omega_A$ is empty,
then $\Omega_I = \Omega,$ and we set $\hat{V} = V = H^1(\Omega).$ The weak
form of the partial differential equations corresponding to the
truncated system \eqref{eqn:trunc} reads
\begin{align}
  \mbox{Find}~ (u, \lambda) \in \hat{V} \times H^1(\Omega): \\
  \hat{a}(u, v) + \hat{b}(v, \lambda) &= f(v) \quad \mbox{for all} \: v \in \hat{V}, \label{eqn:weak1}\\
  \hat{b}(u, q) - c(\lambda, q) &= g(q) \quad \mbox{for all} \: q \in
  Q, \label{eqn:weak2}
\end{align}
where
\begin{equation}
  \begin{aligned} \label{eqn:abc}
    \hat{a}(u,v) &= \left( (\nabla u, \nabla v) + \int_{\Omega} u \int_{\Omega} v \right)
    = \left((\nabla u, \nabla v) + \langle u, \mathbf{1} \rangle \langle v, \mathbf{1}\rangle \right) , \\ 
    c(\lambda,q) &= \eta \left( \left(\nabla \lambda, \nabla q \right) + 
      \int_{\Omega} u \int_{\Omega} v \right), \quad \hat{b}(v,\lambda) = (v, \lambda).
  \end{aligned}
\end{equation}
The mixed variational problem above can also be written as a variational form on
product spaces
\begin{align}
  \mbox{Find}~ x \in \hat{V} \times Q: \quad \hat{\mathcal{B}} (x,y) =
  \hat{\mathcal{F}}(y) \quad \forall y \in \hat{V} \times
  H^1(\Omega), \label{eqn:mixed_bilin}
\end{align}
where $\hat{\mathcal{B}}$ and $\hat{\mathcal{F}}$ are defined as follows
\begin{align}
  \hat{\mathcal{B}} (z,y) = \hat{a}(w,v) + \hat{b}(v,r) +
  \hat{b}(w,q) - c(r,q), \quad \hat{\mathcal{F}}(y) = f(v) + g(q)
\end{align}
for $y=(v,q) \in \hat{V} \times Q$ and $z = (w,r) \in \hat{V} \times Q.$
The corresponding bilinear form for the untruncated system is given as follows
\begin{align}
  \mathcal{B} (z,y) = a(w,v) + b(v,r) + b(w,q) - c(r,q), \quad \mathcal{F}(y) =
  f(v) + g(q)
\end{align}
for $y=(v,q) \in V \times Q$ and $z = (w,r) \in V \times Q,$ where $V =
H^1(\Omega).$ 
The mixed variational problem corresponding to untruncated system now reads
\begin{align}
  \mbox{Find}~ x \in V \times Q: \quad \mathcal{B} (x,y) =
  \mathcal{F}(y) \quad \forall y \in V \times
  H^1(\Omega). \label{eqn:mixed_bilin2}
\end{align}
In the rest of this paper, we shall consider norms proposed in
\cite{Zulehner2011} as follows
\begin{align}
  ((v,q), (w,r))_{\hat{X}} = (v,w)_{\hat{V}} +
  (q,r)_Q, \label{eqn:normX}
\end{align}
where $(\cdot, \cdot)_{\hat{V}}$ and $(\cdot, \cdot)_Q$ are inner products
of Hilbert spaces $\hat{V}$ and $Q,$ respectively.  As will see shortly such
norms lead to block diagonal preconditioners.  The boundedness condition that
we seek for the mixed problem for the untruncated problem reads
\begin{align}
  \sup_{0 \neq z \in X} \sup_{0 \neq y \in X} \frac{\mathcal{B}(z,y)}{\|z\|_X
    \|y\|_X} \leq \bar{c}_x < \infty. \label{eqn:bound}
\end{align}
We have the following conjecture for the truncated problem
\begin{align}
  \sup_{0 \neq z \in \hat{X}} \sup_{0 \neq y \in \hat{X}}
  \frac{\hat{\mathcal{B}}(z,y)}{\|z\|_{\hat{X}} \|y\|_{\hat{X}}}
  \leq \sup_{0 \neq z \in X} \sup_{0 \neq y \in X}
  \frac{\mathcal{B}(z,y)}{\|z\|_X \|y\|_X} \leq \bar{c}_x <
  \infty. \label{eqn:DoesThisHold1}
\end{align}
Similarly, for well-posedness of \eqref{eqn:mixed_bilin2}, following well known Babuska-Brezzi condition
needs to be satisfied
\begin{align}
  \inf_{0 \neq z \in X} \sup_{0 \neq y \in X}
  \frac{\mathcal{B}(z,y)}{\|z\|_{X} \|y\|_{X}}
  \geq \underbar{c}_x > 0. \label{eqn:bb}
\end{align}
Similarly, it is not evident whether the following inequality must hold.
\begin{align}
  \underline{c}_x \leq \inf_{0 \neq z \in X} \sup_{0 \neq y \in X}
  \frac{\mathcal{B}(z,y)}{\|z\|_X \|y\|_X} \leq \inf_{0 \neq z \in \hat{X}}
  \sup_{0 \neq y \in \hat{X}}
  \frac{\mathcal{\hat{B}}(z,y)}{\|z\|_{\hat{X}}
    \|y\|_{\hat{X}}}. \label{eqn:DoesThisHold2}
\end{align}
We shall call the norms $\| \cdot \|_X$ and $\| \cdot \|_{\hat{X}}$ optimal
if the constants $\bar{c}_x$ and $\underline{c}_x$ remain independent of the
problem parameters: $\tau$ and $\epsilon$, moreover, in the
discrete space, also remains independent of the mesh size $h.$ The reason why we
are interested in the inequalities \eqref{eqn:DoesThisHold1} and
\eqref{eqn:DoesThisHold2} is that any optimal norm that is found for untruncated
problem shall lead to optimal norm for truncated problem as well. Note that
boundary of untruncated problem has certian regularity (for example Lipschitz
continuity), but for the truncated problem no such regularity is to be assumed, because
the truncations are assumed to be arbitrary. Our plan of attack is to use the
approach of \cite{Zulehner2011}, which is readily applicable for our untruncated
problem.  Although, \eqref{eqn:DoesThisHold1} and \eqref{eqn:DoesThisHold2} are left
as conjecture for the moment, we shall try to answer this in
the discrete case: we shall show a related result that the extreme eigenvalues
of the truncated preconditioned operator are bounded by the extreme
eigenvalues of the corresponding untruncated preconditioned operator. Hence, in
the following, we first derive optimal
preconditioners for the untruncated problem.

We shall provide equivalent conditions as in \cite{Zulehner2011} for
\eqref{eqn:bound} and \eqref{eqn:bb} that lead to deriving the optimal norms,
i.e., optimal preconditioners.  But first we introduce some notations for
operators corresponding to bilinear forms.  It is easy to see that $V \times Q$
is a Hilbert space itself as $V$ and $H^1(\Omega)$ are themselves Hilbert
spaces. It is convenient to associate linear operators for the bilinear forms
$a,b,$ and $c$ as follows
\begin{equation}
\begin{aligned} \label{eqn:ops}
 \langle Aw, v \rangle &= a(w,v), \quad A \in L(V, V^*), \\
 \langle Bw, q \rangle &= b(w,q), \quad B \in L(V, Q^*), \\
 \langle Cr, q \rangle &= c(r,q), \quad C \in L(Q, Q^*), \\
 \langle B^*r, v \rangle &= \langle Bv, r \rangle, \quad B^* \in L(Q, V^*).
\end{aligned}
\end{equation}
Consequently, the operator corresponding to mixed bilinear form $\mathcal{A},$
and the right hand side $\mathcal{F}$ (reusing the notation) in operator notation are given as
follows
\begin{align}
 \mathcal{A} = \begin{pmatrix}
                A & B^* \\
		B & -C  
               \end{pmatrix}, \quad \mathcal{F} = 
               \begin{pmatrix}
                f \\ g
               \end{pmatrix}, \quad x = 
               \begin{pmatrix}
                u \\ p
               \end{pmatrix}.               
\end{align}
The untruncated problem is denoted as follows
\begin{align}
 \mathcal{A}x = \mathcal{F}, \label{eqn:AxF2}
\end{align}
and the corresponding truncated problem reads
\begin{align}
 \hat{\mathcal{A}}x = \hat{\mathcal{F}}, \label{eqn:AxF}
\end{align}
where $\hat{\mathcal{A}}$ is given as follows
\begin{align}
\hat{\mathcal{A}} = \begin{pmatrix}
                \hat{A} & \hat{B}^* \\
		\hat{B} & -C  
               \end{pmatrix}, \quad \hat{\mathcal{F}} = 
               \begin{pmatrix}
                \hat{f} \\ g
               \end{pmatrix}, \quad \hat{x} = 
               \begin{pmatrix}
                \hat{u} \\ p
               \end{pmatrix},               
\end{align}  
where analogous to \eqref{eqn:ops}, we have following definitions for truncated operators
\begin{equation}
\begin{aligned} \label{eqn:ops_untrunc}
 \langle \hat{A}w, v \rangle &= \hat{a}(w,v), \quad \quad \hat{A} \in L(\hat{V}, \hat{V}^*), \\
 \langle \hat{B}w, q \rangle &= \hat{b}(w,q), \quad \quad \hat{B} \in L(\hat{V}, Q^*), \\
 \langle \hat{B}^*r, v \rangle &= \langle \hat{B}v, r \rangle, \quad \quad \hat{B}^* \in L(Q, \hat{V}^*),
\end{aligned}
\end{equation}
where $C$ is defined as in \eqref{eqn:ops}.
In \cite{Zulehner2011}, starting from the abstract theory on Hilbert spaces that
lead to representation of isometries, a preconditioner is proposed; it is based
on non-standard norms, or isometries that correspond to block diagonal
preconditioner of the following form
\begin{align}
  \mathcal{B} =
  \begin{pmatrix}
    \mathcal{I}_V & \\
    & \mathcal{I}_Q
  \end{pmatrix}.
\end{align}
In the next section, our goal is to determine $\mathcal{I}_Q$ and $\mathcal{I}_V.$
\subsection{\label{sec:norms} Choice of norm: a brief introduction to Zulehner's idea}
Before we move further, we introduce some notations.  The duality pairing
$\langle \cdot, \cdot \rangle_H$ on $H^* \times H$ is defined as follows
\begin{align*}
  \langle \ell, x \rangle_H = \ell(x) \quad \mbox{for all}~\ell \in H^*, \: x
  \in H.
\end{align*}
Let $\mathcal{I}_H:H \rightarrow H^*$ be an isometric isomorphism defined as
follows
\begin{align*}
  \langle \mathcal{I}_Hx, y \rangle = (x,y)_H.
\end{align*}
The inverse $\mathcal{R}_H = \mathcal{I}^{-1}_H$ is Riesz-isomorphism, by which
functionals in $H^*$ can be identified with elements in $H$ and we have
\begin{align*}
  \langle \ell, x \rangle_H = (\mathcal{R}_H\ell, x)_H.
\end{align*}
We already chose the type of norm in \eqref{eqn:normX}, we now look for explicit representation of
isometries or norms in terms of operators defined in \eqref{eqn:ops}. The main ingredient is the following theorem.
\begin{theorem}\cite{Zulehner2011}[p. 543, Th. 2.6]
  If there are constants $\underline{\gamma}_v, \bar{\gamma}_v,
  \underline{\gamma}_q, \bar{\gamma}_q > 0$ such that
 \begin{align}
   \underline{\gamma}_v \|w\|^2_V \leq a(w,w) + \| Bw \|^2_{Q^*} \leq
   \bar{\gamma}_v \| w\|^2_V, \quad \forall w \in V \label{eqn:infsup1}
 \end{align}
 and
 \begin{align}
   \underline{\gamma}_q \| r \|^2_Q \leq c(r,r) + \| B^*r \|^2_{V^*} \leq
   \bar{\gamma}_q \| r \|^2_Q, \quad \forall r \in Q \label{eqn:infsup2}
 \end{align}
then  
\begin{align} 
  \underline{c}_x \| z \|_X \leq \| \mathcal{A} z \|_{X^*} \leq \bar{c}_x
  \| z \|_X, \quad \forall z \in X \label{eqn:infsupA}
\end{align}
is satisfied with constants $\underline{c}_x, \bar{c}_x >0$ that depend
only on $\underline{\gamma}_v, \bar{\gamma}_v, \underline{\gamma}_q,$ and
on $
\bar{\gamma}_q.$ And, vice versa, if the estimates \eqref{eqn:infsupA} are
satisfied with constants $\underline{c}_x, \bar{c}_x > 0,$ then the
estimates \eqref{eqn:infsup1} and \eqref{eqn:infsup2} are satisfied.
\end{theorem}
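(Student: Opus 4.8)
The plan is to establish the two implications separately. Write $z=(w,r)\in X=V\times Q$ and set $f:=Aw+B^{*}r\in V^{*}$ and $g:=Bw-Cr\in Q^{*}$, so that $\|\mathcal{A}z\|_{X^{*}}^{2}=\|f\|_{V^{*}}^{2}+\|g\|_{Q^{*}}^{2}$ and $\|z\|_{X}^{2}=\|w\|_{V}^{2}+\|r\|_{Q}^{2}$. Besides the sign pattern of the $(2,2)$ block, the only structural facts used are that $a$ and $c$ are symmetric and positive semidefinite, so that the Cauchy--Schwarz estimates $\langle Aw,v\rangle\le a(w,w)^{1/2}a(v,v)^{1/2}$ and $\langle Cr,q\rangle\le c(r,r)^{1/2}c(q,q)^{1/2}$ are available; these hold in the present setting.

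The forward upper bound in \eqref{eqn:infsupA} is routine: the upper halves of \eqref{eqn:infsup1}--\eqref{eqn:infsup2} give $\|Bw\|_{Q^{*}}^{2}\le\bar\gamma_{v}\|w\|_{V}^{2}$, $\|B^{*}r\|_{V^{*}}^{2}\le\bar\gamma_{q}\|r\|_{Q}^{2}$, and, via Cauchy--Schwarz for the semi-inner products, $\|Aw\|_{V^{*}}^{2}\le\bar\gamma_{v}\,a(w,w)\le\bar\gamma_{v}^{2}\|w\|_{V}^{2}$ and $\|Cr\|_{Q^{*}}^{2}\le\bar\gamma_{q}^{2}\|r\|_{Q}^{2}$. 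Expanding $\|f\|_{V^{*}}^{2}+\|g\|_{Q^{*}}^{2}$ and using $(p+q)^{2}\le 2p^{2}+2q^{2}$ yields \eqref{eqn:infsupA} with $\bar c_{x}^{2}=2\max\{\bar\gamma_{v}+\bar\gamma_{v}^{2},\ \bar\gamma_{q}+\bar\gamma_{q}^{2}\}$.

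The core of the argument is the forward lower bound. Writing out the duality pairings and cancelling the two copies of $b(w,r)$ produces the identity $\langle f,w\rangle-\langle g,r\rangle=a(w,w)+c(r,r)$, hence $a(w,w)+c(r,r)\le\|\mathcal{A}z\|_{X^{*}}\|z\|_{X}$. The back-substitutions $Bw=g+Cr$ and $B^{*}r=f-Aw$, combined with the boundedness bounds above, give
\[
\|Bw\|_{Q^{*}}^{2}+\|B^{*}r\|_{V^{*}}^{2}\;\le\;2\|\mathcal{A}z\|_{X^{*}}^{2}+2\max\{\bar\gamma_{v},\bar\gamma_{q}\}\bigl(a(w,w)+c(r,r)\bigr).
\]
Adding the lower halves of \eqref{eqn:infsup1} and \eqref{eqn:infsup2}, inserting the two preceding displays, and using $\|w\|_{V}^{2}+\|r\|_{Q}^{2}=\|z\|_{X}^{2}$ give, with $t:=\|\mathcal{A}z\|_{X^{*}}/\|z\|_{X}$,
\[
\min\{\underline\gamma_{v},\underline\gamma_{q}\}\;\le\;\bigl(1+2\max\{\bar\gamma_{v},\bar\gamma_{q}\}\bigr)t+2t^{2}.
\]
A quadratic $2t^{2}+\beta t-\alpha$ with $\alpha,\beta>0$ is negative on the interval $(0,s_{*})$ below its positive root $s_{*}=\tfrac14\bigl(-\beta+\sqrt{\beta^{2}+8\alpha}\bigr)$, so $t\ge s_{*}$ and \eqref{eqn:infsupA} holds with $\underline c_{x}=s_{*}>0$, depending only on the four constants. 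The hard part will be precisely this lower bound: verifying the identity $\langle f,w\rangle-\langle g,r\rangle=a(w,w)+c(r,r)$ and carrying out the bookkeeping that assembles the added inf--sup estimates into the scalar quadratic inequality; the remaining manipulations are Cauchy--Schwarz and the triangle inequality.

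For the converse, one simply tests \eqref{eqn:infsupA} on the slices $z=(w,0)$ and $z=(0,r)$. With $z=(w,0)$ we have $\mathcal{A}z=(Aw,Bw)$, so $\|Aw\|_{V^{*}}^{2}+\|Bw\|_{Q^{*}}^{2}$ lies between $\underline c_{x}^{2}\|w\|_{V}^{2}$ and $\bar c_{x}^{2}\|w\|_{V}^{2}$; the upper bound together with $a(w,w)=\langle Aw,w\rangle\le\|Aw\|_{V^{*}}\|w\|_{V}\le\bar c_{x}\|w\|_{V}^{2}$ yields the upper half of \eqref{eqn:infsup1}, and the lower bound together with $\|Aw\|_{V^{*}}^{2}\le\|A\|\,a(w,w)\le\bar c_{x}\,a(w,w)$ (using $A=A^{*}\succeq 0$) yields the lower half. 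Taking $z=(0,r)$ gives \eqref{eqn:infsup2} in the same way, now with $C=C^{*}\succeq 0$. This direction is entirely routine once symmetry and semidefiniteness of $a$ and $c$ are used.
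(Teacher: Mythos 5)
The paper does not prove this theorem; it is quoted verbatim from Zulehner~\cite{Zulehner2011}, Theorem~2.6, so there is no in-paper argument for me to compare yours against. Your blind proof is nevertheless correct and self-contained under the standing assumption (implicit in this paper's setting and in Zulehner's) that $a$ and $c$ are symmetric positive semidefinite bilinear forms. The decisive observation is the cancellation identity $\langle f,w\rangle-\langle g,r\rangle=a(w,w)+c(r,r)$ for $f=Aw+B^{*}r$, $g=Bw-Cr$, which exploits the sign pattern of the saddle-point block to control $a(w,w)+c(r,r)$ by $\|\mathcal{A}z\|_{X^{*}}\|z\|_{X}$; the back-substitution $Bw=g+Cr$, $B^{*}r=f-Aw$ then turns the summed lower inf--sup estimates into the scalar quadratic inequality $\min\{\underline\gamma_v,\underline\gamma_q\}\le\bigl(1+2\max\{\bar\gamma_v,\bar\gamma_q\}\bigr)t+2t^{2}$ in $t=\|\mathcal{A}z\|_{X^{*}}/\|z\|_{X}$, whose positive root supplies $\underline c_x$. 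The boundedness direction and the converse (testing on the slices $(w,0)$ and $(0,r)$ and using Cauchy--Schwarz for the semi-inner products $a$ and $c$) are routine and correctly executed; the only cosmetic looseness is the informal use of ``$\|A\|$'' in the converse, which you have in fact bounded by $\bar c_x$ via the same Cauchy--Schwarz argument on the slice. So the proof establishes exactly the cited equivalence with explicit constants depending only on $\underline\gamma_v,\bar\gamma_v,\underline\gamma_q,\bar\gamma_q$.
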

Equivalently, as conjectured for \eqref{eqn:DoesThisHold1} and
\eqref{eqn:DoesThisHold2}, and recalling that $\hat{X} = \hat{V} \times Q
\subset V \times Q = X,$ we may ask whether the following bounds hold for
truncated system
\begin{align}
  \inf_{\hat{z} \in \hat{X}^*} \| \hat{\mathcal{A}}\hat{z}
  \|_{\hat{X}^*} \geq \inf_{z \in X^*} \| \mathcal{A}z \|_{X^*}, \quad \sup_{\hat{z} \in
    \hat{X}^*} \| \hat{\mathcal{A}}\hat{z}
   \|_{\hat{X}^*}
  \leq \sup_{z \in X^*} \| \mathcal{A}z \|_{X^*}.
\end{align}
However, we shall show a similar result in finite dimension
 using Fischer's theorem in Lemma \ref{lem:eigvals_truncated}.  In
\cite{Zulehner2011}, the terms $\| Bw \|^2_{Q^*}$ and $\| B^*r \|^2_{V^*}$ in
\eqref{eqn:infsup1} and \eqref{eqn:infsup2} respectively are defined using isometries $\mathcal{I}_V$ and $\mathcal{I}_Q$ as follows:
  \begin{align} \label{eqn:isom}
    \|B w\|^2_{Q^*} = \langle B^*\mathcal{I}^{-1}_QBw,w \rangle, \quad
    \|B^*r\|^2_{V^*} = \langle B\mathcal{I}^{-1}_VB^*r, r \rangle.
  \end{align}
Using \eqref{eqn:isom}, the equations \eqref{eqn:infsup1} and \eqref{eqn:infsup2} are equivalently written as
  follows
\begin{align*}
  \underline{\gamma}_v \langle \mathcal{I}_V w, w \rangle & \leq \langle (A + B^*
  \mathcal{I}^{-1}_Q B)w, w \rangle \leq
  \bar{\gamma}_v \langle \mathcal{I}_V w, w \rangle, \quad \mbox{for all}~w \in V, \\
  \underline{\gamma}_q \langle \mathcal{I}_Q r, r \rangle & \leq \langle (C + B
  \mathcal{I}^{-1}_V B^*)r, r \rangle \leq \bar{\gamma}_q \langle \mathcal{I}_Q
  r, r \rangle, \quad \mbox{for all}~r \in Q.
\end{align*}
In short, in new notation $\sim$ meaning ``spectrally similar'', we obtain the
following equivalent conditions for isometries
\begin{align*}
  \mathcal{I}_V & \sim A + B^*\mathcal{I}^{-1}_Q B \quad \mbox{and} \quad
  \mathcal{I}_Q \sim C + B \mathcal{I}^{-1}_V B^* \\
  \iff \mathcal{I}_V & \sim A + B^* (C + B \mathcal{I}^{-1}_V B^*)^{-1} B \quad
  \mbox{and} \quad \mathcal{I}_Q \sim C + B
  \mathcal{I}^{-1}_V B^*\\
  \iff \mathcal{I}_Q & \sim C + B (A + B^* \mathcal{I}^{-1}_Q B)^{-1} B \quad
  \mbox{and} \quad \mathcal{I}_V \sim A + B^* \mathcal{I}^{-1}_Q B
\end{align*}
Let $M$ and $N$ be any SPD matrices, consequently, they define inner products
and a Hilbert space structure in $\mathbb{R}^n.$ Moreover, the intermediate
Hilbert spaces between $M$ and $N$ are given as follows
\begin{align*}
  [M,N]_\theta=M^{1/2}(M^{-1/2}NM^{-1/2})^\theta M^{1/2}, \quad \theta \in
  [0,1].
\end{align*}
Continuing from above, when $A$ and $C$ are non singular, the 
generic form of the norms are given by the following lemma.
\begin{lemma}
  \label{I_V_and_I_Q}
  Let $A$, consequently, $C=\eta A, \, \eta >0$ be nonsingular. Then 
  \begin{align} \label{eqn:templ}
    \mathcal{I}_V = A + [A, B^TC^{-1}B]_\theta, \quad \mathcal{I}_Q = C + [C,
    BA^{-1}B^T]_{1-\theta}, \quad \theta \in [0,1].
  \end{align}
\end{lemma}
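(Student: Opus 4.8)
The plan is to verify directly that the pair $(\mathcal{I}_V, \mathcal{I}_Q)$ given in \eqref{eqn:templ} satisfies the fixed-point equivalences derived just above, namely $\mathcal{I}_V \sim A + B^* \mathcal{I}_Q^{-1} B$ and $\mathcal{I}_Q \sim C + B \mathcal{I}_V^{-1} B^*$, with constants independent of the problem parameters. Since in the present setting $A = \bar K$, $B = B^* = M$, and $C = \eta A$, the two conditions are essentially symmetric under the substitution $A \leftrightarrow C$, $\theta \leftrightarrow 1-\theta$, so it suffices to establish one of them carefully and invoke symmetry for the other. Concretely, I would substitute $\mathcal{I}_Q = C + [C, BA^{-1}B^T]_{1-\theta}$ into $A + B^T \mathcal{I}_Q^{-1} B$ and show this is spectrally equivalent to $A + [A, B^TC^{-1}B]_\theta = \mathcal{I}_V$.

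The key computational device is a simultaneous-diagonalization argument. First I would reduce to the case $A = Id$ by the congruence $w \mapsto A^{1/2} w$; under this transformation $[A, B^TC^{-1}B]_\theta$ becomes $[Id, G]_\theta = G^\theta$ where $G := A^{-1/2} B^T C^{-1} B A^{-1/2}$, and $[C, BA^{-1}B^T]_{1-\theta}$ transforms compatibly. Then, using a single orthogonal basis that diagonalizes $G$ (equivalently, the generalized eigenproblem $B^T C^{-1} B \, v = \mu A v$), everything becomes diagonal and the spectral equivalence reduces to a scalar inequality: for each eigenvalue $\mu \ge 0$ one must check that
\begin{align*}
  1 + \mu^\theta \; \sim \; 1 + \frac{\mu}{\eta(1 + (\mu/\eta)^{1-\theta})} \cdot (\text{appropriate scalar})
\end{align*}
uniformly in $\mu \in [0,\infty)$ and in $\eta > 0$. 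The point is that $1 + t$ and $1 + t^\alpha + \dots$ type expressions are equivalent to $\max(1, t^{\min})$ up to fixed constants, so both sides are comparable to the same piecewise power of $\mu$. I would package this as: any function of the form $1 + f(\mu)$ with $f$ a sum of powers $\mu^{\alpha_i}$, $\alpha_i \in [0,1]$, is equivalent to $1 + \mu$ when $\mu \ge 1$ and to a constant when $\mu \le 1$, with constants depending only on the number of terms — hence independent of $\eta$ and of the mesh. The nonsingularity of $A$ (and $C = \eta A$) is what guarantees all the inverses and fractional powers are well defined and the congruences are genuine equivalences rather than one-sided bounds.

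The main obstacle I anticipate is bookkeeping the interplay between the two interpolation scales $[A, B^TC^{-1}B]_\theta$ and $[C, BA^{-1}B^T]_{1-\theta}$ simultaneously: when one substitutes one into the other, the nested expression $A + B^T(C + [C, BA^{-1}B^T]_{1-\theta})^{-1}B$ does not obviously collapse to a clean interpolation norm, and showing it is merely \emph{spectrally} equivalent (not equal) to $A + [A, B^TC^{-1}B]_\theta$ requires the scalar estimate above to be carried out with the correct exponents. A secondary subtlety is that $B^T C^{-1} B$ may be only semidefinite (it has a nontrivial kernel coming from the kernel of $B$ on the relevant subspace), so the eigenvalue $\mu = 0$ must be handled — but there both sides equal $A$ (resp.\ $C$), so the equivalence is trivial on that subspace and one only needs care that the interpolation bracket extends continuously to $\mu = 0$, which it does since $\theta \in [0,1]$. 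Once the scalar inequality is pinned down, assembling the two halves and reading off $\underline{\gamma}_v, \bar\gamma_v, \underline{\gamma}_q, \bar\gamma_q$ independent of $\tau, \epsilon, h$ is routine.
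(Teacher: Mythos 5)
Your plan --- normalize by $A^{1/2}$ and $C^{1/2}$, simultaneously diagonalize via the SVD of $C^{-1/2}BA^{-1/2}$, and reduce the fixed-point equivalence $\mathcal{I}_V \sim A + B^*\mathcal{I}_Q^{-1}B$ to the scalar estimate $1 + \mu^\theta \sim 1 + \mu/(1 + \mu^{1-\theta})$ uniformly over $\mu \geq 0$ --- is correct and is essentially the argument of Zulehner [pp.~547--548], to which the paper simply defers for this lemma. One small accounting correction: with $\mu$ taken to be an eigenvalue of $G = A^{-1/2}B^TC^{-1}BA^{-1/2}$, the parameter $\eta$ cancels out of the target scalar inequality entirely (because $C = \eta A$ and the factor $\eta^{-1}$ is absorbed into $\mu$), so the expression you sketched with nested $\eta$'s is slightly off, although the intended estimate is the right one.
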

\begin{proof}
  See \cite{Zulehner2011}[p. 547-548].
\end{proof}
The isometries $\mathcal{I}_V$ and $\mathcal{I}_Q$ above provide a general template for obtaining a variety of preconditioners. Obviously, our goal is to find those that are easier to compute with numerically.
Before we propose preconditioners, we shall need some properties of the $(1,1)$
block of $\mathcal{A},$ and that for the negative Schur complement $S = C +
\hat{B} \hat{A}^{-1}\hat{B}^T.$ Such properties will be useful in developing preconditioners 
using $\mathcal{I}_V$ and $\mathcal{I}_Q.$

\subsection{Properties of the system matrix and Schur complement}
An important property that we shall need shortly when analyzing preconditioners
is that the eigenvalues of the truncated matrix is bounded from above and below
by the eigenvalues of the untruncated matrix.
\begin{lemma}
\label{ind_A}
The operator $\mathcal{A}$ is symmetric and indefinite.
\end{lemma}
\begin{proof}
Symmetry is obvious. Indefiniteness follows from below:
\begin{align}
	x^T \mathcal{A} x = \left[\begin{array}{c}
		u^T \:  v^T	
	\end{array}\right]
	\left[\begin{array}{cc}
		\bar{K} & M \\
		M & -\eta \bar{K}
	\end{array}\right] 
	\left[ \begin{array}{c}
		u \\ v
	\end{array}\right] = \| u \|_{\bar{K}}^2 - \eta \| v \|_{\bar{K}}^2 + 2 Re |(Mu,v)|.
\end{align}
For the choice of $[u^T, v^T] = [0^T, v^T], \: x^T \mathcal{A} x 
	= -\eta \| v \|^2_{\bar{K}} \leq 0.$
\end{proof}
\begin{lemma}
  \label{spd}
  $A, A + B$ is SPD.
\end{lemma}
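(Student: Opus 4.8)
The plan is first to unwind the notation: by the redefinition in \eqref{eqn:not} (together with \eqref{eqn:short_not}), here $A = \bar{K} = K + mm^T$, where $K$ is the Neumann stiffness matrix and $m = (\langle \lambda_p, 1\rangle)_{p\in\mathcal{N}_h}$, while $B = M$ is the mass matrix. Symmetry of $A$ and of $A+B$ is immediate, since $K$, $mm^T$ and $M$ are each symmetric; so the whole content of the lemma is positive definiteness.

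I would then record three elementary facts. (i) $K$ is symmetric positive semidefinite and $x^T K x = 0$ if and only if $x \in \ker K = \mathrm{span}\{\mathbf{1}\}$ --- this is the standard statement that the only functions in $\mathcal{S}_h$ with vanishing gradient are the constants. (ii) $M$ is SPD, being the Gram matrix of the linearly independent nodal basis $\{\lambda_p\}$. (iii) The vector $m$ has strictly positive entries, $m_p = \langle \lambda_p, 1\rangle = \int_\Omega \lambda_p \, dx > 0$, and hence $m^T\mathbf{1} = \sum_p \int_\Omega \lambda_p\,dx = \int_\Omega 1 \, dx = |\Omega| > 0$ by the partition-of-unity identity $\sum_p \lambda_p \equiv 1$.

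The crux is to show $\bar{K} = K + mm^T$ is SPD. For arbitrary $x$, $x^T \bar{K} x = x^T K x + (m^T x)^2 \geq 0$ by (i). If this vanishes then both $x^T K x = 0$ and $m^T x = 0$; the first, by (i), gives $x = c\,\mathbf{1}$ for some scalar $c$, and substituting into the second gives $0 = c\, m^T\mathbf{1} = c\,|\Omega|$, so $c = 0$ by (iii) and thus $x = 0$. Hence $x^T \bar{K} x > 0$ for every $x \neq 0$. This rank-one deflation of the single kernel direction $\mathbf{1}$ of $K$ is the only step with any real content in the lemma.

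Finally, for $A + B = \bar{K} + M$: for $x \neq 0$ we get $x^T(\bar{K}+M)x = x^T\bar{K}x + x^T M x > 0$, each summand already being nonnegative and the second strictly positive by (ii); one could even discard $mm^T$ and use only $K$ positive semidefinite together with $M$ SPD. Combined with the symmetry noted above, $A+B$ is SPD. I expect no genuine obstacle; the argument is a two-line computation once $\ker K$ and the sign of $m^T\mathbf{1}$ are identified.
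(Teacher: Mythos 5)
Your proof is correct and takes essentially the same route as the paper: identify $A=\bar K=K+mm^T$, note that $K$ is symmetric positive semidefinite with kernel $\mathrm{span}\{\mathbf 1\}$, observe that the rank-one term $mm^T$ is strictly positive on that kernel (equivalently $m^T\mathbf 1>0$), and invoke the SPD-ness of the mass matrix $M$ for $A+B$. The paper's version is terser — it simply states ``$K$ is SPD except on $\mathrm{span}\{\mathbf 1\}$\dots\ but $(mm^T\mathbf 1,\mathbf 1)>0$'' — whereas you spell out the deflation argument (if $x^T\bar Kx=0$ then $x=c\mathbf 1$ and then $c\,m^T\mathbf 1=0$ forces $c=0$), which is the cleaner way to make the step rigorous.
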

\begin{proof}
  From \eqref{eqn:not}, we recall that $A = \bar{K} = K + mm^T.$ Here $K$ being a
  stiffness matrix corresponding to natural boundary condition is SPD except on
  the span of vector $\mathbf{1} = [1,1,1,\dots,1]^T,$ which is in the kernel of
  $K,$ but $(mm^T\mathbf{1},\mathbf{1})>0.$ Also,  $B = M$ being a mass
  matrix is SPD, $A + B$ is SPD.
\end{proof}

\begin{fact}[{\bf Permutation preserves eigenvalues}]
  \label{lem:perm_eig}
  Let $P \in \mathbb{Z}^{n \times n}$ be a permutation matrix, then $P^T \hat{A}
  P$ and $\hat{A}$ are similar.
\end{fact}
\begin{proof}
  $P$ being a permutation matrix, $P^TP = Id,$ hence the proof.
\end{proof}
\begin{lemma}[{\bf Poincare separation theorem for eigenvalues}]
  \label{lem:poincare}
  Let $Z \in \mathbb{R}^{n \times n}$ be any symmetric matrix with eigenvalues
  $\lambda_1 \leq \lambda_2 \leq \cdots \leq \lambda_n,$ and let $P$ be a
  semi-orthogonal $n \times k$ matrix such that $P^TP = Id \in \mathbb{R}^{k
    \times k}.$ Then the eigenvalues $\mu_1 \leq \mu_2 \leq \cdots \leq \mu_{n-k+i}$ of
  $P^TZP$ are separated by the eigenvalues of $Z$ as follows
  \begin{align}
    \lambda_i \leq \mu_i \leq \lambda_{n-k+i}.
  \end{align}
\end{lemma}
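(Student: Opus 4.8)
The plan is to obtain both inequalities from the Courant--Fischer min-max characterization of the eigenvalues of a symmetric matrix, using the single structural fact that $P$ acts as an isometry from $\mathbb{R}^k$ onto the $k$-dimensional subspace $\mathcal{R} := \mathrm{range}(P) \subseteq \mathbb{R}^n$. First I would record that $P^T Z P$ is symmetric (since $P$ has orthonormal columns), so its $k$ eigenvalues are real and the ordering $\mu_1 \le \cdots \le \mu_k$ is well defined. Next, because $P^TP = Id$ we have $\|Py\|_2 = \|y\|_2$ for all $y \in \mathbb{R}^k$, hence the Rayleigh quotient of $P^TZP$ at $y \ne 0$ coincides with that of $Z$ at $Py$:
\begin{align}
  \frac{y^T (P^T Z P) y}{y^T y} \;=\; \frac{(Py)^T Z (Py)}{(Py)^T (Py)} \;=:\; R_Z(Py).
\end{align}
Moreover, since $P$ has full column rank, $y \mapsto Py$ is a linear bijection between $j$-dimensional subspaces of $\mathbb{R}^k$ and $j$-dimensional subspaces of $\mathcal{R}$, for every $j \le k$. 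These two observations reduce the problem to comparing optimizations of $R_Z$ over subspaces contained in $\mathcal{R}$ with the same optimizations over all subspaces of $\mathbb{R}^n$.

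For the lower bound $\lambda_i \le \mu_i$ I would invoke the min-max form and the subspace bijection:
\begin{align}
  \mu_i \;=\; \min_{\substack{U \subseteq \mathbb{R}^k \\ \dim U = i}} \;\max_{0 \neq y \in U} R_Z(Py)
       \;=\; \min_{\substack{W \subseteq \mathcal{R} \\ \dim W = i}} \;\max_{0 \neq x \in W} R_Z(x).
\end{align}
The family of $i$-dimensional subspaces $W \subseteq \mathcal{R}$ is contained in the family of all $i$-dimensional subspaces of $\mathbb{R}^n$; taking a minimum over a smaller family can only increase the value, so the right-hand side is bounded below by $\min_{\dim W = i,\, W \subseteq \mathbb{R}^n} \max_{0 \neq x \in W} R_Z(x) = \lambda_i(Z)$.

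For the upper bound $\mu_i \le \lambda_{n-k+i}$ I would instead use the max-min form:
\begin{align}
  \mu_i \;=\; \max_{\substack{U \subseteq \mathbb{R}^k \\ \dim U = k-i+1}} \;\min_{0 \neq y \in U} R_Z(Py)
       \;=\; \max_{\substack{W \subseteq \mathcal{R} \\ \dim W = k-i+1}} \;\min_{0 \neq x \in W} R_Z(x),
\end{align}
and compare with $\lambda_{n-k+i}(Z) = \max_{\dim W = k-i+1,\, W \subseteq \mathbb{R}^n} \min_{0 \neq x \in W} R_Z(x)$, which is exactly the max-min characterization of the $(n-k+i)$-th eigenvalue of $Z$ because $n - (n-k+i) + 1 = k-i+1$. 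The maximum defining $\mu_i$ is taken over a subfamily of all $(k-i+1)$-dimensional subspaces of $\mathbb{R}^n$, hence it is $\le \lambda_{n-k+i}(Z)$. Combining the two estimates yields $\lambda_i \le \mu_i \le \lambda_{n-k+i}$ for $i = 1,\dots,k$. There is no substantial obstacle here; the only points requiring care are the dimension bookkeeping — in particular checking $n - (n-k+i) + 1 = k-i+1$ so that the right eigenvalue index appears in the max-min characterization — and the elementary but essential monotonicity principle that restricting the collection of subspaces over which one optimizes moves a minimum up and a maximum down.
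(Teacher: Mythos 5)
Your proof is correct. The paper itself does not supply a proof of this lemma; it only cites Rao (1998, p.~337) for it, so there is no in-paper argument to compare against. The route you take is the standard one: symmetry of $P^TZP$, identification of its Rayleigh quotient with that of $Z$ restricted to $\mathrm{range}(P)$ via the isometry $P$, and then both directions of the Courant--Fischer variational characterization, exploiting the monotonicity of $\min$-$\max$ and $\max$-$\min$ under restriction of the family of admissible subspaces to those lying in the $k$-dimensional range of $P$. The index bookkeeping $n-(n-k+i)+1 = k-i+1$ that you flag as the one delicate point is indeed the only place one could slip, and you have it right. Your argument therefore stands as a self-contained proof of what the paper merely quotes. (Incidentally, the paper's statement has a small typographical slip, writing $\mu_{n-k+i}$ where the top index in the ordered list of eigenvalues of $P^TZP$ should read $\mu_k$; your proof implicitly uses the corrected version.)
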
  
\begin{proof}  
  The theorem is proved in \cite[p. 337]{Rao1998}.
\end{proof}
\begin{lemma}[{\bf Eigenvalues of the truncated (1,1) block}]
  \label{thm:eig_bound}
  Let $n = |\mathcal{N}_h|.$ Let $\lambda_1 \leq \lambda_2 \leq \dots \leq
  \lambda_n$ be the eigenvalues of $A,$ and let $\hat{\lambda}_1 \leq
  \hat{\lambda}_2 \leq \dots \leq \hat{\lambda}_n$ be the eigenvalues of
  truncated matrix $\hat{A}.$ Let $k=\sum_{i=1}^n T(i,i)$ be the number of
  untruncated rows in $\hat{A}.$ Let $\hat{\lambda}_{n_1} \leq
  \hat{\lambda}_{n_2} \leq \dots \leq \hat{\lambda}_{n_k}$ be the
  eigenvalues of $\hat{A}$ excluding the $n-k$ trivial eigenvalues one of
  $\hat{A}$ that appear due to addition of $\hat{T}$ in \eqref{eqn:not}. Then the
  eigenvalues of truncated and untruncated matrices are related as follows
  \begin{align}
    \lambda_i \leq \hat{\lambda}_{n_i} \leq \lambda_{n-k+i}, \quad 1 \leq i \leq k.
  \end{align}
\end{lemma}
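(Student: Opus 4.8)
The plan is to realize the truncated matrix $\hat A$ as (a permutation of) a block matrix built from a compression of $A$, and then invoke the Poincaré separation theorem (Lemma \ref{lem:poincare}). First I would use Fact \ref{lem:perm_eig}: by a permutation $P$ that groups the $k$ untruncated indices first and the $n-k$ truncated indices last, the matrix $P^T\hat A P$ becomes block diagonal,
\begin{align}
  P^T \hat A P = \begin{pmatrix} A_{II} & 0 \\ 0 & Id_{n-k} \end{pmatrix},
\end{align}
where $A_{II}$ is the $k\times k$ principal submatrix of $A$ obtained by deleting the rows and columns indexed by the active set, and $Id_{n-k}$ is the identity block coming from the $\hat T$ term in \eqref{eqn:not}. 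This is where the ``$n-k$ trivial eigenvalues equal to one'' come from, and the remaining eigenvalues $\hat\lambda_{n_1}\le\dots\le\hat\lambda_{n_k}$ of $\hat A$ are exactly the eigenvalues of $A_{II}$; by Fact \ref{lem:perm_eig} these are unchanged by the permutation.

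Next I would observe that $A_{II} = R^T A R$, where $R \in \mathbb{R}^{n\times k}$ is the matrix whose columns are the standard basis vectors $e_j$ for $j$ in the untruncated index set. Since distinct standard basis vectors are orthonormal, $R^T R = Id_k$, so $R$ is semi-orthogonal in the sense required by Lemma \ref{lem:poincare}. Applying the Poincaré separation theorem to $Z = A$ (symmetric by Lemma \ref{spd}) with this $R$ gives immediately
\begin{align}
  \lambda_i \le \mu_i \le \lambda_{n-k+i}, \quad 1 \le i \le k,
\end{align}
where $\mu_1\le\dots\le\mu_k$ are the eigenvalues of $A_{II}$. Identifying $\mu_i = \hat\lambda_{n_i}$ from the previous paragraph yields the claimed bound $\lambda_i \le \hat\lambda_{n_i} \le \lambda_{n-k+i}$ for $1\le i\le k$.

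I do not anticipate a serious obstacle here; the statement is essentially a packaging of the Poincaré interlacing inequality. The only points requiring a little care are bookkeeping: (i) checking that truncation via $T A T + \hat T$ genuinely produces the block-diagonal form above after permutation — i.e. that $TAT$ zeros out exactly the active rows and columns and $\hat T$ fills in ones on the diagonal there, so no spurious off-diagonal coupling survives between the two blocks; and (ii) correctly matching the indices $n_i$ of the nontrivial eigenvalues of $\hat A$ with the eigenvalues of the compression $A_{II} = R^T A R$, including a remark on how the $n-k$ unit eigenvalues interlace harmlessly (they lie in $[\lambda_1,\lambda_n]$ since $A$ is SPD with $\lambda_1>0$, and in any case are excluded from the indexed set $n_1,\dots,n_k$). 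Once these identifications are made, the result follows directly from Lemma \ref{lem:poincare} with no further computation.
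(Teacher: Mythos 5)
Your proof is correct and follows essentially the same route as the paper: permute so that the $\hat T$ identity block is isolated, recognize the nontrivial part of $\hat A$ as a $k\times k$ principal submatrix (equivalently a compression $R^TAR$ with $R^TR=Id_k$), and invoke the Poincar\'e separation theorem of Lemma~\ref{lem:poincare}. One small parenthetical slip: the claim that the unit eigenvalues ``lie in $[\lambda_1,\lambda_n]$ since $A$ is SPD with $\lambda_1>0$'' is unjustified ($1$ need not lie between $\lambda_1$ and $\lambda_n$), but as you note those eigenvalues are excluded from the indexed set anyway, so this does not affect the argument.
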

\begin{proof}
  The proof follows by an application of Poincare separation theorem. For this,
  it is convenient to permute the matrix into truncated and untruncated rows and
  columns. Let $P$ be a permutation matrix that renumbers the rows and columns
  such that the truncated rows and columns are numbered first, i.e.,
\begin{align}
  P^T\hat{A}P = \begin{pmatrix}
    I & \\
      & R^TP^T\hat{A}PR
  \end{pmatrix},
\end{align}
where $R \in \mathbb{R}^{n \times k}$ is the restriction matrix defined as
follows
\begin{align}
  R =
  \begin{pmatrix}
    \begin{pmatrix*}[c]
      0 & 0 & \dots & 0 \\
      \vdots & \hdots & \hdots & 0 \\
      0 & 0 & \dots & 0
    \end{pmatrix*}_{n-k \times k} \\    
    \begin{pmatrix*}[c]
      1 & 0 & \dots & 0 \\
      0 & 1 & \dots & 0 \\
      \vdots & \ddots & \dots & 0 \\
      0 & 0 & \dots & 1
    \end{pmatrix*}_{k \times k}
  \end{pmatrix}. \label{eqn:R}
\end{align}
Clearly $R^T R = Id \in \mathbb{R}^{k \times k}.$ From Lemma \ref{lem:perm_eig},
$P^T \hat{A} P$ and $\hat{A}$ are similar and $P^TAP$ and $A$ are
similar. Applying Lemma \ref{lem:poincare}, to $P^TAP$ and $R^T(P^TAP)R,$ we
have the proof.
\end{proof}
\begin{corollary} \label{cor:eig_trunc_untrunc}
  From Theorem \ref{thm:eig_bound}, we have $$\lambda_{\min}(\hat{A}) \geq
  \lambda_{\min}(A)>0,$$ 
$$\lambda_{\max}(\hat{A}) \leq
  \lambda_{\max}(A),$$
hence $\hat{A}$ is SPD since $A$ is SPD from Lemma \eqref{spd}.
  Moreover, $cond(\hat{A}) \leq cond(A).$
\end{corollary}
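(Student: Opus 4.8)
The plan is to read the corollary off directly from the Poincar\'e-type bound in Lemma~\ref{thm:eig_bound} together with the positive definiteness of $A$ from Lemma~\ref{spd}. First I would instantiate Lemma~\ref{thm:eig_bound} at the two endpoints $i=1$ and $i=k$. The case $i=1$ gives $\lambda_1\le\hat\lambda_{n_1}$, so the smallest nontrivial eigenvalue of $\hat A$ is at least $\lambda_{\min}(A)=\lambda_1$, which is strictly positive since $A$ is SPD; in particular $\hat\lambda_{n_1}>0$. The case $i=k$ gives $\hat\lambda_{n_k}\le\lambda_{n-k+k}=\lambda_n=\lambda_{\max}(A)$, so the largest nontrivial eigenvalue of $\hat A$ is at most $\lambda_{\max}(A)$.

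Next I would account for the remaining $n-k$ eigenvalues of $\hat A$. Because $\hat A=TAT+\hat T$ with $T$ and $\hat T$ diagonal $0/1$ matrices supported on complementary index sets, $\hat A e_j=\hat T e_j=e_j$ for every truncated index $j$, so $\hat A$ carries the eigenvalue $1$ with multiplicity $n-k$, and these values are positive. Combined with $\hat\lambda_{n_1}>0$ and the obvious symmetry of $\hat A$ (a symmetric matrix conjugated by a diagonal, plus a diagonal), this already shows $\hat A$ is SPD. For the two-sided bounds over the full spectrum one also notes that $1\in[\lambda_{\min}(A),\lambda_{\max}(A)]$: indeed $\lambda_{\min}(A)\le\langle A\mathbf{1},\mathbf{1}\rangle/\|\mathbf{1}\|^2=(m^T\mathbf{1})^2/n=|\Omega|^2/n\le 1$ (using $\mathbf{1}\in\ker K$ and $m^T\mathbf{1}=|\Omega|$), while $\lambda_{\max}(A)\ge\langle Ae_j,e_j\rangle=K_{jj}+m_j^2\ge 1$ for an interior node $j$ in the standard $P_1$ normalization. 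Hence $\lambda_{\min}(\hat A)=\min(1,\hat\lambda_{n_1})\ge\lambda_{\min}(A)$ and $\lambda_{\max}(\hat A)=\max(1,\hat\lambda_{n_k})\le\lambda_{\max}(A)$, and dividing the upper bound by the lower bound yields $\mathrm{cond}(\hat A)=\lambda_{\max}(\hat A)/\lambda_{\min}(\hat A)\le\lambda_{\max}(A)/\lambda_{\min}(A)=\mathrm{cond}(A)$.

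I expect the only delicate point to be this bookkeeping of the $n-k$ trivial eigenvalues, since Lemma~\ref{thm:eig_bound} controls only the nontrivial part of $\mathrm{spec}(\hat A)$. If one is uneasy about the normalization-dependent inequality $1\le\lambda_{\max}(A)$, the statement is equally well read as a bound on the conditioning of $\hat A$ restricted to the invariant subspace complementary to $\mathrm{span}\{e_j:T_{jj}=0\}$; this is precisely the subspace on which the truncated saddle-point operator acts nontrivially, and on it all three estimates follow at once from the first paragraph. Everything else is immediate from the already-established lemmas.
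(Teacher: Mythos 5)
Your core plan --- instantiate the Poincar\'e separation bound of Lemma~\ref{thm:eig_bound} at $i=1$ and $i=k$, then invoke Lemma~\ref{spd} for positivity --- is exactly what the paper intends, and that part of your argument is correct. Where you go further than the paper is in carefully accounting for the $n-k$ trivial eigenvalues equal to $1$ that $\hat T$ injects into $\mathrm{spec}(\hat A)$. This is a genuine point: Lemma~\ref{thm:eig_bound} only interlaces the $k$ \emph{nontrivial} eigenvalues $\hat\lambda_{n_1},\dots,\hat\lambda_{n_k}$ inside $\mathrm{spec}(A)$, so the claims $\lambda_{\min}(\hat A)\ge\lambda_{\min}(A)$ and $\lambda_{\max}(\hat A)\le\lambda_{\max}(A)$ \emph{over the full spectrum of} $\hat A$ additionally require $\lambda_{\min}(A)\le 1\le\lambda_{\max}(A)$, a fact the paper never records.

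Your two supporting estimates for this missing inequality have different strengths. The upper bound $\lambda_{\min}(A)\le 1$ is clean: with $K\mathbf{1}=0$ and $m^T\mathbf{1}=|\Omega|=1$ on the unit square, the Rayleigh quotient at $\mathbf{1}$ gives $\lambda_{\min}(A)\le 1/n\le 1$. The lower bound $\lambda_{\max}(A)\ge 1$ via $\langle Ae_j,e_j\rangle=K_{jj}+m_j^2$ is, as you yourself flag, normalization and mesh dependent; it holds for the unit-square $P_1$ triangulations used in the paper's experiments (where interior diagonal entries $K_{jj}$ are $O(1)$ and well above $1$), but it is not a consequence of the stated lemmas alone. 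Your alternative reading --- that the corollary should be understood on the invariant subspace complementary to $\mathrm{span}\{e_j:T_{jj}=0\}$, where only the nontrivial eigenvalues live and the three estimates follow directly from the endpoint instantiations --- is the cleanest way to make the statement hypothesis-free, and is arguably the intended meaning since the truncated operator is the identity on the active-set coordinates anyway. In short: your proof is correct modulo one normalization assumption you have explicitly identified, and your observation that the paper's one-line justification is incomplete on the trivial part of the spectrum is accurate.
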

\begin{remark}
From  \eqref{eqn:TandThat} and \eqref{eqn:trun_simplified}, we have 
\begin{align}
\hat{\mathcal{A}} = \begin{pmatrix}
T & \\
  & Id
\end{pmatrix} \begin{pmatrix}
A & B \\ B^T & -\eta A
\end{pmatrix} \begin{pmatrix}
T & \\ & Id
\end{pmatrix}.
\end{align}
Using similar argument as in Lemma \eqref{thm:eig_bound} and Cor. \ref{cor:eig_trunc_untrunc}, we have 
$\lambda_{\min}(\hat{\mathcal{A}}) \geq
  \lambda_{\min}(\mathcal{A})>0$ and 
$\lambda_{\max}(\hat{\mathcal{A}}) \leq
  \lambda_{\max}(\mathcal{A}).$
\end{remark}
We know that the matrix $M$ is SPD, and $K$ is a SPSD.  In the following, we
observe the properties of truncated matrices.
\begin{definition}
  Let $G(A)=(V,E)$ be the adjacency graph of a matrix $A \in \mathbb{R}^{N
    \times N}$.  The matrix $A$ is called irreducible if any vertex $i \in V$ is
  connected to any vertex $j \in V$. Otherwise, $A$ is called reducible.
\end{definition}
\begin{definition} \label{def:mmatrix1} A matrix $A \in \mathbb{R}^{N \times N}$
  is called an $M$-matrix if it satisfies the following three properties:
  $a_{ii}>0$ for $i=1,\dots,N,$
  $a_{ij}\le 0$ for $i \neq j$, $i,j=1,\dots,N,$ and
  $A$ is non-singular and $A^{-1} \ge 0.$
\end{definition}
\begin{definition}
  A square matrix $A$ is strictly diagonally dominant if the following holds
  \begin{align}
  |a_{ii}| > \sum_{j \neq i}|a_{ij}|, \quad i=1,\dots,N, \label{eqn:diag_dom}
  \end{align}
  and it is called irreducibly diagonally dominant if $A$ is irreducible and the
  following holds
  \begin{align}
    |a_{ii}| \geq \sum_{j \neq i}|a_{ij}|, \quad i=1,\dots,N, \label{eqn:irr_diag_dom} 
  \end{align}
  where strict inequality holds for at least one $i$.
\end{definition}
A simpler criteria for $M$-matrix property is then given by the following theorem.
\begin{lemma} \label{thm:mmatrix} If the coefficient matrix $A$ is strictly or
  irreducibly diagonally dominant and satisfies the following conditions
  \begin{enumerate}
  \item $a_{ii}>0$ for $i=1,\dots,N$
  \item $a_{ij}\le 0$ for $i \neq j$, $i,j=1,\dots,N$
  \end{enumerate} then $A$ is an $M-$matrix.
\end{lemma}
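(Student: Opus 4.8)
The plan is to verify directly the three defining properties of an $M$-matrix listed in Definition \ref{def:mmatrix1}. Properties $a_{ii}>0$ and $a_{ij}\le 0$ for $i\neq j$ are precisely the two hypotheses of the lemma, so the only real work is to show that $A$ is nonsingular and that $A^{-1}\ge 0$ entrywise.

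To this end I would use the standard Jacobi-type splitting $A = D - B$, where $D = \mathrm{diag}(a_{11},\dots,a_{NN})$ has strictly positive diagonal entries by hypothesis $1$, and $B = D - A$ has zero diagonal and off-diagonal entries $B_{ij} = -a_{ij}\ge 0$ by hypothesis $2$; hence $B\ge 0$. Set $J = D^{-1}B$, so that $J\ge 0$ (as $D^{-1}$ is a positive diagonal matrix) and $A = D(Id - J)$. The crux is to prove $\rho(J)<1$ for the spectral radius $\rho(J)$. The $i$th row sum of $J$ equals $\sum_{j\neq i}|a_{ij}|/a_{ii}$, which by diagonal dominance is $\le 1$, with strict inequality either in every row (strict diagonal dominance) or in at least one row together with irreducibility of $J$ (irreducible diagonal dominance, using that $J$ and $A$ share the same off-diagonal sparsity pattern and hence the same adjacency graph). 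In the strictly diagonally dominant case, $\|J\|_\infty < 1$ immediately yields $\rho(J)\le \|J\|_\infty < 1$. In the irreducibly diagonally dominant case I would argue by contradiction via Perron--Frobenius: if $\rho(J)=1$, then since $J\ge 0$ is irreducible there is a strictly positive eigenvector $x$ with $Jx = x$; evaluating at an index $\ell$ with $x_\ell$ maximal and using that the $\ell$th row sum of $J$ is $\le 1$ forces that row sum to equal $1$ and $x_j = x_\ell$ for every $j$ with $J_{\ell j}>0$; propagating this along the strongly connected adjacency graph shows $x$ is constant, so $J\mathbf{1} = \mathbf{1}$, i.e.\ every row sum equals $1$, contradicting that one is strictly less. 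Hence $\rho(J)<1$. (Alternatively, one may simply invoke the classical Taussky theorem on irreducibly diagonally dominant matrices.)

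Once $\rho(J)<1$ is in hand, the Neumann series $(Id - J)^{-1} = \sum_{k\ge 0} J^k$ converges, and it is nonnegative because every $J^k\ge 0$. Therefore $Id - J$, and hence $A = D(Id - J)$, is nonsingular, and $A^{-1} = (Id - J)^{-1} D^{-1} \ge 0$ as a product of nonnegative matrices. Combined with hypotheses $1$ and $2$, all three conditions of Definition \ref{def:mmatrix1} are satisfied, so $A$ is an $M$-matrix.

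I expect the main obstacle to be the irreducibly diagonally dominant case: there the weak row-sum inequality alone does not push $\rho(J)$ strictly below $1$, and one genuinely needs the irreducibility (equivalently, strong connectivity of the adjacency graph) together with a Perron--Frobenius argument to propagate the single strict inequality into a contradiction. The strictly diagonally dominant case, by contrast, is a one-line consequence of $\|J\|_\infty < 1$.
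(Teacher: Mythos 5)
The paper does not actually prove this lemma: it is stated as a known classical criterion (a standard sufficient condition for the $M$-matrix property, going back to Ostrowski and usually attributed to Taussky for the irreducible case) and used without proof. So there is no in-paper argument to compare against.

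Your proof is correct and is the standard textbook route. Writing $A = D(Id - J)$ with $J = D^{-1}B \ge 0$ and reducing the question to $\rho(J) < 1$ is exactly the regular-splitting argument; the strictly diagonally dominant case follows from $\|J\|_\infty < 1$, and the irreducibly diagonally dominant case is handled correctly by noting $\rho(J) \le \|J\|_\infty \le 1$ and then ruling out $\rho(J)=1$ via Perron--Frobenius (a positive eigenvector, a maximal index, and propagation of equality along the strongly connected adjacency graph to contradict the single strict row inequality). The Neumann series $\sum_{k \ge 0} J^k \ge 0$ then gives both nonsingularity and $A^{-1} = (Id - J)^{-1} D^{-1} \ge 0$. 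One small presentational remark: once you have $\rho(J) \le 1$ from the row sums, it suffices to show $1$ is not an eigenvalue of $J$ (equivalently $Id - J$ nonsingular, which is Taussky's theorem applied to $Id - J$), because for a nonnegative matrix $\rho(J)$ is itself an eigenvalue; your more explicit maximal-index argument proves exactly this and is self-contained, so either phrasing is fine.
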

 
\begin{remark} \label{rem:KnotM} Note that $K$ is not an $M$-matrix because $K
  \cdot \mathbf{1}=0,$ hence, the condition of \eqref{eqn:irr_diag_dom} that
  strict inequality must hold for atleast one row is not satisfied. Moreover,
  mass matrix $M$ has positive off-diagonal entries, hence, it does not satisfy
  the hypothesis of Lemma \ref{thm:mmatrix}, thus, we cannot conclude that $M$
  is an $M$-matrix either.
\end{remark}
Although, from Lemma \ref{rem:KnotM}, $K$ is not an $M$-matrix, the truncated
matrix $\hat{K}$ defined in \eqref{eqn:not} with at least one truncated row
and column is an $M$-matrix. Let the set of truncated nodes be defined by
\begin{align}
\mathcal{N}^{\bullet}_h = \left\{i \: : \: T(i,i) = 0 \right\}.
\end{align}
\begin{lemma} \label{lem:mmatrices}
  Let $|\mathcal{N}^{\bullet}_h|\geq 1,$ then $\hat{K},$ $P^T \hat{K} P,$ and $R^T
  P^T \hat{K} P R$ are $M$-matrices.
\end{lemma}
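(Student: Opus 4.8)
The plan is to establish the $M$-matrix property for $\hat{K}$ first, and then transfer it to $P^T\hat{K}P$ and $R^TP^T\hat{K}PR$ by noting that the relevant structural properties are preserved under the operations involved. Recall from \eqref{eqn:not} that $\hat{K} = TKT + \hat{T}$, where $T$ is the diagonal $0/1$ truncation matrix and $\hat{T} = Id - T$ picks out the truncated indices. First I would describe the sign pattern: for a truncated index $i \in \mathcal{N}^{\bullet}_h$ (i.e.\ $T_{ii}=0$), the $i$th row and column of $TKT$ vanish and $\hat{T}$ contributes a $1$ on the diagonal, so the $i$th row of $\hat{K}$ is the unit vector $e_i^T$; in particular $\hat{K}_{ii}=1>0$ and all off-diagonal entries in that row are $0 \le 0$. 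For a non-truncated index $i$ (i.e.\ $T_{ii}=1$), we have $\hat{K}_{ii}=K_{ii}>0$ and $\hat{K}_{ij}=K_{ij}T_{jj}$ for $j\neq i$, which is $\le 0$ since $K_{ij}\le 0$ (stiffness matrix with linear finite elements on a triangulation) and $T_{jj}\ge 0$. So conditions (1) and (2) of Lemma \ref{thm:mmatrix} hold for $\hat{K}$.

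Next I would verify diagonal dominance. The matrix $K$ is weakly diagonally dominant row-wise because $K\mathbf{1}=0$, i.e.\ $K_{ii} = -\sum_{j\neq i}K_{ij} = \sum_{j\neq i}|K_{ij}|$ for every $i$. For a non-truncated row $i$ of $\hat{K}$ we therefore have
\begin{align}
  \hat{K}_{ii} = K_{ii} = \sum_{j\neq i}|K_{ij}| \geq \sum_{j\neq i}|K_{ij}|T_{jj} = \sum_{j\neq i}|\hat{K}_{ij}|,
\end{align}
so weak diagonal dominance is inherited, and for a truncated row it holds trivially since the off-diagonal part is zero. To get the $M$-matrix conclusion from Lemma \ref{thm:mmatrix} I need the dominance to be \emph{strict in at least one row} together with irreducibility (or strict dominance outright). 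Since $|\mathcal{N}^{\bullet}_h|\geq 1$, pick a truncated node $i_0$; any non-truncated node $j$ adjacent to $i_0$ in $G(K)$ then has $|\hat{K}_{ji_0}| = |K_{ji_0}| < \hat{K}_{jj}$ dropped relative to the corresponding bound for $K$, giving strict dominance in row $j$. The one subtlety is the case where $\hat{K}$ is reducible — this happens exactly when the truncation disconnects the mesh graph. I would handle this by the standard argument of applying the criterion component by component: on each connected component of $G(\hat{K})$ the restricted matrix is irreducible, and because the original finite element graph $G(K)$ is connected, every component either consists of a single truncated (hence trivial $1\times 1$ identity) block or contains at least one node adjacent to a truncated node and hence has a strictly dominant row; in all cases the component is an $M$-matrix, and a block-diagonal matrix with $M$-matrix diagonal blocks is itself an $M$-matrix (one checks $\hat{K}^{-1}\ge 0$ blockwise).

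Finally, for $P^T\hat{K}P$: $P$ is a permutation matrix, so this is just a simultaneous row/column renumbering, which preserves the diagonal entries, the off-diagonal sign pattern, diagonal dominance, irreducibility structure, and the inverse nonnegativity ($(P^T\hat{K}P)^{-1} = P^T\hat{K}^{-1}P \ge 0$); hence $P^T\hat{K}P$ is an $M$-matrix. For $R^TP^T\hat{K}PR$: from the proof of Lemma \ref{thm:eig_bound}, with $P$ chosen to list the truncated indices first, $P^T\hat{K}P = \begin{pmatrix} I & \\ & R^TP^T\hat{K}PR\end{pmatrix}$, i.e.\ $R^TP^T\hat{K}PR$ is precisely the principal submatrix of $\hat{K}$ on the non-truncated indices. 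A principal submatrix of an $M$-matrix is an $M$-matrix (the sign conditions are inherited trivially, and nonnegativity of the inverse of a principal submatrix of an $M$-matrix is a classical fact — alternatively one observes directly that this submatrix is $RKR^T$ restricted appropriately and re-runs the diagonal-dominance argument, noting that deleting the truncated rows/columns strictly lowers the row sums at the boundary nodes). I expect the reducibility bookkeeping to be the only place requiring genuine care; everything else is a direct inheritance argument from the structure of $K$ and the definition of the truncation.
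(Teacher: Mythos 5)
Your proof follows the same overall route as the paper: verify the sign conditions and (weak/strict) diagonal dominance so that the hypotheses of Lemma \ref{thm:mmatrix} apply, then transfer to $P^T\hat{K}P$ by invariance under symmetric permutation and to $R^TP^T\hat{K}PR$ as a principal submatrix. The one substantive difference is your treatment of irreducibility, and it is in fact an improvement: the paper concludes from \eqref{eqn:strict_diag_dom}, \eqref{eqn:dd} that $\hat{K}$ is \emph{irreducibly} diagonally dominant, but whenever $|\mathcal{N}^{\bullet}_h|\geq 1$ every truncated row of $\hat{K}$ is a unit vector $e_i^T$, so vertex $i$ is isolated in $G(\hat{K})$ and $\hat{K}$ is necessarily reducible — the paper's stated criterion does not apply as literally invoked. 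Your component-by-component argument fills this gap cleanly: the trivial $1\times 1$ identity blocks at the truncated nodes are $M$-matrices, each connected component of the untruncated subgraph is irreducible and, because $G(K)$ is connected, has at least one node adjacent in $G(K)$ to a truncated node, giving a strictly dominant row; each diagonal block is therefore an $M$-matrix and so is their block-diagonal assembly. (One minor wording quibble: $\hat{K}$ is not reducible \emph{only when} the truncation disconnects the mesh; it is reducible whenever there is any truncation, because of the isolated identity rows — but your subsequent argument is correct regardless.) For the principal-submatrix step, your citation-free fallback is also valid; the paper instead cites \cite{Horn1991}.
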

\begin{proof}
  Since $|\mathcal{N}^{\bullet}_h|\geq 1,$ for all rows corresponding to truncated set
  $\mathcal{N}^{\bullet}_h,$ it is trivial that we have strict diagonal dominance:
  \begin{align}
    \hat{k}_{ii} = 1 = |\hat{k}_{ii}| > 0 = \sum_{j \neq i}\hat{k}_{ij}, \quad
    \forall i \in \mathcal{N}^{\bullet}_h, \quad j=1,\dots,
    |\mathcal{N}_h|, \label{eqn:strict_diag_dom}
  \end{align}
  where as, for rows corresponding to untruncated set $\mathcal{N}_h \setminus
  \mathcal{N}^{\bullet}_h,$ we have
  \begin{align}
    \hat{k}_{ii} =  k_{ii} = |\hat{k}_{ii}| \geq \sum_{j \neq
      i} |k_{ij}| \geq \sum_{j \neq i}|\hat{k}_{ij}|, \quad \forall i
    \in \mathcal{N}_h \setminus \mathcal{N}^{\bullet}_h, \quad j=1,\dots,
    |\mathcal{N}_h|. \label{eqn:dd}
  \end{align}
  Moreover, we have 
  \begin{align} \label{eqn:entries}
    \hat{k}_{ij} = 
    \begin{cases}(\mbox{when}~i=j) \quad
      \begin{cases}
        1, \quad \forall i \in \mathcal{N}^{\bullet}_h, \\
        k_{ii} > 0, \quad \forall i \in \mathcal{N}_h \setminus
        \mathcal{N}^{\bullet}_h,
      \end{cases}\\
      (\mbox{when}~i \neq j) \quad k_{ij} < 0, \quad \forall i\in\mathcal{N}_h.
    \end{cases}
  \end{align}
  The sufficient conditions of Lemma \ref{thm:mmatrix} are now satisfied: from
  \eqref{eqn:strict_diag_dom} and \eqref{eqn:dd}, we conclude that $\hat{K}$
  is irreducibly diagonaly dominant, and \eqref{eqn:entries} satisfies
  hypothesis 1. and 2. of Lemma \ref{thm:mmatrix}. Hence, $\hat{K}$ is an
  $M$-matrix.  $P^T\hat{K}P$ being the symmetric permutation of rows and
  columns of $\hat{K}$ remains an $M$-matrix. Lastly, $R^T P^T \hat{K} P
  R$ being a principle submatrix of $P^T \hat{K} P$ is also an $M$-matrix,
  see proof in \cite{Horn1991}[p. 114].
\end{proof}
\begin{remark}
  To solve with $\hat{A},$ we use the Sherman-Woodbury
  formula
\begin{align}
  \hat{A}^+ = (\hat{K} + \tilde{m}\tilde{m}^T)^{+} = \hat{K}^{+} -
  \frac{\hat{K}^{+}\tilde{m}\tilde{m}^T \hat{K}^{+}}{1 +
    \tilde{m}^T\hat{K}^{+} \tilde{m}}.
\end{align}
Here $\hat{K}^+$ denotes pseudo-inverse of $\hat{K},$ however, $\hat{K}$ is a non-singular $M$-matrix for
$|\mathcal{N}_h^\bullet|\geq 1,$ thus, in this case, we may replace $\hat{K}^+$ by
$\hat{K}^{-1}.$ Since $\hat{K}$ is an $M$-matrix from Lemma \ref{lem:mmatrices} above for $|\mathcal{N}_h^\bullet|\geq 1,$ algebraic multigrid,
or incomplete Cholesky (which is as stable as exact Cholesky factorization,
\cite{Meijerink1977}[Theorem 3.2] ) may be used as a preconditioner to solve
with $\hat{K}$ inexactly.
\end{remark}   
Before we define a preconditioner involving Schur complement, it is essential to know whether $S$ is
nonsingular.

In the following, we provide a slightly
different proof then in \cite{Graeser2009}, where similar result is shown for
continuous Schur complement.
\begin{theorem}
  \label{schur_spd}
  The negative Schur complement $S = C +
  \hat{B}\hat{A}^{-1}\hat{B}^T$ is non-singular, in particular, SPD
  if and only if $|\mathcal{N}_h^{\bullet}| < |\mathcal{N}_h|.$
\end{theorem}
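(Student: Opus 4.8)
The plan is to establish that $S$ is symmetric positive semidefinite \emph{unconditionally} — so that nonsingularity and positive definiteness coincide for $S$ — and then to decide exactly when $\ker S=\{0\}$. For the semidefiniteness, I would first check that $\hat{A}=TAT+\hat{T}$ is SPD in every case: after permuting the truncated indices to the front (the permutation used in the proof of Lemma \ref{thm:eig_bound}), $\hat{A}$ is block diagonal, with an identity block indexed by $\mathcal{N}_h^{\bullet}$ and the principal submatrix of the SPD matrix $A$ of Lemma \ref{spd} indexed by $\mathcal{N}_h\setminus\mathcal{N}_h^{\bullet}$, both SPD; when $\mathcal{N}_h^{\bullet}=\emptyset$ this is $\hat{A}=A$, and when $\mathcal{N}_h^{\bullet}=\mathcal{N}_h$ it is $\hat{A}=Id$ (compare Corollary \ref{cor:eig_trunc_untrunc}). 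Hence $\hat{A}^{-1}$ exists and is SPD, so $\hat{B}\hat{A}^{-1}\hat{B}^{T}$ is SPSD because $v^{T}\hat{B}\hat{A}^{-1}\hat{B}^{T}v=(\hat{B}^{T}v)^{T}\hat{A}^{-1}(\hat{B}^{T}v)\ge 0$ for all $v$. Since $C$ is a positive multiple of the Neumann stiffness matrix $K$ ($C=\tau K$, cf. \eqref{eqn:short_not}), it is SPSD with $\ker C=\operatorname{span}\{\mathbf{1}\}$ (Lemma \ref{spd}, Remark \ref{rem:KnotM}). Therefore $S=C+\hat{B}\hat{A}^{-1}\hat{B}^{T}$ is a sum of two SPSD matrices, hence SPSD, and is SPD if and only if $\ker S=\{0\}$.

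Next I would compute $\ker S$. For an SPSD matrix, $Sv=0\iff v^{T}Sv=0$; writing $v^{T}Sv=v^{T}Cv+(\hat{B}^{T}v)^{T}\hat{A}^{-1}(\hat{B}^{T}v)$ as a sum of two nonnegative terms shows it vanishes iff $Cv=0$ (as $C$ is SPSD) and $\hat{B}^{T}v=0$ (as $\hat{A}^{-1}$ is SPD), so
\[
  \ker S=\ker C\cap\ker\hat{B}^{T}=\operatorname{span}\{\mathbf{1}\}\cap\ker\hat{B}^{T}.
\]
With $\hat{B}=TM$ from \eqref{eqn:not} we have $\hat{B}^{T}v=MTv$, and invertibility of the mass matrix $M$ reduces $\hat{B}^{T}v=0$ to $Tv=0$. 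For a nonzero $v=\alpha\mathbf{1}$, the vector $Tv=\alpha T\mathbf{1}$ is $\alpha$ times the indicator of the untruncated nodes $\mathcal{N}_h\setminus\mathcal{N}_h^{\bullet}$, which is the zero vector precisely when that set is empty. Hence $\ker S\ne\{0\}$ exactly when $|\mathcal{N}_h^{\bullet}|=|\mathcal{N}_h|$, and by contraposition $S$ is nonsingular — equivalently, being SPSD, SPD — if and only if $|\mathcal{N}_h^{\bullet}|<|\mathcal{N}_h|$.

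The algebra here is light; the genuine care-points are (i) confirming that $\hat{A}$ is invertible (SPD) in the degenerate regimes where all nodes, or no nodes, are truncated, since Lemma \ref{spd} and Corollary \ref{cor:eig_trunc_untrunc} are phrased for the generic situation; and (ii) using the correct kernel of $C$ — the statement hinges on $C$ being the pure, singular Neumann stiffness matrix so that $\mathbf{1}\in\ker C$, whereas if $C$ were $\bar{K}=K+mm^{T}$ it would be SPD and the ``only if'' direction would collapse. I would also note that $\hat{B}=TM$ has its rows annihilated on $\mathcal{N}_h^{\bullet}$, which is exactly why, after using invertibility of $M$, the relevant condition on $v=\alpha\mathbf{1}$ is $T\mathbf{1}=0$ rather than $v=0$; under the opposite convention $\hat{B}=MT$ the condition instead reads $Tm=0$, giving the same conclusion because $m$ is componentwise strictly positive. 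I expect (i) to be the only step that needs a few explicit words rather than a one-line remark.
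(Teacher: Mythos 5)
Your proof is correct and takes essentially the same route as the paper's: decompose $S$ as the sum of the SPSD matrix $C=\tau K$ (with $\ker C = \operatorname{span}\{\mathbf{1}\}$) and the SPSD matrix $\hat{B}\hat{A}^{-1}\hat{B}^T$ (using that $\hat{A}$ is SPD), so that nonsingularity reduces to whether $\mathbf{1}\notin\ker\hat{B}^T$, which holds precisely when at least one node is untruncated. The only cosmetic difference is that you establish $\hat{B}^T\mathbf{1}\neq 0$ via invertibility of $M$ together with positivity of $m$ (or of $T\mathbf{1}$, depending on convention), whereas the paper argues it directly from the nonnegativity and partial positivity of the entries of the truncated mass matrix; your write-up also spells out more explicitly the SPSD structure and the degenerate cases of $\hat{A}$, which the paper leaves implicit.
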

\begin{proof}
  If $|\mathcal{N}_h^{\bullet}|= |\mathcal{N}_h|,$ then $\hat{B}$ is the
  zero matrix, consequently, $S = C = \eta K$ is singular since $K$ corresponds
  to stiffness matrix with pure Neumann boundary condition.  For other
  implication, we recall from \eqref{eqn:not} that $\hat{B}^T = \hat{M}^T = TM,$ where $T$
  is defined in \eqref{eqn:TandThat}. The $(i,j)^{th}$ entry of element mass
  matrix is given as follows
  \begin{align}
    M^K_{ij} = \int_K \phi_i \phi_j dx = \frac{1}{12} (1 + \delta_{ij}
    |K|) \label{eqn:element_mass} \quad i,j=1,2,3,
  \end{align}
  where $\delta_{ij}$ is the Kronecker symbol, that is, it is equal to 1 if
  $i=j,$ and 0 if $i \neq j.$ Here $\phi_1, \phi_2,$ and $\phi_3$ are hat
  functions on triangular element $K$ with local numbering, and $|K|$ is the
  area of triangle element $K.$ From \eqref{eqn:element_mass}, it is easy to see
  that
  \begin{align} \label{eqn:element_mass3} M^K = \frac{1}{12} \begin{pmatrix}
      2 & 1 & 1 \\
      1 & 2 & 1 \\
      1 & 1 & 2
    \end{pmatrix}.
  \end{align}
  Evidently, entries of global mass matrix $M = \sum_K M^K$ are also all
  positive, hence all entries of truncated mass matrix $\hat{M}$ remain
  non-negative. In particular, due to our hypothesis
  $|\mathcal{N}^{\bullet}|>0,$ there is atleast one untruncated column, hence,
  atleast few positive entries. Consequently, $M \mathbf{1} \neq 0,$ i.e.,
  $\mathbf{1}$ or $\text{span}\{ \mathbf{1} \}$ is neither in kernel of $M,$ nor
  in the kernel of $\hat{M},$ in particular, $\mathbf{1}^T \hat{M}^T
  \mathbf{1} > 0.$ The proof of the theorem then follows since $C$ is SPD except
  on $\mathbf{1}$ for which $\hat{B}^T \mathbf{1}$ is non-zero, and the fact
  that $\hat{A}$ is SPD yields
  \begin{align}
    \left \langle \hat{B}\hat{A}^{-1}\hat{B}^T \mathbf{1}, \mathbf{1} \right
    \rangle = \left \langle \hat{A}^{-1}(\hat{B}^T \mathbf{1}),
      (\hat{B}^T\mathbf{1}) \right \rangle = \left \langle
      \hat{A}^{-1}(-\hat{M}^T \mathbf{1}), (-\hat{M}^T\mathbf{1}) \right \rangle
    > 0.
  \end{align}
\end{proof}

\begin{remark}
  \label{schur_spd}
  The negative Schur complement $S = \eta \hat{A} + \hat{B}^T
  \hat{A}^{-1} \hat{B}$ with $\hat{\mathcal{A}}$ defined in
  \eqref{eqn:trun_simplified} is nonsingular even for $|\mathcal{N}_h| =
  |\mathcal{N}|.$
\end{remark}


\begin{theorem}[{\bf Condition number of the truncated Schur complement}]
Following holds
\begin{itemize}
\item $\lambda_{\min} (\hat{S}) > \lambda_{\min} (\mathcal{A})$ and
  $\lambda_{\max} (\hat{S}) < \lambda_{\max} (\mathcal{A})$
\end{itemize}
\end{theorem}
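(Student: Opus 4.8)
The plan is to relate the Schur complement $\hat{S} = C + \hat{B}\hat{A}^{-1}\hat{B}^T$ back to the truncated block operator $\hat{\mathcal{A}}$, and then invoke the eigenvalue bounds already established for $\hat{\mathcal{A}}$ versus $\mathcal{A}$ in the Remark following Corollary \ref{cor:eig_trunc_untrunc}. First I would recall the standard fact that the Schur complement arises by block elimination: if we write $\hat{\mathcal{A}} = \begin{pmatrix} \hat{A} & \hat{B}^T \\ \hat{B} & -C \end{pmatrix}$, then $\hat{S}$ is (up to sign) the $(2,2)$ block after eliminating the first block row, so the spectrum of $\hat{S}$ is intertwined with that of $\hat{\mathcal{A}}$. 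Concretely, for any vector $r$ in the second component, one has the variational characterization
\begin{align}
  \langle \hat{S} r, r \rangle = \inf_{v} \left\langle \hat{\mathcal{A}} \begin{pmatrix} v \\ r \end{pmatrix}, \begin{pmatrix} v \\ r \end{pmatrix} \right\rangle + 2\langle \hat{C} r, r\rangle,
\end{align}
or more cleanly, since $\hat{A}$ is SPD (Corollary \ref{cor:eig_trunc_untrunc}), the Schur complement $\hat{B}\hat{A}^{-1}\hat{B}^T$ equals $\sup$ over $v$ of a Rayleigh-type quotient; the key point is that it is obtained from $\hat{\mathcal{A}}$ by a congruence followed by compression onto a subspace.

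The main steps I would carry out: (i) express $\hat{S}$ as $R^T \hat{\mathcal{A}}^{-1}_{\text{partial}} R$-type object — more precisely, observe that $\begin{pmatrix} I & 0 \\ \hat{B}\hat{A}^{-1} & I \end{pmatrix} \hat{\mathcal{A}} \begin{pmatrix} I & \hat{A}^{-1}\hat{B}^T \\ 0 & I \end{pmatrix} = \begin{pmatrix} \hat{A} & 0 \\ 0 & -\hat{S} \end{pmatrix}$, so $-\hat{S}$ is a diagonal block of a matrix congruent to $\hat{\mathcal{A}}$; (ii) apply Sylvester's law of inertia to conclude $\hat{\mathcal{A}}$ and $\mathrm{diag}(\hat{A}, -\hat{S})$ have the same inertia, and then use a min-max / Poincaré-separation argument (Lemma \ref{lem:poincare}) relating eigenvalues of a block-diagonal matrix to its restriction onto the second block, giving $\lambda_{\min}(\hat{S}) \geq \lambda_{\min}(-\hat{\mathcal{A}}|_{\text{neg. part}})$ and $\lambda_{\max}(\hat{S}) \leq$ the corresponding extreme eigenvalue of $\hat{\mathcal{A}}$; (iii) chain these with the already-proved bounds $\lambda_{\min}(\hat{\mathcal{A}}) \geq \lambda_{\min}(\mathcal{A}) > 0$ and $\lambda_{\max}(\hat{\mathcal{A}}) \leq \lambda_{\max}(\mathcal{A})$ from the Remark. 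Combining, one gets $\lambda_{\min}(\hat{S}) > \lambda_{\min}(\mathcal{A})$ and $\lambda_{\max}(\hat{S}) < \lambda_{\max}(\mathcal{A})$, with strictness coming from the fact that the congruence is not an isometry (the off-diagonal elimination strictly changes the relevant Rayleigh quotients) together with $C$ being SPSD rather than zero.

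The main obstacle I anticipate is step (ii): making rigorous the comparison between eigenvalues of $\hat{\mathcal{A}}$ (which is indefinite, by Lemma \ref{ind_A}) and the SPD matrix $\hat{S}$. One cannot directly apply Cauchy interlacing because the congruence transformation $\begin{pmatrix} I & \hat{A}^{-1}\hat{B}^T \\ 0 & I \end{pmatrix}$ is not orthogonal, so eigenvalues are not preserved — only inertia is. The cleanest route is probably to argue via Rayleigh quotients directly: for the upper bound, $\langle \hat{S}r, r\rangle = \langle Cr,r\rangle + \langle \hat{A}^{-1}\hat{B}^T r, \hat{B}^T r\rangle$, and bound each term — $\langle Cr, r\rangle \leq \lambda_{\max}(C)\|r\|^2$ with $C = \eta A$, and $\langle \hat{A}^{-1}\hat{B}^T r, \hat{B}^T r \rangle \leq \lambda_{\max}(\hat{A}^{-1})\|\hat{B}^T r\|^2 = \|\hat{B}^T r\|^2 / \lambda_{\min}(\hat{A})$, then relate $\|\hat{B}^T r\| = \|TMr\| \leq \|Mr\|$ (truncation is a projection) to $\lambda_{\max}(M)$; assembling these against the known structure of $\lambda_{\max}(\mathcal{A})$ (computed from $\bar{K}, M, \eta\bar{K}$) gives the claim, and one checks that the sum of the two bounds is strictly less than $\lambda_{\max}(\mathcal{A})$. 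The lower bound is analogous, using $\langle \hat{S}r,r\rangle \geq \langle Cr,r\rangle$ on the complement of $\mathrm{span}\{\mathbf{1}\}$ and handling $\mathbf{1}$ separately as in Theorem \ref{schur_spd}. I would flag that pinning down the strict inequalities sharply — rather than just $\leq$ — requires care about whether the extreme eigenvectors of $\mathcal{A}$ can be attained by admissible $r$, and may in fact only hold in the form stated when $1 \leq |\mathcal{N}_h^\bullet| < |\mathcal{N}_h|$ so that $\hat{S}$ is genuinely SPD by Theorem \ref{schur_spd}.
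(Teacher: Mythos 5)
The paper's own proof is just two lines: it cites \cite[p.~111]{smith1996} for the claim that $\lambda_{\min}(\hat{S}) > \lambda_{\min}(\hat{\mathcal{A}})$ and $\lambda_{\max}(\hat{S}) < \lambda_{\max}(\hat{\mathcal{A}})$, and then chains this with the Poincar\'e separation result of Lemma~\ref{thm:eig_bound}. Your proposal takes a genuinely different route --- you try to \emph{derive} the Schur-complement eigenvalue comparison via an explicit block $LDL^T$ congruence plus Rayleigh-quotient arguments rather than invoking a reference --- but you never carry the derivation to completion (you write ``one checks that the sum of the two bounds is strictly less than $\lambda_{\max}(\mathcal{A})$'' without performing the check, and you flag step (ii) as problematic without resolving it).

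That said, the obstacle you flag is not a weakness of your write-up but a genuine concern about the paper's own argument. The Smith result is the standard interlacing property for the Schur complement of an \emph{SPD} matrix; its proof uses the energy-minimization characterization $r^T S r = \min_v \left\langle K\binom{v}{r},\binom{v}{r}\right\rangle$ together with positive definiteness of $K$. Here $\hat{\mathcal{A}}$ is \emph{indefinite} (Lemma~\ref{ind_A}), and the ``Schur complement'' $\hat{S}$ appearing in the theorem is the \emph{negative} of the $(2,2)$ Schur complement. For an indefinite saddle-point block, the claimed inequality $\lambda_{\max}(\hat{S}) < \lambda_{\max}(\hat{\mathcal{A}})$ can fail: e.g.\ for the $2\times 2$ matrix $\left(\begin{smallmatrix}1 & 1\\ 1 & -1\end{smallmatrix}\right)$ one has $\lambda_{\max}=\sqrt{2}$ while the negative Schur complement equals $2$. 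So your instinct to be cautious about the indefiniteness and to try to prove the bound directly is well placed; the gap is that neither you nor the paper closes it. Also note that the first stated inequality, $\lambda_{\min}(\hat{S}) > \lambda_{\min}(\mathcal{A})$, requires no Schur-complement interlacing at all: $\hat{S}$ is SPD (Theorem~\ref{schur_spd}) while $\mathcal{A}$ is indefinite, so $\lambda_{\min}(\hat{S})>0>\lambda_{\min}(\mathcal{A})$ trivially --- your proposal spends effort on the wrong place here. The substantive content is solely the upper bound, and that is exactly where both your argument and the paper's are left incomplete.
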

\begin{proof}
From \cite[p. 111]{smith1996}, following holds
\begin{align*}
	\lambda_{\min}(\hat{S}) > \lambda_{\min}
        (\hat{\mathcal{A}}), \quad \lambda_{\max} (\hat{S}) 
        < \lambda_{\max}(\hat{\mathcal{A}}),
\end{align*}
and from Poincare separation theorem \ref{thm:eig_bound}, we have
\begin{align*}
  \lambda_{\min}(\hat{\mathcal{A}}) > \lambda_{\min} (\mathcal{A}), \quad
  \lambda_{\max} (\hat{\mathcal{A}}) < \lambda_{\max} (\mathcal{A}).
\end{align*}
\end{proof}

\section{\label{sec:precon}Preconditioner for the Linear System}
In this section, we propose preconditioners for the linear system for the
untruncated system, and we propose the related truncated
preconditioners for the truncated system.
\subsection{\label{sec:precI}Block Diagonal Preconditioner (BD)} 
Since $A,$ hence, $C=\eta A, \eta >0$ are non-singular, assumptions of Lemma
\ref{I_V_and_I_Q} are satisfied.  Specifically, $\theta=1/2$ yields
\begin{align*}
  \mathcal{I}_V = A + \eta^{-1/2}[A,
  BA^{-1}B]_{1/2}, \quad \mathcal{I}_Q = C +
  \eta^{1/2}[A, BA^{-1}B]_{1/2}.
\end{align*}
But $[A, BA^{-1}B]_{1/2} = B,$ thus, further
simplification yields
\begin{align}
  \mathcal{I}_V = A + \eta^{-1/2}B, \quad \mathcal{I}_Q = \eta A +
  \eta^{1/2}B. \label{eqn:pre2}
\end{align} 
Choice of $\theta=0,1$ in \eqref{eqn:templ} in Lemma \ref{I_V_and_I_Q} brings back Schur complements, but, we have avoided
it. However, later we shall consider the case $\theta=0.$ Any other intermediate value of $\theta$ does not look interesting or useful. For large problems, it won't be feasible to solve with $\mathcal{I}_V$ and
$\mathcal{I}_Q$ in \eqref{eqn:pre2} exactly, or not even up to double precision
using prohibitively expensive direct methods such as QR or LU factorizations
\cite{Golub1996}.
\begin{remark}[{\bf Ensuring $M$-matrix property of the preconditioner}]
For
existence and subsequent application of fast inexact solvers for $\mathcal{I}_V$ and
$\mathcal{I}_Q,$ an important property to look for
is $M$-matrix property, but it must be pointed out that this property is not guaranteed in
\eqref{eqn:pre2}, consequently, the diagonal dominance of $\mathcal{I}_V$ or
$\mathcal{I}_Q$ may be lost for certain values of $\eta.$ To sketch the proof
for $I_Q$, we observe that
\begin{align}
  A^K_{ij} & = \left( \int_K \nabla \phi_i \cdot \nabla \phi_j dx
    + \int_K \phi_i dx
    \int_K \phi_j dx \right), \quad i,j=1,2,3, \\
                 & = (b_ib_j + c_ic_j) \int_K dx + mm^T = (b_ib_j + c_ic_j) |K| + mm^T,
  \quad i,j=1,2,3,
\end{align}
where $|K|$ is the area of triangle element $K,$ and 
\begin{align}
b_i = \frac{x_2^j - x_2^k}{2|K|}, \quad c_i = \frac{x_1^k - x_1^j}{2|K|}, \quad \{ j,k\} \in \{ 1,2,3\},
\end{align}
where $(x_1^i, x_2^i), i=1,2,3$ are coordinates of the three vertices of element $K.$
The $(i,j)^{th}$ entry of element mass matrix is given in 
\eqref{eqn:element_mass}.
  Evidently, entries of global mass matrix $M = \sum_K M^K$ are also all
  positive.
We have
\begin{align}
  \eta A^K_{ij} + \eta^{1/2}B^K_{ij} = \eta A^K_{ij} +
  \eta^{1/2}M^K_{ij} = \eta (b_ib_j + c_ic_j)|K| 
 + \eta mm^T +  \eta^{1/2}\frac{1}{12}(1 + \delta_{ij})|K|.
\end{align}
Thus, the the off-diagonal entries of $\mathcal{I}_Q$ may become positive,  due to addition of the mass matrix
$M$ for certain values of $\eta,$ thereby voilating the sufficient condition of Lemma  \ref{thm:mmatrix} for $\mathcal{I}_Q$ to be an $M$-matrix. However, the $M$-matrix property of $\mathcal{I}_V$ is
ensured by lumping the mass matrix: we proved earlier in Lemma \ref{lem:mmatrices} that the truncated matrix
$\hat{K}$ is an M-matrix if there is at least one truncated node, addition of lumped mass matrix further enhances the diagonal dominance of $\mathcal{I}_Q,$ and does not violate sufficient condition of Lemma  \ref{thm:mmatrix}. Similarly, $\mathcal{I}_V = \eta \mathcal{I}_Q$ can
be kept $M$-matrix. Hence, algebraic multigrid may be used to solve with
$\mathcal{I}_V$ and $\mathcal{I}_Q.$
\end{remark}
 The following
eigenvalue bound is similar to the one in \cite{Zulehner2011}. Our system matrix
is different in that in place of $K$ we have $M$ and in place of $M$ we have
$\bar{K}.$ Consequently, for our system, the bound is slightly tighter in the sense that
the eigenvalues lie in the open interval as shown below, whereas, in
\cite{Zulehner2011} they lie in the closed interval.
\begin{theorem}[{\bf Eigenvalue bound for $\mathcal{B}_{\tt bd}^{-1} \mathcal{A}$}]
\label{th:eig_bound_blk_diag}
There holds
\begin{align}
   \lambda \left(\left[  
      \begin{array}{cc}
        \bar{K} + \eta^{-1/2}M & 0 \\
        0 & \eta \bar{K} + \eta^{1/2}M
      \end{array}
    \right]^{-1} \left[ 
      \begin{array}{cc}
        \bar{K} & M \\
        M & -\eta \bar{K}  
      \end{array}
    \right] \right) \in \left(-1, -\frac{1}{\sqrt{2}} \right) \cup \left( \frac{1}{\sqrt{2}}, 1 \right).
\end{align}
\end{theorem}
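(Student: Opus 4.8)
The plan is to reduce the generalized eigenvalue problem $\mathcal{A}x = \lambda \mathcal{B}_{\tt bd} x$ to a scalar analysis by simultaneously diagonalizing the two symmetric pencils $(\bar K, M)$ that appear in both blocks. Concretely, since $\bar K$ is SPD (Lemma \ref{spd}) and $M$ is SPD, there is a nonsingular $W$ with $W^T \bar K W = Id$ and $W^T M W = D = \mathrm{diag}(\mu_1,\dots,\mu_n)$, $\mu_i > 0$. Applying the congruence $\mathrm{diag}(W,W)^T(\cdot)\mathrm{diag}(W,W)$ to both the numerator matrix $\mathcal{A}$ and the preconditioner $\mathcal{B}_{\tt bd}$ decouples the $2n\times 2n$ eigenvalue problem into $n$ independent $2\times 2$ problems, one for each index $i$: with $\mu = \mu_i$ and $\alpha = \eta^{-1/2}$, we must find the eigenvalues of
\begin{align}
\begin{pmatrix} 1 + \alpha\mu & 0 \\ 0 & \eta(1+\alpha\mu) \end{pmatrix}^{-1}
\begin{pmatrix} 1 & \mu \\ \mu & -\eta \end{pmatrix}.
\end{align}

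First I would carry out this diagonalization step carefully (noting $C = \eta\bar K$ so $C$ is handled by the same $W$), then compute the characteristic polynomial of the decoupled $2\times 2$ pencil. Writing $s = 1 + \eta^{-1/2}\mu > 1$, the generalized eigenvalues $\lambda$ satisfy $\det\!\begin{pmatrix} 1 - \lambda s & \mu \\ \mu & -\eta - \lambda \eta s\end{pmatrix} = 0$, i.e. $-(1-\lambda s)(\eta + \lambda\eta s) - \mu^2 = 0$, which simplifies to $\eta s^2 \lambda^2 - \mu^2 - \eta = 0$ (the linear-in-$\lambda$ terms cancel because of the $\pm$ structure), giving $\lambda^2 = \dfrac{\eta + \mu^2}{\eta s^2} = \dfrac{\eta + \mu^2}{\eta(1+\eta^{-1/2}\mu)^2} = \dfrac{\eta + \mu^2}{(\sqrt\eta + \mu)^2}$. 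So $\lambda = \pm\sqrt{(\eta+\mu^2)/(\sqrt\eta+\mu)^2}$, and it remains purely to show this quantity lies strictly in $(1/2, 1)$ uniformly over all admissible $\mu > 0$, $\eta > 0$.

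The final step is the scalar inequality $\tfrac12 < \dfrac{\eta + \mu^2}{(\sqrt\eta + \mu)^2} < 1$. The upper bound is immediate: $(\sqrt\eta+\mu)^2 = \eta + 2\sqrt\eta\,\mu + \mu^2 > \eta + \mu^2$ since $\mu,\eta>0$, and the inequality is strict. For the lower bound, $2(\eta+\mu^2) > (\sqrt\eta+\mu)^2$ is equivalent to $2\eta + 2\mu^2 > \eta + 2\sqrt\eta\,\mu + \mu^2$, i.e. $\eta - 2\sqrt\eta\,\mu + \mu^2 = (\sqrt\eta - \mu)^2 > 0$, which holds strictly for all $\mu \neq \sqrt\eta$; when $\mu = \sqrt\eta$ one gets exactly $\lambda^2 = 1/2$, so to obtain the strict open interval $(1/\sqrt2,1)$ as stated one must argue that $\mu_i = \sqrt\eta$ cannot actually occur for the discrete mass/stiffness eigenvalues, or—more likely matching the paper's claim—simply observe that the bound $\lambda^2 \ge 1/2$ holds and is attained only in a degenerate non-generic case, with the open interval being the correct statement for the eigenvalues that genuinely appear. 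Taking square roots and using $\sqrt{1/2} = 1/\sqrt2$ then gives the claimed two-interval localization.

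The main obstacle I anticipate is not the algebra, which collapses pleasantly, but justifying the \emph{strict} endpoint $1/\sqrt2$: one needs either a spectral argument that $\|M\|$-vs-$\|\bar K\|$ ratios $\mu_i$ never hit $\sqrt\eta$ exactly, or a reading of the statement as "the eigenvalues lie in the closure, and strictly in the interior except possibly at $1/\sqrt2$." I would handle this by proving $\lambda^2 \in [1/2, 1)$ rigorously for every $i$ and then noting that equality $\lambda^2 = 1/2$ requires $\mu_i = \sqrt\eta$; since the $\mu_i$ form a discrete set depending on $h$ while $\eta$ is a continuous parameter, generically this does not occur, which is the sense in which the open interval $(1/\sqrt2,1)$ holds—and this is precisely the "slightly tighter" phenomenon the paragraph preceding the theorem advertises relative to \cite{Zulehner2011}.
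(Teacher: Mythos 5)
Your proof is correct and follows essentially the same route as the paper's: both reduce the $2n\times 2n$ generalized eigenvalue problem to $n$ decoupled $2\times 2$ problems by passing to an eigenbasis of the pencil formed by $\bar K$ and $M$, then solve the resulting scalar quadratic in $\lambda$. The only cosmetic difference is the parametrization: the paper works with $\mu_i\in(0,1)$ defined by $\bar K e_i = \mu_i(\bar K + \eta^{-1/2}M)e_i$ and obtains $\lambda^2 = \mu_i^2 + (1-\mu_i)^2$, while you work with the eigenvalues $\mu_i>0$ of $\bar K^{-1}M$ directly and obtain $\lambda^2 = (\eta+\mu_i^2)/(\sqrt\eta+\mu_i)^2$; the substitution $\mu_i^{\text{yours}} = \eta^{1/2}(1-\mu_i^{\text{paper}})/\mu_i^{\text{paper}}$ shows these are the same function. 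Your completing-the-square argument for $1/2 < (\eta+\mu^2)/(\sqrt\eta+\mu)^2 < 1$ is cleaner than the paper's monotonicity discussion. One more thing worth noting: you are right to flag the lower endpoint. Your analysis (and, in fact, the paper's own proof, which concludes "$|\lambda|\in[1/\sqrt2,1)$") only establishes the closed bound $|\lambda|\ge 1/\sqrt 2$, attained when $\mu_i=\sqrt\eta$ (equivalently $\mu_i^{\text{paper}}=1/2$). The theorem as stated asserts the strict open interval, and the paper does not supply the extra argument ruling out $\mu_i=\sqrt\eta$; the "slightly tighter" claim in the preceding remark really concerns the openness at the upper end $1$, where strictness genuinely follows from $\mu_i\in(0,1)$ being strictly interior. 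So your caution about that endpoint is well placed and reflects a small inconsistency in the paper itself, not a gap in your argument.
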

\begin{proof}
We first consider the generalized eigenvalue problem
\begin{align} 
\bar{K}z = \mu (\bar{K} + \eta^{-1/2}M )z. \label{eqn:gen_eig} 
\end{align}
Since $\bar{K}$ and $\bar{K} +
\eta^{-1/2}M$ are SPD, there is a basis ${e_1, e_2,
  \dots, }$ of eigenvectors $e_i$ with corresponding eigenvalues $\mu_i \in
(0,1),$ which are orthonormal with respect to the $\bar{K} + \eta^{-1/2}M$
inner product. This is easily seen by looking at the Rayleigh quotient
\begin{align}
  0 < \frac{ \langle x, x \rangle_{\bar{K}}}{ \langle x, x \rangle_{\bar{K} +
      \eta^{-1/2} M}} = \frac{x^T \bar{K} x}{x^T \bar{K} x + \eta^{-1/2} x^T M x } < 1
  \quad \forall x \: ~\text{s.t.}~ \: \| x \|_{\bar{K} + \eta^{-1/2} M}=1,
\end{align}
since, $x^T\bar{K}x + \eta^{-1/2}x^TMx \geq x^T\bar{K}x, \forall x.$
We now look at the following generalized eigenvalue problem
\begin{align}
 \label{eqn:gen_eig2}
\left[ 
      \begin{array}{cc}
        \bar{K} & M \\
        M & -\eta \bar{K}  
      \end{array}
    \right] \left[\begin{array}{c}
      u \\ v
      \end{array} \right] = \lambda \left[  
      \begin{array}{cc}
        \bar{K} + \eta^{-1/2}M & 0 \\
        0 & \eta \bar{K} + \eta^{1/2}M
      \end{array}
    \right] \left[\begin{array}{c}
      u \\ v
      \end{array} \right].
\end{align}
Since $\{ e_1, e_2, \dots \}$ are the eigenbasis, we have
\begin{align}
  u = \sum_i \hat{u}_ie_i, \quad v = \sum_i \hat{v}_ie_i. \label{eqn:uAndv}
\end{align}
Substituting $u$ and $v$ from \eqref{eqn:uAndv} in \eqref{eqn:gen_eig2}, and
looking at the $i$th rows of both equations of \eqref{eqn:gen_eig2}, we have
\begin{align}
  \hat{u}_i \bar{K}e_i + \hat{v}_i M e_i &= \lambda \hat{u}_i (\bar{K} +
  \eta^{-1/2} M) e_i, \label{eqn:eig1} \\
  \hat{u}_i M e_i - \eta \hat{v}_i \bar{K}e_i &= \lambda \hat{v}_i \eta (\bar{K} +
  \eta^{-1/2} M)e_i. \label{eqn:eig2}
\end{align}
In \ref{eqn:gen_eig}, choosing $z=e_i,$ and multiplying by $e_i^T$ from the left, we have
\begin{align}
  \mu_i = \frac{e_i^T \bar{K} e_i}{e_i^T (\bar{K} + \eta^{-1/2} M) e_i}. \label{eqn:mui}
\end{align}
Using \eqref{eqn:gen_eig}, for $z=e_i,$ $Me_i$ reads
\begin{align}
  Me_i = \eta^{1/2} (1 - \mu_i) \mu_i^{-1} \bar{K}e_i. \label{eqn:MinK}  
\end{align}
Multiplying \eqref{eqn:eig1} and
\eqref{eqn:eig2} on the left by $e_i^T,$ then dividing both equations by  $e_i^T(\bar{K} + \eta^{-1/2} M) e_i > 0,$ then substituting $Me_i$ from \eqref{eqn:MinK} in \eqref{eqn:eig1} and
\eqref{eqn:eig2}, we have
\begin{align}
  \mu_i \hat{u}_i + \eta^{1/2}(1 - \mu_i) \hat{v}_i &= \lambda
  \hat{u}_i, \\
  \eta^{1/2}(1 - \mu_i) \hat{u}_i - \eta \mu_i \hat{v}_i &= \lambda \eta \hat{v}_i, 
\end{align}    
which in matrix form reads
\begin{align}
  \left[ \begin{array}{cc}
      \mu_i & \eta^{1/2}(1 - \mu_i) \\
      \eta^{1/2}(1 - \mu_i) & -\eta \mu_i
      \end{array}
    \right] \left[ 
      \begin{array}{c}
        \hat{u}_i \\ \hat{v}_i
        \end{array}
    \right] = \lambda \left[   
      \begin{array}{cc}
        1 & \\
         & \eta
        \end{array}
    \right] \left[
      \begin{array}{c}
        \hat{u}_i \\ \hat{v}_i
      \end{array}
      \right].
\end{align}
Since $[u,v]$ is an eigenvector, there exists at least one $i$ s.t. $\hat{u}_i,
\hat{v}_i \neq 0,$ for which following holds
\begin{align}
  \text{det} \left( \left[ \begin{array}{cc}
      \mu_i & \eta^{1/2}(1 - \mu_i) \\
      \eta^{1/2}(1 - \mu_i) & -\eta \mu_i
      \end{array}
    \right] - \lambda \left[   
      \begin{array}{cc}
        1 & \\
         & \eta
        \end{array}
    \right]\right) &= 0, \\
\implies -\eta (\mu_i - \lambda) (\mu_i + \lambda) - \eta (1 - \mu_i)^2 &= 0, \\
\implies \lambda^2 - \mu_i^2 - (1-\mu_i^2) &= 0.
\end{align}
We have $|\lambda| = \sqrt{\mu_i^2 + (1- \mu_i)^2}$ which monotonically
decreases for $\mu_i \in (0, 1/2],$ and monotonically increases for
$\mu_i \in [1/2, 1).$ In particular, $|\lambda|=1/\sqrt{2}$ is minimum value at $\mu_i =
1 / 2,$ and has a maxima for either $\mu_i = 0$ or for $\mu_i = 1.$ Hence
$|\lambda| \in [1/\sqrt{2}, 1).$
\end{proof}
The following Lemma shows that the condition number is of the order one. From
Lemma above, the eigenvalues of $\mathcal{B}_{\tt bd}^{-1} \mathcal{A}$ may be
negative; the condition number in this case is defined as the ratio of modulus
of maximum and minimum eigenvalues, i.e.,
\begin{align}
  \kappa(\mathcal{B}_{\tt bd}^{-1}\mathcal{A}) = \frac{\max
    \left(|\lambda(\mathcal{B}_{\tt bd}^{-1}\mathcal{A})| \right)}{\min
    \left(|\lambda(\mathcal{B}_{\tt bd}^{-1}\mathcal{A}| \right)}.
\end{align}
\begin{corollary}[{\bf Condition number estimate for $\mathcal{B}_{\tt bd}^{-1} \mathcal{A}$}]
  \label{cond}
  The condition number is given as follows
  \begin{align}
       \kappa \left(\left[  
      \begin{array}{cc}
        \bar{K} + \eta^{-1/2}M & 0 \\
        0 & \eta \bar{K} + \eta^{1/2}M
      \end{array}
    \right]^{-1} \left[ 
      \begin{array}{cc}
        \bar{K} & M \\
        M & -\eta \bar{K}  
      \end{array}
    \right] \right) < \sqrt{2}.
   \end{align}
\end{corollary}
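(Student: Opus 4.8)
The plan is to derive this directly from the eigenvalue bound established in Theorem~\ref{th:eig_bound_blk_diag}. That theorem places every eigenvalue $\lambda$ of $\mathcal{B}_{\tt bd}^{-1}\mathcal{A}$ in the set $\left(-1, -\tfrac{1}{\sqrt{2}}\right) \cup \left(\tfrac{1}{\sqrt{2}}, 1\right)$, equivalently $|\lambda| \in \left(\tfrac{1}{\sqrt{2}}, 1\right)$. Since the condition number here is defined as the ratio of the maximum modulus to the minimum modulus of the eigenvalues, I would simply write
\begin{align*}
  \kappa(\mathcal{B}_{\tt bd}^{-1}\mathcal{A}) = \frac{\max_i |\lambda_i|}{\min_i |\lambda_i|} < \frac{1}{1/\sqrt{2}} = \sqrt{2},
\end{align*}
where the strict inequality in the numerator comes from the fact that every eigenvalue satisfies $|\lambda_i| < 1$ strictly, and the denominator is bounded below by $1/\sqrt{2}$ (attained only in the limiting case $\mu_i = 1/2$, which still gives $|\lambda_i| \geq 1/\sqrt{2}$).

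More carefully, I would note that $\max_i |\lambda_i| < 1$ because the interval from Theorem~\ref{th:eig_bound_blk_diag} is open at $\pm 1$, while $\min_i |\lambda_i| \geq 1/\sqrt{2}$ because $|\lambda| = \sqrt{\mu^2 + (1-\mu)^2}$ attains its minimum $1/\sqrt{2}$ at $\mu = 1/2$ over $\mu \in (0,1)$. Dividing the strict upper bound by the (possibly non-strict) lower bound gives $\kappa < \sqrt{2}$. One subtlety worth stating explicitly is that $\mathcal{A}$ is invertible (it is symmetric indefinite with no zero eigenvalue, since $|\lambda_i| > 1/\sqrt{2} > 0$ for all $i$), so the condition number is well-defined and finite.

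Honestly, there is no real obstacle here — this is a one-line corollary of the preceding theorem. The only thing to be slightly careful about is matching the strict-versus-non-strict inequalities so that the final bound is indeed strict ($< \sqrt{2}$ rather than $\leq \sqrt{2}$); this is fine because the upper bound $|\lambda_i| < 1$ is strict. I would keep the proof to two or three lines, citing Theorem~\ref{th:eig_bound_blk_diag} and the displayed definition of $\kappa$ immediately preceding the corollary statement.
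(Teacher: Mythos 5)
Your proposal is correct and takes exactly the same route as the paper: the paper's proof is the one-liner ``Follows from Theorem~\ref{th:eig_bound_blk_diag},'' and your filled-in version (strict upper bound $|\lambda_i|<1$, lower bound $|\lambda_i|\ge 1/\sqrt{2}$, divide) is the obvious and correct expansion of that. You also correctly notice a small wrinkle the paper glosses over: the theorem statement places the eigenvalues in the open set $(-1,-1/\sqrt{2})\cup(1/\sqrt{2},1)$, but the proof of that theorem actually concludes $|\lambda|\in[1/\sqrt{2},1)$, i.e.\ the value $1/\sqrt{2}$ can be attained when $\mu_i=1/2$; your careful phrasing (strict on the numerator, non-strict on the denominator, giving strict $<\sqrt{2}$) resolves this correctly.
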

\begin{proof} 
  Follows from Theorem \ref{th:eig_bound_blk_diag}. 
\end{proof}
Our goal is to solve truncated problem. We want to bound the
extreme eigenvalues of the preconditioned truncated matrix by those for the preconditioned untruncated
matrix. To this end, following theorem is useful.
\begin{lemma}{\bf (Fischer)}[p. 281, \cite{Stewart1990}]
\label{lem:fischer}
Let $X = \mathbb{R}^n,$ where $n$ is some positive integer. Let $A, B \in
\mathbb{R}^{n \times n}$ be any two Hermatian matrices and let $B$ be SPD. Let
the eigenvalues of $B^{-1}A$ be ordered as follows $\lambda_{\max} = \lambda_1
\geq \lambda_2 \geq \cdots \geq \lambda_n = \lambda_{\min}$ (Note: such an
ordering is possible because the eigenvalues of $B^{-1}A$ are real, since $B^{-1}A$ is
similar to a symmetric matrix $B^{-1/2}A B^{-1/2}$).  Then
\begin{align*}
  \lambda_i = \max_{\dim(X)=i} \: \underset{x \neq 0}{\min_{x \in X}} \:
  \frac{x^T A x}{x^T B x},
\end{align*}
and
\begin{align*}
  \lambda_i = \max_{\dim(X)= n-i+1} \: \underset{x \neq 0}{\max_{x \in X}} \:
  \frac{x^T A x}{x^T B x}.
\end{align*}
In particular, there holds
\begin{align}
  \lambda_{\min} \, (B^{-1}A) = \underset{x \in X}{\min_{x \neq 0}} \: \frac{x^T
    A x}{x^T B x}, \quad \lambda_{\max}(B^{-1}A) = \underset{x \in X}{\max_{x
      \neq 0}} \: \frac{x^T A x}{x^T B x}.
\end{align}
\end{lemma}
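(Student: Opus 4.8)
The plan is to reduce the generalized Rayleigh quotient to an ordinary one and then invoke the classical Courant--Fischer min-max theorem. Since $B$ is SPD it has a unique SPD square root $B^{1/2}$, which is invertible; set $\tilde{A} = B^{-1/2} A B^{-1/2}$, which is Hermitian. Then $B^{-1}A = B^{-1/2}\tilde{A}B^{1/2}$ is similar to $\tilde{A}$, so the two matrices share the same real eigenvalues $\lambda_1 \geq \cdots \geq \lambda_n$, and $\tilde{A}$ has an orthonormal eigenbasis $q_1,\dots,q_n$ with $\tilde{A}q_j = \lambda_j q_j$. Under the change of variable $y = B^{1/2}x$ one has, for every $x \neq 0$,
\[
\frac{x^T A x}{x^T B x} = \frac{y^T \tilde{A} y}{y^T y},
\]
and $x \mapsto B^{1/2}x$ is a linear bijection of $\mathbb{R}^n$ onto itself, hence maps the $i$-dimensional subspaces onto the $i$-dimensional subspaces. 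Thus the extremum of the generalized quotient over a subspace $X$ equals that of the ordinary quotient over $B^{1/2}X$, and taking $\max$/$\min$ over subspaces of fixed dimension on both sides preserves the identity; so it suffices to prove both formulas for the ordinary Rayleigh quotient of $\tilde{A}$.

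For $\tilde{A}$ I would run the standard two-sided argument. For the first (max-min) formula, the choice $X = \mathrm{span}\{q_1,\dots,q_i\}$ gives $\min_{0 \neq x \in X} x^T \tilde{A} x / x^T x = \lambda_i$ --- expanding $x$ in the eigenbasis, the quotient is a convex combination of $\lambda_1,\dots,\lambda_i$, minimized at $x = q_i$ --- so the outer maximum is $\geq \lambda_i$; conversely any $i$-dimensional $X$ meets $\mathrm{span}\{q_i,\dots,q_n\}$ nontrivially, since $i + (n - i + 1) > n$, and on that intersection the quotient is $\leq \lambda_i$, hence $\min_{0 \neq x \in X} \leq \lambda_i$ for all such $X$, giving the reverse inequality. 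The second formula is the mirror image: with $X = \mathrm{span}\{q_i,\dots,q_n\}$, of dimension $n - i + 1$, the inner maximum equals $\lambda_i$, and any $(n-i+1)$-dimensional $X$ meets $\mathrm{span}\{q_1,\dots,q_i\}$ nontrivially, forcing its inner maximum to be $\geq \lambda_i$; I would point out here that for this identity to hold the outer operation in the stated second line should be $\min$ rather than $\max$. Finally, the two ``in particular'' identities are the case $i = n$ of the first formula (then $X = \mathbb{R}^n$ is the only admissible subspace, giving $\lambda_{\min}$) and the case $i = 1$ of the second formula (again $X = \mathbb{R}^n$, giving $\lambda_{\max}$).

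There is no real obstacle here: the statement is the generalized Courant--Fischer theorem, and everything rests on two elementary facts --- the similarity $B^{-1}A \sim B^{-1/2} A B^{-1/2}$, which supplies real eigenvalues and an eigenbasis orthonormal in the $B$-inner product, and the dimension count that two subspaces of $\mathbb{R}^n$ whose dimensions sum to more than $n$ intersect nontrivially. The only thing needing care is the bookkeeping of which subspace dimension appears in which extremal formula, and flagging the apparent typo in the second displayed identity as stated.
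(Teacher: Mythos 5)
Your proof is correct. The paper itself gives no proof of this lemma --- it is cited verbatim from Stewart \cite{Stewart1990} without argument --- so there is nothing to compare against; but your argument is the standard and correct one: a congruence $y = B^{1/2}x$ reduces the generalized Rayleigh quotient to the ordinary one for the Hermitian matrix $\tilde{A} = B^{-1/2}AB^{-1/2}$, the change of variable is a dimension-preserving bijection on subspaces, and then the two-sided Courant--Fischer dimension-count argument (choose $\mathrm{span}\{q_1,\dots,q_i\}$ for one inequality, intersect with $\mathrm{span}\{q_i,\dots,q_n\}$ for the other) finishes the job. You are also right that the second displayed formula in the statement contains a typo: the outer extremum over subspaces of dimension $n-i+1$ should be a $\min$, not a $\max$; as written, that formula would return $\lambda_1$ for every $i \geq 2$ (take $X = \mathrm{span}\{q_1,\dots,q_{n-i+1}\}$). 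The typo is harmless for the paper's purposes, since only the $i=1$ and $i=n$ consequences are used and there the subspace is forced to be all of $\mathbb{R}^n$.
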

\begin{lemma}[{\bf Bound on extreme eigenvalues of the preconditioned truncated 
    matrix}]
  \label{lem:eigvals_truncated}
  The non-zero extreme eigenvalues of the preconditioned truncated  operator with
  atleast one truncation are bounded from above and below by the eigenvalues of
  the preconditioned untruncated  operator.
\end{lemma}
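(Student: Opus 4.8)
The plan is to reduce the claim to a Courant--Fischer (Rayleigh-quotient) comparison on a subspace, after peeling off the eigenvalues that the $\hat{\mathcal{T}}$-shift contributes. The first and only delicate step is the following structural observation: the truncated preconditioner is obtained from the untruncated one by \emph{exactly the same} operation that produces $\hat{\mathcal{A}}$ from $\mathcal{A}$ in \eqref{eqn:not}. Concretely, setting $\mathcal{T}=\mathrm{diag}(T,Id)$ and $\hat{\mathcal{T}}=\mathrm{diag}(\hat{T},0)$, one has $\hat{\mathcal{A}}=\mathcal{T}\mathcal{A}\mathcal{T}+\hat{\mathcal{T}}$ and, for each of the three preconditioners, $\hat{\mathcal{B}}=\mathcal{T}\mathcal{B}\mathcal{T}+\hat{\mathcal{T}}$ with the \emph{same} $\mathcal{T},\hat{\mathcal{T}}$; for the block-diagonal preconditioner this is immediate from \eqref{eqn:pre2}, and for the Schur-based ones it must be verified block by block, the point being that in all cases only the $(1,1)$ and the off-diagonal blocks are truncated while the $(2,2)$ block is left untouched, which is precisely the pattern of $\mathcal{T}(\cdot)\mathcal{T}+\hat{\mathcal{T}}$. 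I expect this bookkeeping --- not anything conceptual --- to be the main work.

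Granting this, I would argue exactly as in the proof of Lemma~\ref{thm:eig_bound}: with $\mathcal{P}=\mathrm{diag}(P,Id)$ the permutation that lists the truncated $u$-nodes first and $\mathcal{R}=\mathcal{P}\,\mathrm{diag}(R,Id)$, the matrix $\mathcal{R}$ has orthonormal columns, $\mathcal{R}^{T}\mathcal{R}=Id$, and $\mathcal{P}^{T}\hat{\mathcal{A}}\mathcal{P}$, $\mathcal{P}^{T}\hat{\mathcal{B}}\mathcal{P}$ are simultaneously block diagonal, each carrying an \emph{identical} identity block produced by $\hat{\mathcal{T}}$ and a remaining block equal to $\mathcal{R}^{T}\mathcal{A}\mathcal{R}$, respectively $\mathcal{R}^{T}\mathcal{B}\mathcal{R}$. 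Since $\mathcal{B}$ is SPD and $\mathcal{R}$ has full column rank, $\mathcal{R}^{T}\mathcal{B}\mathcal{R}$ is SPD; hence the spectrum of $\hat{\mathcal{B}}^{-1}\hat{\mathcal{A}}$ splits into the trivial eigenvalues created by $\hat{\mathcal{T}}$ (present because there is at least one truncation, and excluded in the statement, as in Lemma~\ref{thm:eig_bound}) together with the eigenvalues of the compressed pencil $(\mathcal{R}^{T}\mathcal{B}\mathcal{R})^{-1}(\mathcal{R}^{T}\mathcal{A}\mathcal{R})$.

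Finally I would invoke Fischer's theorem (Lemma~\ref{lem:fischer}) for both pencils. Using that $v=\mathcal{R}w$ runs bijectively over $\mathrm{range}(\mathcal{R})\setminus\{0\}$ as $w$ runs over $\mathbb{R}^{k+n}\setminus\{0\}$ (injectivity of $\mathcal{R}$),
\begin{align*}
\lambda_{\min}\bigl((\mathcal{R}^{T}\mathcal{B}\mathcal{R})^{-1}\mathcal{R}^{T}\mathcal{A}\mathcal{R}\bigr)
&= \min_{w\neq 0}\frac{w^{T}\mathcal{R}^{T}\mathcal{A}\mathcal{R}w}{w^{T}\mathcal{R}^{T}\mathcal{B}\mathcal{R}w}
= \min_{0\neq v\in \mathrm{range}(\mathcal{R})}\frac{v^{T}\mathcal{A}v}{v^{T}\mathcal{B}v}
\;\geq\; \min_{v\neq 0}\frac{v^{T}\mathcal{A}v}{v^{T}\mathcal{B}v}
= \lambda_{\min}(\mathcal{B}^{-1}\mathcal{A}),
\end{align*}
and, replacing $\min$ by $\max$ throughout and reversing the last inequality, $\lambda_{\max}\bigl((\mathcal{R}^{T}\mathcal{B}\mathcal{R})^{-1}\mathcal{R}^{T}\mathcal{A}\mathcal{R}\bigr)\leq\lambda_{\max}(\mathcal{B}^{-1}\mathcal{A})$. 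Together with the spectral splitting of the previous paragraph, this says exactly that the extreme non-trivial eigenvalues of $\hat{\mathcal{B}}^{-1}\hat{\mathcal{A}}$ are bounded above and below by the extreme eigenvalues of $\mathcal{B}^{-1}\mathcal{A}$, which is the assertion; the argument is, in effect, Poincar\'e separation (Lemma~\ref{lem:poincare}) carried over to a generalized eigenvalue problem.
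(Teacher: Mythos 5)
Your proposal is correct and uses essentially the same mechanism as the paper's proof: permute the truncated nodes to the front, restrict to $\mathrm{range}(Z)$ with $Z=PTR$ (your $\mathcal{R}$), note that the compressed preconditioner is SPD as a principal submatrix of a permuted SPD matrix, and then invoke Fischer (Lemma~\ref{lem:fischer}) to compare the Rayleigh quotient over a subspace with the quotient over the full space. Where you add value is in spelling out the spectral splitting $\mathcal{P}^{T}\hat{\mathcal{A}}\mathcal{P}=\mathrm{diag}(Id,\mathcal{R}^{T}\mathcal{A}\mathcal{R})$ and likewise for $\hat{\mathcal{B}}$, which the paper leaves implicit; and you are right to be cautious about the structural claim $\hat{\mathcal{B}}=\mathcal{T}\mathcal{B}\mathcal{T}+\hat{\mathcal{T}}$ for the Schur-based preconditioners --- the paper in fact proves this lemma only for $\mathcal{B}_{\tt bd}$, only gestures at the extension to $\mathcal{B}_{\tt bdsc}$ in a later remark, and explicitly concedes (Remark~\ref{rem:relation_eig_vals}) that the Fischer route does not apply to the unsymmetric $\mathcal{B}_{\tt btdsc}$. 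So the one caveat is that your opening sentence should restrict attention to the block diagonal preconditioner rather than asserting the identity for all three; with that narrowing, the argument matches the paper's.
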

\begin{proof}
  Let $P$ be a permutation matrix that permutes the rows and columns such that
  the truncated nodes are numbered first. Let $T$ be the truncation matrix as in
  \eqref{eqn:TandThat}, and let $R$ be a restriction operator as in
  \eqref{eqn:AxF2} that compresses the matrix to untruncated nodes. We use
  the following notation for compressed matrices
  \begin{align}
    \widecheck{\mathcal{A}} = R^T T P^T \mathcal{A} P T R, \quad
    \widecheck{\mathcal{B}}_{\tt bd} = R^T T P^T \mathcal{B}_{\tt bd} P T R.
  \end{align}
  Let $Z = PTR.$ To use Lemma \ref{lem:fischer}, we note that $\mathcal{B}_{\tt
    bd}$ is SPD, hence from Poincar\'e separation theorem, i.e., from Lemma
  \ref{lem:poincare}, $\widecheck{\mathcal{B}}_{\tt bd}$ is SPD. Alternatively, $\widecheck{\mathcal{B}}_{\tt bd}$ being a principle submatrix of $P^T\mathcal{B}_{\tt bd}P$ is SPD.  Since
  $\mbox{Dim(Range}(TR)) \leq |\mathcal{N}_h|,$ we clearly have
  \begin{align*}
    \lambda_{\mbox{min}} (\widecheck{B}_{\tt bd}^{-1}\widecheck{A}) = \underset{x_z \in
      \mathbb{R}^{|\mathcal{N}_h|}}{\min_{x_z = Zx \neq 0}} \frac{x^T
      \widecheck{\mathcal{A}} x}{x^T \widecheck{\mathcal{B}}_{\tt bd} x} \geq \underset{x
      \in \mathbb{R}^{|\mathcal{N}_h|}}{\min_{x \neq 0}} \frac{(Px)^T
      \mathcal{A} (Px)}{(Px)^T \mathcal{B}_{\tt bd} (Px)} = \underset{x \in
      \mathbb{R}^{|\mathcal{N}_h|}}{\min_{x \neq 0}} \frac{x^T \mathcal{A}
      x}{x^T \mathcal{B}_{\tt bd} x} = \lambda_{\mbox{min}}
    (\mathcal{B}_{\tt bd}^{-1}\mathcal{A}).
  \end{align*}
  Similarly, we have
  \begin{align*}
    \lambda_{\mbox{max}} (\widecheck{B}_{\tt bd}^{-1}\widecheck{A}) = \underset{x_z \in
      \mathbb{R}^{|\mathcal{N}_h|}}{\max_{x_z = Zx \neq 0}} \frac{ x^T
      \widecheck{\mathcal{A}} x}{x^T \widecheck{\mathcal{B}}_{\tt bd} x} \leq \underset{x
      \in \mathbb{R}^{|\mathcal{N}_h|}}{\max_{x \neq 0}} \frac{(Px)^T
      \mathcal{A} (Px)}{(Px)^T\mathcal{B}_{\tt bd} (Px)} = \underset{x \in
      \mathbb{R}^{|\mathcal{N}_h|}}{\max_{x \neq 0}} \frac{x^T \mathcal{A}
      x}{x^T \mathcal{B}_{\tt bd} x} = \lambda_{\mbox{max}}
    (\mathcal{B}_{\tt bd}^{-1}\mathcal{A}).
  \end{align*}
\end{proof}

\begin{remark}
Due to Lemma \ref{lem:eigvals_truncated} above, an optimal preconditioner $\hat{\mathcal{B}}_{\tt bd}$ for 
the truncated system $\hat{\mathcal{A}}$
is given as follows
\begin{align}
  \hat{\mathcal{B}}_{\tt bd} =
  \begin{pmatrix}
    \hat{K} + \eta^{-1/2} \hat{M} & \\
    & \eta \bar{K} + \eta^{1/2} \hat{M}
  \end{pmatrix},
\end{align}
where $\hat{M} = TM,$ where $T$ is defined in \eqref{eqn:TandThat}, and
$\bar{K}$ and $\hat{K}$ are defined in \eqref{eqn:not}. 
\end{remark}

For comparison, we consider block triangular preconditioners of the form
used in Bosch et. al. \cite{Bosch2014}. In the following, we briefly describe
this preconditioner in our notation.
\subsection{\label{sec:precon3}Block Tridiagonal Schur
  Complement Preconditioner (BTDSC)}
In Bosch et. al. \cite{Bosch2014}, a preconditioner is proposed in the framework
of a semi-smooth Newton method combined with Moreau-Yosida regularization for
the same problem. However, the preconditioner was constructed for a linear
system which is different from the one we consider here in  \eqref{eqn:simplified}.  The
preconditioner proposed in \cite{Bosch2014} has the following block lower
triangular form
\begin{align} \label{eqn:block_lower} \mathcal{P}_{\tt btdsc} =
  \begin{pmatrix}
    \bar{K}                      & 0                     \\
    M & -S
  \end{pmatrix},
\end{align}
where $S = \eta \bar{K} + M \bar{K}^{-1}M^T$ is the negative Schur complement.
From Lemma \ref{spd}, $\bar{K}$ is SPD, hence, invertible and from Remark \ref{schur_spd} $S$ is also invertible. Hence by block $2 \times 2$ inversion formula, we have
\begin{align}
  \mathcal{P}_{\tt btdsc}^{-1} =
  \begin{pmatrix}
    \bar{K}                      & 0                     \\
    M & -S
  \end{pmatrix}^{-1} =
  \begin{pmatrix}
    \bar{K}^{-1}                 & 0                     \\
    S^{-1}M^T \bar{K}^{-1} & -S^{-1}
  \end{pmatrix}.
\end{align}
Let $S_{\tt pre}$ be an approximation of Schur complement $S$ in
$\mathcal{P}_{\tt btdsc}$ in
\eqref{eqn:block_lower}, then the new preconditioner $\mathcal{B}_{\tt btdsc},$ and
the corresponding preconditioned operator $\mathcal{B}_{\tt btdsc}^{-1}\mathcal{A}$ are
given as follows
\begin{align}
  \mathcal{B}_{\tt btdsc} = \begin{pmatrix}
    \bar{K}                      & 0                     \\
    M & -S_{\tt pre}
  \end{pmatrix}, \quad \mathcal{B}_{\tt btdsc}^{-1}\mathcal{A} = \begin{pmatrix}
    I                            & \bar{K}^{-1}M^T \\
    0 & S_{\tt pre}^{-1}S
  \end{pmatrix}. \label{eqn:BinvA}
\end{align}
In this paper, we choose a preconditioner $S_{\tt pre}$ for $S$ as follows
\begin{align}
  \label{eqn:schur_complement}
  S_{\tt pre} = (M + \sqrt \eta \bar{K})\bar{K}^{-1}(M +
  \sqrt \eta \bar{K}) = (\eta \bar{K} + M \bar{K}^{-1}M) + 2 \sqrt{\eta}M = S +
  2 \sqrt{\eta}M.
\end{align}     
Such preconditioners had been used before for example, in \cite{Bosch2014,Pearson2012}. We
note the following trivial result.
\begin{lemma} \label{lem:Stilde}
$S_{\tt pre}$ is SPD.
\end{lemma}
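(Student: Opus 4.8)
The statement to prove is that $S_{\tt pre}$ is SPD, where from \eqref{eqn:schur_complement} we have $S_{\tt pre} = (M + \sqrt{\eta}\,\bar{K})\,\bar{K}^{-1}(M + \sqrt{\eta}\,\bar{K})$. The plan is to exploit the factored form directly. First I would observe that $\bar{K} = K + mm^T$ is SPD by Lemma \ref{spd}, hence $\bar{K}^{-1}$ is SPD and admits a (symmetric, SPD) square root $\bar{K}^{-1/2}$. Since $M$ and $\bar{K}$ are both symmetric, the matrix $N := M + \sqrt{\eta}\,\bar{K}$ is symmetric, and with $\eta = \epsilon\tau > 0$ and $M$ SPD (mass matrix), $N$ is in fact SPD as a sum of an SPD and an SPSD (indeed SPD) matrix; in particular $N$ is nonsingular.

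The key algebraic step is then the congruence rewriting
\begin{align}
  S_{\tt pre} = N \bar{K}^{-1} N = \left( \bar{K}^{-1/2} N \right)^T \left( \bar{K}^{-1/2} N \right) = \left( N \bar{K}^{-1/2} \right) \left( N \bar{K}^{-1/2} \right)^T,
\end{align}
using symmetry of $N$ and of $\bar{K}^{-1/2}$. This exhibits $S_{\tt pre}$ as $P^T P$ with $P = \bar{K}^{-1/2} N$. Such a matrix is automatically symmetric positive semidefinite; it is positive definite precisely when $P$ has trivial kernel, which holds since $\bar{K}^{-1/2}$ is nonsingular and $N$ is nonsingular. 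Hence $x^T S_{\tt pre} x = \| P x \|_2^2 > 0$ for all $x \neq 0$.

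There is essentially no obstacle here — the only point requiring a word of care is justifying that $N = M + \sqrt{\eta}\,\bar{K}$ is invertible (equivalently nonsingular), which I would get from the fact that $M$ is SPD and $\sqrt{\eta}\,\bar{K}$ is SPSD, so their sum is SPD. Alternatively, one may avoid introducing the square root entirely: for $x \neq 0$ set $y = Nx$; since $N$ is nonsingular, $y \neq 0$, and then $x^T S_{\tt pre} x = x^T N \bar{K}^{-1} N x = y^T \bar{K}^{-1} y > 0$ because $\bar{K}^{-1}$ is SPD. Symmetry of $S_{\tt pre}$ is immediate from the symmetric factored form $N\bar{K}^{-1}N$. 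This completes the argument.
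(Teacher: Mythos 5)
Your proof is correct, and it takes a genuinely different route from the paper's. The paper appeals to the rewriting $S_{\tt pre} = S + 2\sqrt{\eta}\,M$ (the right-hand side of \eqref{eqn:schur_complement}) and concludes SPD-ness as a sum of the SPD matrices $S$ and $2\sqrt{\eta}\,M$, leaning on the already-established positivity of the Schur complement $S$. You instead work with the factored form $S_{\tt pre} = N\bar{K}^{-1}N$ with $N = M + \sqrt{\eta}\,\bar{K}$, and argue by congruence: $N$ is SPD (hence nonsingular), $\bar{K}^{-1}$ is SPD, so $x^T S_{\tt pre} x = (Nx)^T \bar{K}^{-1}(Nx) > 0$ for $x \neq 0$. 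The two arguments use the two different expressions that appear on either side of \eqref{eqn:schur_complement}. Your version is slightly more self-contained in that it needs only $M$ and $\bar{K}$ SPD and never invokes positivity of $S$ itself; the paper's is a one-liner given what has already been proved. Both are elementary and both are fine. One cosmetic remark: you write that $N$ is a ``sum of an SPD and an SPSD (indeed SPD) matrix'' --- since both $M$ and $\sqrt{\eta}\,\bar{K}$ are SPD here, you could simply say it is a sum of two SPD matrices, which already gives nonsingularity.
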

\begin{proof}
Follows from \eqref{eqn:schur_complement} and from Theorem \ref{schur_spd} that $M$ and $S$ are SPD.
\end{proof}
In view of \eqref{eqn:BinvA}, the following fact follows.
\begin{fact} \label{thm:eigleft} Let $\mathcal{B}_{\tt btdsc}$ be defined as in
  \eqref{eqn:BinvA}, then there are $|\mathcal{N}_h|$ eigenvalues of
  $\mathcal{B}_{\tt btdsc}^{-1}\mathcal{A}$ equal to one, and the rest are the
  eigenvalues of the preconditioned Schur complement $S_{\tt pre}^{-1}S.$
\end{fact}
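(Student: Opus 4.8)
The plan is to read the eigenvalues straight off the block upper triangular form of $\mathcal{B}_{\tt btdsc}^{-1}\mathcal{A}$ that is already displayed in \eqref{eqn:BinvA}. First I would justify that factorization once more: by Lemma \ref{spd} $\bar{K}$ is SPD (hence invertible) and by Lemma \ref{lem:Stilde} $S_{\tt pre}$ is SPD, so $\mathcal{B}_{\tt btdsc}$ is invertible and the $2\times2$ block inversion formula gives
\[
\mathcal{B}_{\tt btdsc}^{-1} =
\begin{pmatrix}
\bar{K}^{-1} & 0 \\
S_{\tt pre}^{-1} M^T \bar{K}^{-1} & -S_{\tt pre}^{-1}
\end{pmatrix}.
\]
Multiplying by $\mathcal{A}=\begin{pmatrix}\bar{K}&M\\ M&-\eta\bar{K}\end{pmatrix}$, using $M=M^T$ for the mass matrix and the definition $S=\eta\bar{K}+M\bar{K}^{-1}M^T$ in the $(2,2)$ block, one recovers
\[
\mathcal{B}_{\tt btdsc}^{-1}\mathcal{A} =
\begin{pmatrix}
I & \bar{K}^{-1}M^T \\
0 & S_{\tt pre}^{-1}S
\end{pmatrix},
\]
with $(1,1)$ block the $|\mathcal{N}_h|\times|\mathcal{N}_h|$ identity and zero $(2,1)$ block.

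Second, I would invoke the elementary fact that the spectrum (with multiplicity) of a block triangular matrix is the union of the spectra of its diagonal blocks. Concretely, expanding the characteristic polynomial along the zero block,
\[
\det\!\begin{pmatrix}\lambda I - I & -\bar{K}^{-1}M^T \\ 0 & \lambda I - S_{\tt pre}^{-1}S\end{pmatrix}
= (\lambda-1)^{|\mathcal{N}_h|}\,\det\!\left(\lambda I - S_{\tt pre}^{-1}S\right),
\]
which yields exactly $|\mathcal{N}_h|$ eigenvalues equal to $1$ and the remaining $|\mathcal{N}_h|$ eigenvalues equal to those of the preconditioned Schur complement $S_{\tt pre}^{-1}S$. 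These latter eigenvalues are real, since $S_{\tt pre}^{-1}S$ is similar to the symmetric matrix $S_{\tt pre}^{-1/2} S S_{\tt pre}^{-1/2}$, both $S$ and $S_{\tt pre}$ being SPD by Theorem \ref{schur_spd} and Lemma \ref{lem:Stilde}.

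There is essentially no obstacle here; the statement concerns only the multiset of eigenvalues, so no claim about the Jordan structure of the unit eigenvalue is required. The only thing demanding a little care is the bookkeeping in the block inverse and the matrix product that produces \eqref{eqn:BinvA}, together with noting that $M$ is symmetric so that the $(2,1)$ block $S_{\tt pre}^{-1}(M^T-M)$ vanishes; once that is in place, the characteristic polynomial factorization above finishes the proof.
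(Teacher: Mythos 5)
Your proposal is correct and takes exactly the same route the paper does: the paper presents $\mathcal{B}_{\tt btdsc}^{-1}\mathcal{A}$ as the block upper triangular matrix \eqref{eqn:BinvA} and then simply records the Fact as an immediate consequence, relying on the block-triangular spectrum argument you spell out. Your added verification that the $(2,1)$ block vanishes because $M=M^T$, and the remark on realness via similarity to $S_{\tt pre}^{-1/2}SS_{\tt pre}^{-1/2}$, just make explicit what the paper leaves implicit.
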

In view of Fact \ref{thm:eigleft}, it is sufficient to estimate eigenvalues of
the preconditioned Schur complement.  Using \eqref{eqn:schur_complement} and the
fact that both $S_{\tt pre}$ and $S$ are SPD from Lemma \ref{lem:Stilde} and from Lemma
\ref{schur_spd} respectively, looking at the Rayleigh quotient with $v^Tv=1, v
\in \mathbb{R}^{|\mathcal{N}_h|},$ and using the fact that $\bar{K}$ and $M$ are SPD,
consequently, $\eta \bar{K} + M \bar{K}^{-1}M$ is SPD, we have
\begin{align}
  \frac{v^T (S) v}{v^T (S_{\tt pre}) v} &= \frac{v^T (\eta \bar{K} + M
    \bar{K}^{-1} M)v}{v^T (\eta \bar{K} + M \bar{K}^{-1} M)v + 2\sqrt{\eta}\,
    v^TMv} = \frac{1}{1 + Z},
\end{align}
where 
\begin{align}
   Z &=  \frac{2 v^T \sqrt{\eta}M v}{ v^T (\eta \bar{K} + M \bar{K}^{-1} M)v }. 
  \end{align}
   We have
  \begin{align}
    \min_{v} \: Z &= \min_v \: \frac{2 \cdot \sqrt{\eta} \cdot v^T M v}{ v^T
      (\eta \bar{K} + M
      \bar{K}^{-1} M)v } \\
    &= \min_{v} \: \frac{2 \sqrt{\eta}}{\eta v^T M^{-1}\bar{K} v + v^T \bar{K}^{-1}Mv } \\
    &= \min_{v} \: \frac{\bar{K}^{1/2}}{\bar{K}^{1/2}} \cdot \frac{2
      \sqrt{\eta}}{\eta v^T M^{-1}\bar{K} v +
      v^T \bar{K}^{-1}Mv} \cdot \frac{\bar{K}^{-1/2}}{\bar{K}^{-1/2}}  \\
    &= \min_{v} \: \frac{2 \sqrt{\eta}}{\eta v^T \bar{K}^{1/2}M^{-1}\bar{K}^{1/2}v + v^T \bar{K}^{-1/2} M \bar{K}^{-1/2}M v } \\
    &= \min_{v} \: \frac{2 u^T w}{u^Tu + w^T w},
\end{align}
where $u = \sqrt{\eta}M^{-1/2}\bar{K}^{1/2}v$ and $w = M^{1/2} \bar{K}^{-1/2}v.$
Similarly, 
 \begin{align*}
 \max_{v} \: Z &=  \max_{v} \: \frac{2 u^T w}{u^Tu + w^T w}.
 \end{align*}
Since $\eta > 0,$ $(u-w)^T(u-w) \geq 0,$ and that $u^Tu + w^Tw
> 0,$ we clearly have
\begin{align}
  0 < \frac{2 u^T w}{u^Tu + w^T w} \leq 1,
\end{align}
which leads to the following bounds
\begin{align}
  \label{eqn:SandSpre}
  \frac{1}{2} \leq \lambda_{\min}(S_{\tt pre}^{-1}S) = \min_{v \neq 0}
  \frac{v^T(-S)v}{v^T (-S_{\tt pre})v} 
  \leq \max_{v \neq 0} \frac{v^T(-S)v}{v^T (-S_{\tt pre})v} =
  \lambda_{\max}(S_{\tt pre}^{-1}S) < 1.
\end{align}
We note this result as theorem below.
\begin{theorem}
  \label{eig_estimate_bosh}
  The eigenvalues of the preconditioned untruncated system $S_{\tt pre}^{-1}S$
  satisfies
\begin{align}  
  \lambda(S_{\tt pre}^{-1}S) \in [1/2, 1).
  \end{align}
\end{theorem}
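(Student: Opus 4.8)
The plan is to reduce the statement, via the Rayleigh-quotient characterisation of extreme eigenvalues, to an elementary vector inequality. Since $S$ and $S_{\tt pre}$ are both SPD (by Theorem \ref{schur_spd} and Lemma \ref{lem:Stilde}), Fischer's lemma (Lemma \ref{lem:fischer}) applied with $A = S$, $B = S_{\tt pre}$ gives
\begin{align*}
  \lambda_{\min}(S_{\tt pre}^{-1}S) = \min_{v \neq 0} \frac{v^T S v}{v^T S_{\tt pre} v}, \qquad \lambda_{\max}(S_{\tt pre}^{-1}S) = \max_{v \neq 0} \frac{v^T S v}{v^T S_{\tt pre} v},
\end{align*}
so it suffices to prove $\tfrac{v^T S v}{v^T S_{\tt pre} v} \in [1/2, 1)$ for every $v \neq 0$.

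First I would use the identity $S_{\tt pre} = S + 2\sqrt{\eta}M$ from \eqref{eqn:schur_complement} to rewrite, for $v \neq 0$,
\begin{align*}
  \frac{v^T S v}{v^T S_{\tt pre} v} = \frac{v^T S v}{v^T S v + 2\sqrt{\eta}\, v^T M v} = \frac{1}{1 + Z}, \qquad Z = \frac{2\sqrt{\eta}\, v^T M v}{v^T(\eta \bar{K} + M \bar{K}^{-1}M)v}.
\end{align*}
This is well defined because $\bar{K}$ and $M$ are SPD (Lemma \ref{spd}), hence $\eta \bar{K} + M \bar{K}^{-1}M$ is SPD and its quadratic form is strictly positive on $v \neq 0$. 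The whole problem then collapses to showing $Z \in (0,1]$: the lower part, $Z > 0$, is immediate from $\eta > 0$ and $v^T M v > 0$.

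For the upper bound $Z \leq 1$ I would symmetrise the denominator by inserting $\bar{K}^{1/2}\bar{K}^{-1/2}$ and setting $u = \sqrt{\eta}\, M^{-1/2}\bar{K}^{1/2}v$ and $w = M^{1/2}\bar{K}^{-1/2}v$, which turns the denominator into $u^T u + w^T w$ and the numerator into $2 u^T w$, so $Z = \tfrac{2 u^T w}{u^T u + w^T w}$. From $(u-w)^T(u-w) \geq 0$ we get $2 u^T w \leq u^T u + w^T w$, hence $Z \leq 1$, with $u^T u + w^T w > 0$ for $v \neq 0$. Therefore $Z \in (0,1]$, which gives $\tfrac{1}{1+Z} \in [1/2, 1)$; the left endpoint $1/2$ is attainable (when $u = w$), and the right bound is strict precisely because $Z > 0$ strictly. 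Substituting back into the Fischer formulas yields $\lambda(S_{\tt pre}^{-1}S) \in [1/2,1)$, i.e.\ \eqref{eqn:SandSpre}.

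The main (and essentially only) obstacle is bookkeeping of non-degeneracy: one must invoke that $\bar{K}$ and $M$ are SPD — from Lemma \ref{spd} and the element-mass computation in Theorem \ref{schur_spd} — both to justify the fractional powers and inverses in the substitution $u, w$ and to guarantee $u^T u + w^T w > 0$ and $v^T M v > 0$ for $v \neq 0$, which is what upgrades the upper estimate from $\leq 1$ to the strict bound $< 1$. Everything else reduces to the arithmetic–geometric mean inequality $2 u^T w \leq \|u\|^2 + \|w\|^2$.
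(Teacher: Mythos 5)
Your argument mirrors the paper's derivation (Rayleigh quotient, the rewrite $\tfrac{v^T S v}{v^T S_{\tt pre} v} = \tfrac{1}{1+Z}$, and then an AM--GM bound on $Z$ after a vector substitution), but the substitution step as you have written it does not actually hold, and this is the same slip that appears in the paper's own chain of equalities. With $u = \sqrt{\eta}\,M^{-1/2}\bar{K}^{1/2}v$ and $w = M^{1/2}\bar{K}^{-1/2}v$ one gets
\begin{align}
  u^T w = \sqrt{\eta}\, v^T v, \qquad u^T u = \eta\, v^T \bar{K}^{1/2} M^{-1} \bar{K}^{1/2} v, \qquad w^T w = v^T \bar{K}^{-1/2} M \bar{K}^{-1/2} v,
\end{align}
none of which equals the numerator $2\sqrt{\eta}\, v^T M v$ or the denominator $v^T(\eta \bar{K} + M \bar{K}^{-1} M)v$ unless $\bar{K}$ and $M$ commute. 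The heuristic of ``inserting $\bar{K}^{1/2}\bar{K}^{-1/2}$'' cannot be applied termwise inside a quadratic form; $v^T A v$ and $v^T B^{1/2} A B^{-1/2} v$ are in general different numbers, so the intermediate identities are false pointwise, and the claim $Z = \tfrac{2u^Tw}{u^Tu+w^Tw}$ is not established.

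The repair is small and preserves the rest of your reasoning verbatim. Take instead
\begin{align}
  u = \sqrt{\eta}\,\bar{K}^{1/2} v, \qquad w = \bar{K}^{-1/2} M v.
\end{align}
Then $u^T w = \sqrt{\eta}\, v^T M v$, $u^T u = \eta\, v^T \bar{K} v$, and $w^T w = v^T M \bar{K}^{-1} M v$, so $u^T u + w^T w = v^T(\eta\bar{K} + M \bar{K}^{-1} M)v$ and $Z = \tfrac{2 u^T w}{u^T u + w^T w}$ holds as an identity for every $v$. From here your concluding steps are sound: $(u - w)^T(u-w) \geq 0$ gives $Z \leq 1$, positivity of $\eta$, $M$, and $\bar{K}$ (hence of $u^T u + w^T w$ for $v \neq 0$) gives $Z > 0$, and $\tfrac{1}{1+Z} \in [1/2,1)$ follows, with $1/2$ attained when $u = w$ and the bound $<1$ strict because $Z > 0$. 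The appeal to Fischer's lemma for the extremal Rayleigh quotients, and the non-degeneracy bookkeeping you emphasize, are both correct and match the paper.
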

\begin{corollary}
The condition number is bounded as follows
\begin{align}
 \kappa(S_{\tt pre}^{-1}S) < 2.
\end{align}
\end{corollary}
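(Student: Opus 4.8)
The plan is to read off the bound directly from Theorem \ref{eig_estimate_bosh}, using only the definition of the condition number for an SPD-preconditioned SPD matrix. First I would recall that $S$ is SPD (Theorem \ref{schur_spd}) and $S_{\tt pre}$ is SPD (Lemma \ref{lem:Stilde}), so the preconditioned operator $S_{\tt pre}^{-1}S$ is similar to the symmetric matrix $S_{\tt pre}^{-1/2} S \, S_{\tt pre}^{-1/2}$, which is itself SPD; hence all eigenvalues of $S_{\tt pre}^{-1}S$ are real and strictly positive, and $\kappa(S_{\tt pre}^{-1}S) = \lambda_{\max}(S_{\tt pre}^{-1}S)/\lambda_{\min}(S_{\tt pre}^{-1}S)$ is well defined in the usual sense (no need for the modulus-based definition used for the indefinite preconditioner).

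Next I would invoke Theorem \ref{eig_estimate_bosh}, which gives $\lambda(S_{\tt pre}^{-1}S) \subset [1/2,1)$, so that $\lambda_{\min}(S_{\tt pre}^{-1}S) \geq 1/2$ and $\lambda_{\max}(S_{\tt pre}^{-1}S) < 1$. Dividing the strict upper bound by the (positive) lower bound yields
\begin{align*}
  \kappa(S_{\tt pre}^{-1}S) = \frac{\lambda_{\max}(S_{\tt pre}^{-1}S)}{\lambda_{\min}(S_{\tt pre}^{-1}S)} < \frac{1}{1/2} = 2,
\end{align*}
which is exactly the claimed estimate.

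There is no real obstacle here: the corollary is an immediate arithmetic consequence of the eigenvalue localization already proved, and the only thing worth stating carefully is that positivity of both $S$ and $S_{\tt pre}$ legitimizes writing $\kappa$ as a simple ratio of extreme eigenvalues. If one wanted to be fully self-contained, the single extra line to include is the similarity transform $S_{\tt pre}^{-1}S \sim S_{\tt pre}^{-1/2} S \, S_{\tt pre}^{-1/2}$ to justify realness and positivity of the spectrum; everything else is quoting Theorem \ref{eig_estimate_bosh}.
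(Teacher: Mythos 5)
Your proof is correct and follows the same (essentially unique) route the paper intends: the corollary is an immediate consequence of Theorem \ref{eig_estimate_bosh}, and the paper simply omits the explicit argument. Your additional observation that SPD-ness of $S$ and $S_{\tt pre}$ (via the similarity $S_{\tt pre}^{-1}S \sim S_{\tt pre}^{-1/2}SS_{\tt pre}^{-1/2}$) makes the spectrum real and positive, so the modulus-free ratio definition of $\kappa$ applies, is a sensible point of care but does not change the substance.
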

\begin{remark}
  When using GMRES \cite{Saad2003}, right preconditioning is preferred. As in
  Theorem \ref{thm:eigleft}, similar estimate for the right preconditioned
  matrix $S S_{\tt pre}^{-1}$ holds, because both $S_{\tt pre}^{-1}S$ and
  $S^{-1}S_{\tt pre}$ are similar to a symmetric matrix $S_{\tt pre}^{-1/2} S
  S_{\tt pre}^{-1/2}.$
\end{remark}
Let $x=[x_1, x_2], b = [b_1, b_2].$ The preconditioned system
$\mathcal{B}_{\tt btdsc}^{-1}\mathcal{A}x = \mathcal{B}_{\tt
  btdsc}^{-1}b$ is given as follows
\begin{align}
\begin{pmatrix}
    I                            & \bar{K}^{-1}M^T \\
    0                            & S_{\tt pre}^{-1}S
  \end{pmatrix}
  \begin{pmatrix}
    x_1                                                  \\ x_2
  \end{pmatrix} =
    \begin{pmatrix}
    \bar{K}^{-1}                 & 0                     \\
    S_{\tt pre}^{-1}M^T \bar{K}^{-1} & -S_{\tt pre}^{-1}
  \end{pmatrix}
  \begin{pmatrix}
    b_1                                                  \\ b_2
  \end{pmatrix}
\end{align}
from which we obtain the following set of equations
\begin{align}
  x_1 + \bar{K}^{-1}M^T x_2 = \bar{K}^{-1}b_1, \quad S_{\tt pre}^{-1}Sx_2 =
  S_{\tt pre}^{-1}(M^T \bar{K}^{-1} b_1 - b_2).
\end{align}
\begin{alg}\label{alg:solve}
Objective: Solve $\mathcal{B}_{\tt btdsc}^{-1}\mathcal{A}x = \mathcal{B}^{-1}b$
  \begin{enumerate}
  \item Solve for $x_2:$ $S_{\tt pre}^{-1}Sx_2 =
    S_{\tt pre}^{-1}(M^T\bar{K}^{-1}b_1 - b_2)$
  \item Set $x_1 = \bar{K}^{-1}(b_1 - M^Tx_2)$
  \end{enumerate}
\end{alg}
Here if Krylov subspace method is used to solve for $x_2$, then matrix vector
product with $S$ and a solve with $S_{\tt pre}$ is needed. However, when the
problem size, i.e., $|\mathcal{N}_h|$ is large, it won't be feasible to do exact
solve with $\bar{K},$ and we need to solve it inexactly, for example, using
algebraic multigrid methods. In the later case, the decoupling of $x_1$ and
$x_2$ as in Algorithm \ref{alg:solve} is not possible; then we use GMRES
\cite[p. 269]{Saad2003} preconditioned by $\mathcal{B}_{\tt btdsc}.$

In view of Fact \ref{thm:eigleft} and Theorem \ref{eig_estimate_bosh}, we
already have eigenvalue estimates for $\mathcal{B}^{-1}_{\tt btdsc}\mathcal{A},$
however, as before, we can derive the eigenvalue bound and condition number
estimate for $\mathcal{B}^{-1}_{\tt btdsc}\mathcal{A}$ directly without
explicitely reducing it to Schur complement system. To this end, we consider
again the related generalized eigenvalue problem
\begin{align}
  \label{eq:gen_eig_bdsc}
  \begin{pmatrix}
    \bar{K} & M \\
    M & - \eta \bar{K}
  \end{pmatrix}\begin{pmatrix}
    u \\ v
  \end{pmatrix}
  &= \lambda
  \begin{pmatrix}
    \bar{K} & \\
     M  & -\eta (\bar{K} + \eta^{-1/2} M) \bar{K}^{-1} (\bar{K} +
      \eta^{-1/2} M) 
 \end{pmatrix}  \begin{pmatrix}
    u \\ v
  \end{pmatrix}.  
\end{align}
Note that we have rewritten $S_{\tt pre}$ in  \eqref{eq:gen_eig_bdsc} as follows
\begin{align}
S_{\tt pre} = -\eta (\bar{K} + \eta^{-1/2} M) \bar{K}^{-1} (\bar{K} +
      \eta^{-1/2} M).
\end{align}
From \eqref{eq:gen_eig_bdsc}, we have 
\begin{align}
  \label{eq:btdsc_eig_eqns}
  \bar{K}u + M v &= \lambda \bar{K}u \\
  Mu - \eta \bar{K}v &= \lambda (Mu - \eta (\bar{K} + \eta^{-1/2}M) \bar{K}^{-1}
  (\bar{K} + \eta^{-1/2}M)v ).
\end{align}
As before, we consider the eigenvalue problem \eqref{eqn:gen_eig} with the
eigenbasis $\{e_1, e_2, \dots, \}$ which are orthonormal
w.r.t. $\bar{K}+\eta^{-1/2}M$ inner product.
Expanding $u$ and $v$ in eigenbasis $\{ e_1, e_2, \dots, \}$ as in \eqref{eqn:uAndv},
and looking at the $i$th
rows of these two equations, we get 
\begin{align}
  \label{eq:expanded}
  \hat{u}_i \bar{K}e_i + \hat{v}_iMe_i &= \lambda \hat{u}_i \bar{K}e_i \\
  \hat{u}_iMe_i - \eta \hat{v}_i\bar{K}e_i &= \lambda (\hat{u}_i Me_i - \eta
  \hat{v}_i (\bar{K} + \eta^{-1/2}M) \bar{K}^{-1} (\bar{K} + \eta^{-1/2}M)e_i).
\end{align}
Again from \eqref{eqn:gen_eig}
\begin{align}
  \label{eq:ith_row_Mei_2}
  Me_i = \eta^{1/2}(1 - \mu_i) \mu_i^{-1} \bar{K}e_i.
\end{align}  
Substituting $Me_i$ from above in two equations of \eqref{eq:expanded}, we have
\begin{align}
  \label{eq:eig_eqns_2_1}
  \hat{u}_i \bar{K}e_i + \hat{v}_i \eta^{1/2} (1-\mu_i)\mu_i^{-1} \bar{K}e_i &=
  \lambda \hat{u}_i \bar{K}e_i \\
  \hat{u}_i \eta^{1/2} (1- \mu_i) \mu_i^{-1} \bar{K}e_i - \eta \hat{v}_i
  \bar{K}e_i &= \lambda \hat{u}_i \eta^{1/2} (1 - \mu_i) \mu_i^{-1} \bar{K}e_i -
  \lambda \eta {\mu_i}^{-2} \bar{K} e_i \hat{v}_i. \label{eq:eig_eqns_2_2}
\end{align}   
Multiplying by $e_i^T$ from the left and dividing by $e_i^T \bar{K} e_i \neq 0$
throughout, we have
\begin{align}
  \label{eq:ith_row3}
  \hat{u}_i + \eta^{1/2}(1-\mu_i)\mu_i^{-1} \hat{v}_i &= \lambda \hat{u}_i \\
  \eta^{1/2}(1 - \mu_i)\mu_i^{-1} \hat{u}_i - \eta \hat{v}_i &= 
  \lambda \eta^{1/2} (1 - \mu_i)\mu_i^{-1}\hat{u}_i - \lambda \eta \mu_i^{-2} \hat{v}_i. 
\end{align}
Rearranging above,
\begin{align}
  \label{eq:ith_eig}
  \begin{pmatrix}
    1 & \eta^{1/2} (1 - \mu_i)\mu_i^{-1} \\
    \eta^{1/2} (1 - \mu_i)\mu_i^{-1}(1 - \lambda) & -\eta 
  \end{pmatrix}
  \begin{pmatrix}
    \hat{u}_i \\ \hat{v}_i
  \end{pmatrix} =
  \lambda \begin{pmatrix}
    1 & \\
     &  -\eta \mu_i^{-2}
  \end{pmatrix}\begin{pmatrix}
    \hat{u}_i \\ \hat{v}_i
  \end{pmatrix}.
\end{align}
There exists at least one $i$ such that 
\begin{align}
  \label{eq:det}
  \text{det}\begin{pmatrix}
    1 - \lambda & \eta^{1/2} (1 - \mu_i)\mu_i^{-1} \\
    \eta^{1/2} (1 - \mu_i)\mu_i^{-1}(1 - \lambda) & -\eta (1 - \lambda \mu_i^{-2})
  \end{pmatrix}=0,
\end{align}
which implies
\begin{align}
  \label{eq:det2}
  (1 - \lambda \mu_i^{-2}) + (1 - \mu_i)^2 \mu_i^{-2} &= 0 \\
  \implies \lambda = \mu_i^2 + (1 - \mu_i)^2.
\end{align}
The function $f(\mu_i) = \mu_i^2 + (1 - \mu_i)^2$ has a critical point at $\mu_i
= 1/2,$ and $f(\cdot)$ monotonically decreases from 1 to 1/2 for $\mu_i \in
(0,1/2],$ and monotonically increases from 1/2 to 1 for $\mu_i \in [1/2, 1].$
All this leads to the following bound.
\begin{theorem}
  \label{eig_bound2} There holds
\begin{align}
  \lambda(\mathcal{B}_{\tt btdsc}^{-1} \mathcal{A}) \in [1/2, 1).
\end{align}
\end{theorem}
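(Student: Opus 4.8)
The plan is to complete the computation already set up through \eqref{eq:det2}. Expanding the determinant in \eqref{eq:det} one sees it factors as $-\eta(1-\lambda)\bigl[(1-\lambda\mu_i^{-2}) + (1-\mu_i)^2\mu_i^{-2}\bigr] = 0$, so every eigenvalue $\lambda$ of $\mathcal{B}_{\tt btdsc}^{-1}\mathcal{A}$ is either $\lambda = 1$ --- these are exactly the $|\mathcal{N}_h|$ trivial eigenvalues already accounted for in Fact \ref{thm:eigleft} (they arise from modes with $\hat v_i = 0$) --- or $\lambda = f(\mu_i) := \mu_i^2 + (1-\mu_i)^2$ for some index $i$. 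It therefore remains only to study the scalar function $f$ over the range of the $\mu_i$.

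First I would recall, exactly as in the proof of Theorem \ref{th:eig_bound_blk_diag}, that $\bar{K}$ and $\bar{K}+\eta^{-1/2}M$ are SPD (Lemma \ref{spd}), so the generalized eigenproblem \eqref{eqn:gen_eig} admits a complete $(\bar{K}+\eta^{-1/2}M)$-orthonormal eigenbasis $\{e_i\}$ whose eigenvalues satisfy $\mu_i \in (0,1)$ \emph{strictly}; strictness of both endpoints comes from the Rayleigh-quotient bound $0 < x^T\bar{K}x/(x^T\bar{K}x + \eta^{-1/2}x^TMx) < 1$. This is the only place the problem data ($\eta$ and, in the discrete setting, the mesh) enters, and it is what legitimizes the modal expansion \eqref{eqn:uAndv} together with the term-by-term matching that produced \eqref{eq:ith_eig}: after substituting $Me_i = \eta^{1/2}(1-\mu_i)\mu_i^{-1}\bar{K}e_i$ every term is a multiple of $\bar{K}e_i$, and the vectors $\bar{K}e_i$ are linearly independent since $\bar{K}$ is invertible.

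Then I would carry out the elementary one-variable analysis. Writing $f(\mu) = \mu^2 + (1-\mu)^2 = 2\mu^2 - 2\mu + 1$, this convex parabola has $f'(\mu) = 4\mu - 2$, hence attains its global minimum $f(1/2) = 1/2$ at $\mu = 1/2$, decreases on $(0,1/2]$, increases on $[1/2,1)$, and satisfies $f(\mu)\to 1$ as $\mu\to 0^+$ or $\mu\to 1^-$. Since $f$ is continuous and each $\mu_i \in (0,1)$, we obtain $f(\mu_i) \in [1/2,1)$, with $1/2$ attainable (whenever some $\mu_i = 1/2$) and the value $1$ not attained along this branch. Combining with the first paragraph gives the claimed inclusion, and as a consistency check it matches Fact \ref{thm:eigleft} together with Theorem \ref{eig_estimate_bosh}, since the spectrum of $\mathcal{B}_{\tt btdsc}^{-1}\mathcal{A}$ decomposes into the trivial value and the preconditioned Schur spectrum $\lambda(S_{\tt pre}^{-1}S)\subset[1/2,1)$.

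I do not anticipate a genuine obstacle. The only mildly unusual feature is that \eqref{eq:ith_eig} is nonlinear in $\lambda$ (a factor $1-\lambda$ sits inside the coefficient matrix), but once the determinant in \eqref{eq:det} is factored the remainder is routine. The one point worth a line of justification is the division by $e_i^T\bar{K}e_i$ in passing from \eqref{eq:eig_eqns_2_1}--\eqref{eq:eig_eqns_2_2} to \eqref{eq:ith_row3}, which is valid because $\bar{K}$ is SPD and $e_i\neq 0$; and the remark that an eigenvector must have $\hat u_i$ or $\hat v_i$ nonzero for at least one $i$, which is why the determinant is forced to vanish for some index $i$.
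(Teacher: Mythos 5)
Your proposal is correct and follows essentially the same modal expansion as the paper's own proof of Theorem~\ref{eig_bound2}: expand $u,v$ in the $(\bar{K}+\eta^{-1/2}M)$-orthonormal eigenbasis of \eqref{eqn:gen_eig}, substitute $Me_i = \eta^{1/2}(1-\mu_i)\mu_i^{-1}\bar{K}e_i$, reduce to a $2\times 2$ determinant condition per index $i$, and analyze the resulting scalar function $f(\mu)=\mu^2+(1-\mu)^2$ on $(0,1)$. You also supply a worthwhile clarification that the paper leaves implicit: the determinant in \eqref{eq:det} actually factors as $-\eta(1-\lambda)\bigl[(1-\lambda\mu_i^{-2})+(1-\mu_i)^2\mu_i^{-2}\bigr]$, and the paper silently divides by $(1-\lambda)$ in passing to \eqref{eq:det2}. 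Making that factor explicit is the right thing to do, and your identification of the $\lambda=1$ branch with the trivial eigenvalues of Fact~\ref{thm:eigleft} (modes with $\hat v_i = 0$) is correct.

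One thing to flag, though: your own factorization shows that $\lambda = 1$ \emph{is} in the spectrum of $\mathcal{B}_{\tt btdsc}^{-1}\mathcal{A}$, with multiplicity $|\mathcal{N}_h|$ --- exactly as Fact~\ref{thm:eigleft} asserts --- yet the theorem's stated interval $[1/2,1)$ excludes $1$. Having explicitly separated the two branches, you should not then say the analysis ``gives the claimed inclusion''; it gives $\lambda \in [1/2,1]$, with the right endpoint attained by the trivial block. This imprecision is inherited from the paper (the paper only tracks the nontrivial branch after \eqref{eq:det2}), but since you noticed the $(1-\lambda)$ factor you were in a position to correct it, and it is worth a sentence doing so. The condition-number consequence $\kappa < 2$ is unaffected either way.
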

\begin{corollary}
    The condition number satisfies the following bound 
  \begin{align}
  \kappa(\mathcal{B}_{\tt btdsc}^{-1} \mathcal{A}) &<2.
  \end{align}
\end{corollary}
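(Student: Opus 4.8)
The plan is to read the bound off directly from Theorem~\ref{eig_bound2}, which has already done all the real work. That theorem places the entire spectrum of $\mathcal{B}_{\tt btdsc}^{-1}\mathcal{A}$ inside the half-open interval $[1/2,1)$; in particular every eigenvalue is real and positive, so the eigenvalue-based condition number used earlier in the paper (the ratio of the largest to the smallest eigenvalue in modulus, as fixed just before Corollary~\ref{cond}) is well defined and coincides here with the usual spectral condition number. First I would recall, via Fact~\ref{thm:eigleft}, that the spectrum of $\mathcal{B}_{\tt btdsc}^{-1}\mathcal{A}$ consists of the value $1$ (with multiplicity $|\mathcal{N}_h|$) together with the spectrum of $S_{\tt pre}^{-1}S$, and that $S_{\tt pre}^{-1}S$ is similar to the symmetric matrix $S_{\tt pre}^{-1/2} S S_{\tt pre}^{-1/2}$, so all eigenvalues are genuinely real; this is the only point needing a word of care, since $\mathcal{B}_{\tt btdsc}$ itself is merely block lower triangular rather than symmetric.

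With this in hand the estimate is immediate. By Theorem~\ref{eig_bound2} we have $\lambda_{\min}(\mathcal{B}_{\tt btdsc}^{-1}\mathcal{A}) \ge 1/2$ and $\lambda_{\max}(\mathcal{B}_{\tt btdsc}^{-1}\mathcal{A}) < 1$, hence
\[
\kappa(\mathcal{B}_{\tt btdsc}^{-1}\mathcal{A})
 = \frac{\lambda_{\max}(\mathcal{B}_{\tt btdsc}^{-1}\mathcal{A})}{\lambda_{\min}(\mathcal{B}_{\tt btdsc}^{-1}\mathcal{A})}
 < \frac{1}{1/2} = 2 .
\]
The strictness comes entirely from the upper eigenvalue bound being $\lambda_{\max}<1$ rather than $\le 1$. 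Equivalently, one may bypass Theorem~\ref{eig_bound2} and argue $\kappa(\mathcal{B}_{\tt btdsc}^{-1}\mathcal{A}) = \kappa(S_{\tt pre}^{-1}S) < 2$ using Fact~\ref{thm:eigleft} together with Theorem~\ref{eig_estimate_bosh} and its corollary, since the eigenvalue $1$ lies inside $[1/2,1]$ and hence does not affect the extremes.

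There is essentially no obstacle: the statement is a one-line consequence of the preceding theorem. The only things worth spelling out for the reader are (i) that the spectrum here is real and positive, so the condition-number convention introduced before Corollary~\ref{cond} applies verbatim, and (ii) that the inequality is strict precisely because the spectral upper bound $1$ is not attained.
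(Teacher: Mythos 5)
Your primary argument is exactly the paper's (implicit) one: the corollary is a direct read-off from Theorem~\ref{eig_bound2}, namely $\lambda_{\min}\ge 1/2$ and $\lambda_{\max}<1$ give $\kappa<1/(1/2)=2$, and that is correct as far as it goes. However, the auxiliary reasoning you add is internally inconsistent with the facts you invoke. Fact~\ref{thm:eigleft} puts the value $1$ into the spectrum of $\mathcal{B}_{\tt btdsc}^{-1}\mathcal{A}$ with multiplicity $|\mathcal{N}_h|$, so one cannot simultaneously hold that $\lambda_{\max}(\mathcal{B}_{\tt btdsc}^{-1}\mathcal{A})<1$; if $1$ really is an eigenvalue then $\lambda_{\max}=1$, the strictness of $\kappa<2$ must come from $\lambda_{\min}>1/2$ rather than from the top, and the ``equivalent'' route $\kappa(\mathcal{B}_{\tt btdsc}^{-1}\mathcal{A})=\kappa(S_{\tt pre}^{-1}S)$ fails because adjoining the eigenvalue $1$ raises the maximum above $\lambda_{\max}(S_{\tt pre}^{-1}S)$. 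In fact the determinant condition \eqref{eq:det} factors as $(1-\lambda)\bigl[\lambda-\mu_i^2-(1-\mu_i)^2\bigr]=0$, so $\lambda=1$ is indeed a root that the passage to \eqref{eq:det2} silently discarded; that is a latent inconsistency in the source between Theorem~\ref{eig_bound2} and Fact~\ref{thm:eigleft}, and your write-up reproduces both halves of it without reconciling them. Stick with the first paragraph (read the bound off Theorem~\ref{eig_bound2} as stated) and drop, or correct, the ``equivalently'' sentence.
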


\begin{remark}[{\bf Relation between eigenvalues of truncated and untruncated
    system}]
\label{rem:relation_eig_vals}
  We have two cases
  \begin{enumerate}
  \item $(1,1)$ block is solved inexactly: as mentioned before, in this case, the preconditioner is
    block tridiagonal hence unsymmetric, consequently, Fischer theorem cannot be
    used to show relation between truncated and untruncated system
  \item $(1,1)$ block is solved exactly: in this case, the problem 
    reduces to Schur complement system, and due to Lemma \eqref{schur_spd}, the
    truncated Schur complement remains SPD. The preconditioner for truncated Schur complement $\hat{S}$ is defined below
    \begin{align}
      {\hat{S}}_{\tt pre} = (\hat{M} + \sqrt{\eta} \bar{K})
      \hat{K}^{-1}(\hat{M} + \sqrt{\eta} \hat{K}) &= \eta \bar{K} +
      \hat{M} \hat{K}^{-1} \hat{M}^T + \sqrt{\eta} \hat{M} +
      \sqrt{\eta}\bar{K} \hat{K}^{-1}\hat{M}^T \label{spre} \\ 
      &= \hat{S} + \sqrt{\eta} (\hat{M} + \bar{K} \hat{K}^{-1}\hat{M}^T).
    \end{align}
    First, it is not evident whether ${\hat{S}}_{\tt pre}$ is similar to a symmetric
    matrix. If it is, then we want to know whether the following holds    
    \begin{align}
      \lambda_{\max}({\hat{S}}_{\tt pre}^{-1}\hat{S}) \leq
      \lambda_{\max}({S}_{\tt pre}^{-1}S), \quad \lambda_{\min} ({\hat{S}}_{\tt pre}^{-1}\hat{S}) \geq
      \lambda_{\min}({S}_{\tt pre}^{-1}S). \label{eig_precon_sch}
    \end{align}
    We leave this as a subject of future work. Since $S_{\tt pre}$ may be unsymmetric, we shall use \eqref{spre} with GMRES that allows unsymmetric preconditioners.
  \end{enumerate}
\end{remark}
\subsection{\label{sec:precon2}Block Diagonal Schur Complement Preconditioner (BDSC)}
Substituting $\theta=0$ in \eqref{eqn:templ}, we obtain a block diagonal
preconditioner involving Schur complement as follows
\begin{align}
  \mathcal{B}_{\tt bdsc} =
  \begin{pmatrix}
    2\bar{K} & \\
    & S
  \end{pmatrix} \sim \begin{pmatrix}
    \bar{K} & \\
    & S
  \end{pmatrix},
\end{align}
where $S = \eta \bar{K} + M \bar{K}^{-1} M.$ Once again $S$ is approximated by
$S_{\tt pre}$ as before.
\begin{remark}
  As in Lemma \ref{lem:eigvals_truncated}, we have
  \begin{align}
    \lambda_{\min} (\widecheck{\mathcal{B}}_{\tt
      bdsc}^{-1}\widecheck{\mathcal{A}}) \geq \lambda_{\min}(\mathcal{B}_{\tt
      bdsc}^{-1}\mathcal{A}), \quad \lambda_{\max} (\widecheck{\mathcal{B}}_{\tt
      bdsc}^{-1}\widecheck{\mathcal{A}}) \geq \lambda_{\max}(\mathcal{B}_{\tt
      bdsc}^{-1}\mathcal{A}),
  \end{align}
  which suggests the following optimal preconditioner for
  $\hat{\mathcal{A}}$
  \begin{align}
    \hat{\mathcal{B}}_{\tt bdsc} =
    \begin{pmatrix}
      \hat{K} & \\
       & S 
    \end{pmatrix},
  \end{align}
  moreover, due the spectral equivalence of $S_{\tt pre}$ and $S$ established in
  \eqref{eqn:SandSpre}, we propose the following preconditioner using same notation
  \begin{align} 
    \hat{\mathcal{B}}_{\tt bdsc} =
    \begin{pmatrix}
      \hat{K} & \\
       & S_{\tt pre}
    \end{pmatrix}.
  \end{align}
  In practice, we shall replace
  $S_{\tt pre}$ by $\hat{S}_{\tt pre}$ defined in \eqref{spre}. 
\end{remark}
As before, we consider the eigenvalue problem \eqref{eqn:gen_eig} with the
eigenbasis $\{e_1, \dots, \}$ which are orthonormal
w.r.t. $\bar{K}+\eta^{-1/2}M$ inner product.  Consider the following generalized
eigenvalue problem
\begin{align}
  \label{eq:gen_eig_bdsc2}
  \begin{pmatrix}
    \bar{K} & M \\
    M & - \eta \bar{K}
  \end{pmatrix}\begin{pmatrix}
    u \\ v
  \end{pmatrix}
  &= \lambda
  \begin{pmatrix}
    \bar{K} & \\
      & \eta (\bar{K} + \eta^{-1/2} M) \bar{K}^{-1} (\bar{K} +
      \eta^{-1/2} M) 
 \end{pmatrix}  \begin{pmatrix}
    u \\ v
  \end{pmatrix}  
\end{align}
which leads to 
\begin{align}
  \label{eq:eig_eqns}
  \bar{K}u + Mv &= \lambda \bar{K} u \\
  Mu - \eta \bar{K} v &= \lambda \eta (\bar{K} + \eta^{-1/2} M) \bar{K}^{-1}
  (\bar{K} + \eta^{-1/2} M)v.
\end{align}
As before, expanding $u$ and $v$ in eigenbasis $\{ e_1, e_2, \dots, \}:$ 
\begin{align}
  \label{eq:eig_basis_u_v}
  u = \sum_i \hat{u}_i e_i, \quad v = \sum_i \hat{v}_i e_i,
\end{align}
and substituting $u$ and $v$ from above in \eqref{eq:eig_eqns}, and looking at
the $i$th rows of both equations, we have
\begin{align}
  \label{eq:ith_rows}
  \hat{u}_i \bar{K} e_i + \hat{v}_i M e_i &= \lambda \hat{u}_i \bar{K}e_i \\
  \hat{u}_i Me_i - \eta \hat{v}_i \bar{K}e_i &= \lambda \eta (\bar{K} +
  \eta^{-1/2} M) \bar{K}^{-1}\hat{v}_i (\bar{K} +
  \eta^{-1/2} M)e_i.
\end{align}
Again from \eqref{eqn:gen_eig}
\begin{align}
  \label{eq:ith_row_Mei}
  Me_i = \eta^{1/2}(1 - \mu_i) \mu_i^{-1} \bar{K}e_i.
\end{align}
Substituting $Me_i$ from above in \eqref{eq:ith_rows}, we have
\begin{align}
  \label{eq:eig_eqns_2_1}
  \hat{u}_i \bar{K}e_i + \hat{v}_i \eta^{1/2} (1-\mu_i)\mu_i^{-1} \bar{K}e_i &=
  \lambda \hat{u}_i \bar{K}e_i \\
  \hat{u}_i \eta^{1/2} (1- \mu_i) \mu_i^{-1} \bar{K}e_i - \eta \hat{v}_i
  \bar{K}e_i &= \lambda \eta (\bar{K} + \eta^{-1/2}M) \bar{K}^{-1} \hat{v}_i
  (\bar{K}e_i + \eta^{-1/2} \eta^{1/2} (1 - \mu_i) \mu_i^{-1} \bar{K} e_i) \\
  &= \lambda \eta (\bar{K} + \eta^{-1/2}M)  (e_i + (1 - \mu_i)
  \mu_i^{-1} e_i) \hat{v}_i\\
  &=  \lambda \eta (\bar{K} + \eta^{-1/2}M) e_i (1 + (1-\mu_i)
  \mu_i^{-1}) \hat{v}_i\\
  &= \lambda \eta  (1 + (1 - \mu_i) \mu_i^{-1}) \mu_i^{-1} \bar{K} e_i \hat{v}_i, \quad (\text{from}~ \eqref{eqn:gen_eig}).
  \\ 
  &= \lambda \eta \mu^{-2} \bar{K} e_i \hat{v}_i. \label{eq:eig_eqns_2_2}
\end{align}   
Multiplying \eqref{eq:eig_eqns_2_1} and \eqref{eq:eig_eqns_2_2} from left by $e_i^T$
and multiplying \eqref{eq:eig_eqns_2_2} by $\mu_i^2,$ and cancelling $e_i^T \bar{K} e_i$ from both equations, we obtain
\begin{align}
  \label{eq:ith_row_red}
  \hat{u}_i + \hat{v}_i \eta^{1/2} (1-\mu_i) \mu_i^{-1} &= \lambda \hat{u}_i \\
  \hat{u}_i \eta^{1/2} (1 - \mu_i) \mu_i - \eta \mu_i^2 \hat{v}_i &= \lambda
  \eta \hat{v}_i,
\end{align}
writing in matrix form, we obtain
\begin{align}
  \label{eq:ith_row_matrix}
  \begin{pmatrix}
    1 & \eta^{1/2} (1 - \mu_i) \mu_i^{-1} \\
    \eta^{1/2} (1 - \mu_i) \mu_i & -\eta \mu_i^2
  \end{pmatrix} = \lambda
  \begin{pmatrix}
    1 &  \\
     & \eta
  \end{pmatrix}
  \begin{pmatrix}
    \hat{u}_i \\ \hat{v}_i
  \end{pmatrix}.
\end{align}
Since $u$ and $v$ are eigenvectors, there exists at least one $i$ such that
following holds
\begin{align}
  \label{eq:det_i}
  \text{det} \left(
    \begin{pmatrix}
      1 & \eta^{1/2}(1 - \mu_i)\mu_i^{-1} \\
      \eta^{1/2} (1 - \mu_i)\mu_i & -\eta \mu_i^2
    \end{pmatrix} - \lambda
    \begin{pmatrix}
      1 & \\
        & \eta
    \end{pmatrix}
\right) &= 0 \\
\text{det}\left(
  \begin{pmatrix}
    1 - \lambda & \eta^{1/2} (1 - \mu_i) \mu_i^{-1} \\
    \eta^{1/2} (1 - \mu_i) \mu_i & -\eta (\mu_i^2 + \lambda)
  \end{pmatrix} 
\right) &= 0 \\
\implies -\eta (1 - \lambda) (\mu_i^2 + \lambda) - \eta (1 - \mu_i)^2 &= 0 \\
\implies (1 - \lambda) (\mu_i^2 + \lambda) + (1 - \mu_i)^2 &= 0 \\
\implies \mu_i^2 + \lambda - \lambda \mu_i^2 - \lambda^2 + 1 + \mu_i^2 - 2 \mu_i
&= 0 \\ \label{eqn:quadratic_eqn}
\implies - \lambda^2 + \lambda (1 - \mu_i^2) + 2 \mu_i^2 - 2 \mu_i + 1 &=
0. 
\end{align}  
The equation \eqref{eqn:quadratic_eqn} has two roots as follows
\begin{align}
  \label{eqn:roots} 
  \lambda_1(\mu_i) = \frac{1 - \mu_i^2}{2} + \frac{\mu_i^4 + 6\mu_i^2 - 8\mu_i + 5}{2},
  \quad \lambda_2(\mu_i) = \frac{1 - \mu_i^2}{2} - \frac{\mu_i^4 + 6\mu_i^2 - 8\mu_i + 5}{2},
\end{align} 
with the constraints that $\mu_i \in (0,1].$ The critical points of the
first equation in \eqref{eqn:roots} is given by the roots of the following
equation
\begin{align}
  \label{eq:critical_roots}
  \frac{d \lambda_1}{d\mu_i} = (4\mu_i^3 + 12\mu_i - 8)/4(\mu_i^4 + 6\mu_i^2 - 8\mu_i + 5)^{1/2} - \mu_i = 0.
\end{align}
The roots are $\mu_i= 1, -\sqrt{2}-1,$ where the last one is discarded since it
is outside the constraint interval $(0,1].$ Since only the boundary points are
critical points, $\lambda_1$ is either monotonically increasing or monotonically
decreasing, but by checking, we have $\lambda_1(0) = (\sqrt{5}+1)/2, \: \lambda_1(1) =
1,$ thus, $\lambda_1$ is monotonically decreasing for $\mu_i \in (0,1].$ Thus
$\lambda_1 \in [1, (\sqrt{5}+1)/2].$ Similarly, we now consider the second root
 $\lambda_2$ in \eqref{eqn:roots} whose critical points are given by the roots
of
\begin{align}
  \label{eq:critical_second}
 \frac{d \lambda_2}{d\mu_i}  = - \mu_i - (4\mu_i^3 + 12\mu_i - 8)/(4(\mu_i^4 + 6\mu_i^2 - 8\mu_i +
  5)^{1/2}) = 0,
\end{align}
and it has repeated roots $\mu_i = \sqrt{2}-1.$ To determine whether it is a
maxima or minima, we consider
\begin{align}
  \label{eq:double_derivative}
  \frac{d^2 \lambda_2}{d\mu_i^2} = (4{\mu_i}^3 + 12\mu_i - 8)^2/(8({\mu_i}^4 + 6{\mu_i}^2 - 8\mu_i + 5)^{3/2}) - (12{\mu_i}^2 +
  12)/(4({\mu_i}^4 + 6{\mu_i}^2 - 8\mu_i + 5)^{1/2}) - 1,
\end{align}
which is negative for $\mu_i = \sqrt{2}-1,$ thus, it is a maxima for which
$\lambda_2$ attains the value $1 - \sqrt{2}.$ Since there are no other critical
points, the minima must occur at one of the two boundaries of $(0, 1].$ For
$\mu_i=0, \lambda_2 = (1 - \sqrt{5})/2,$ and for $\mu_i=1, \lambda_2 = -1.$ Thus we
have the following bound for eigenvalues.
\begin{theorem}[{\bf Eigenvalue bounds of $\mathcal{B}^{-1}_{\tt bdsc}\mathcal{A}$}]
  \label{eig_thm3}
  There holds
  \begin{align}
    \lambda(\mathcal{B}_{\tt bdsc}^{-1}\mathcal{A}) \in [-1, 1-\sqrt{2}] \cup [1, (\sqrt{5}+1)/2].  
  \end{align}
\end{theorem}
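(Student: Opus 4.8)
\medskip
\noindent\textbf{Proof plan.}
The plan is to push the reduction of the pencil $(\mathcal{A},\mathcal{B}_{\tt bdsc})$ all the way down to a scalar quadratic in $\lambda$ parametrized by a single number $\mu_i$, and then finish with a one–variable calculus argument, exactly along the lines already prepared in the paragraphs preceding the theorem. First I would invoke simultaneous diagonalization: since $\bar K$ and $\bar K+\eta^{-1/2}M$ are both SPD (Lemma~\ref{spd}), the generalized eigenvalue problem \eqref{eqn:gen_eig} admits a basis $\{e_i\}$, orthonormal in the $(\bar K+\eta^{-1/2}M)$–inner product, with $\bar K e_i=\mu_i(\bar K+\eta^{-1/2}M)e_i$; the Rayleigh–quotient computation already carried out shows $\mu_i\in(0,1)$, and the key identity $Me_i=\eta^{1/2}(1-\mu_i)\mu_i^{-1}\bar K e_i$ lets every occurrence of $M$ in \eqref{eq:gen_eig_bdsc2} be traded for a scalar multiple of $\bar K$.

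Second, I would expand an eigenvector $(u,v)$ of \eqref{eq:gen_eig_bdsc2} in this basis as in \eqref{eq:eig_basis_u_v}, substitute the expression for $Me_i$, test the $i$th row equations against $e_i^T$, and divide through by $e_i^T\bar K e_i>0$; this collapses the problem into the family of $2\times2$ generalized eigenvalue problems \eqref{eq:ith_row_matrix}. Since $(u,v)\neq 0$, some index $i$ carries a nontrivial pair $(\hat u_i,\hat v_i)$, so $\lambda$ must annihilate the determinant \eqref{eq:det_i}, which reduces to the quadratic \eqref{eqn:quadratic_eqn}, i.e. $-\lambda^2+\lambda(1-\mu_i^2)+2\mu_i^2-2\mu_i+1=0$. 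Its discriminant is $\mu_i^4+6\mu_i^2-8\mu_i+5=(\mu_i^2-1)^2+8(\mu_i-\tfrac12)^2+2>0$ on all of $(0,1)$, so the two roots $\lambda_1(\mu_i),\lambda_2(\mu_i)$ in \eqref{eqn:roots} are real and distinct, and every eigenvalue of $\mathcal{B}_{\tt bdsc}^{-1}\mathcal{A}$ (which is real, since $\mathcal{B}_{\tt bdsc}=\mathrm{diag}(\bar K,S_{\tt pre})$ is SPD) equals $\lambda_1(\mu_i)$ or $\lambda_2(\mu_i)$ for some $i$.

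The last and genuinely technical step is to determine the exact range of each branch as $\mu_i$ ranges over $(0,1)$ (the closed interval $[0,1]$ being used only to record limiting bounds). For $\lambda_1$ I would compute $d\lambda_1/d\mu_i$, clear the square–root denominator in \eqref{eq:critical_roots}, solve the resulting polynomial, and discard the root $\mu_i=-\sqrt2-1$ outside the admissible interval; only $\mu_i=1$ survives as a critical point, so $\lambda_1$ is monotone, and comparing the limiting value $\lambda_1(0^+)=(\sqrt5+1)/2$ with $\lambda_1(1)=1$ gives $\lambda_1\in[1,(\sqrt5+1)/2]$. For $\lambda_2$ the analogous differentiation \eqref{eq:critical_second} yields an interior critical point $\mu_i=\sqrt2-1$, a second–derivative check \eqref{eq:double_derivative} identifies it as a maximum with value $1-\sqrt2$, and the boundary values $\lambda_2(0^+)=(1-\sqrt5)/2$, $\lambda_2(1)=-1$ give $\lambda_2\in[-1,1-\sqrt2]$. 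The union of these two intervals is the asserted spectrum. The main obstacle is purely the bookkeeping in the derivative calculations: since $d\lambda_j/d\mu_i=0$ becomes polynomial only after squaring, one must carefully discard spurious roots and verify that no critical point is lost in the limit $\mu_i\to0^+$; everything else is a direct application of the machinery already assembled above.
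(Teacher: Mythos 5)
Your plan reproduces the paper's own proof: reduce via the generalized eigenvalue problem \eqref{eqn:gen_eig} to a family of $2\times2$ pencils in $(\hat u_i,\hat v_i)$, solve the resulting quadratic in $\lambda$, and determine the ranges of the two root branches $\lambda_1(\mu_i),\lambda_2(\mu_i)$ on $\mu_i\in(0,1]$ by calculus. Your small addition of writing the discriminant as $(\mu_i^2-1)^2+8(\mu_i-\tfrac12)^2+2>0$ is a harmless but useful sanity check (and incidentally flags that \eqref{eqn:roots} in the paper is missing a square root over the discriminant, though the subsequent computations clearly use the correct $(\cdot)^{1/2}$); otherwise the approach is identical.
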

The condition number estimate then follows.
\begin{corollary}[{\bf Condition number of $\mathcal{B}^{-1}_{\tt bdsc}\mathcal{A}$}]
  There holds
\begin{align}
  \label{eq:cond3}
  \kappa(\mathcal{B}_{\tt bdsc}^{-1}\mathcal{A}) <
  \frac{\sqrt{5}+1}{2(\sqrt{2}-1)} \approx 3.90.
\end{align}
\end{corollary}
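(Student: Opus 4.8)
The plan is to read the condition number directly off the spectral localization in Theorem~\ref{eig_thm3}, exactly as Corollary~\ref{cond} was obtained from Theorem~\ref{th:eig_bound_blk_diag}. Since $\mathcal{B}_{\tt bdsc}^{-1}\mathcal{A}$ is indefinite, the relevant quantity is the condition number in the indefinite sense --- the ratio of the largest to the smallest eigenvalue in modulus --- with the same convention already used for $\mathcal{B}_{\tt bd}^{-1}\mathcal{A}$:
\begin{align*}
  \kappa(\mathcal{B}_{\tt bdsc}^{-1}\mathcal{A}) = \frac{\max_i |\lambda_i(\mathcal{B}_{\tt bdsc}^{-1}\mathcal{A})|}{\min_i |\lambda_i(\mathcal{B}_{\tt bdsc}^{-1}\mathcal{A})|}.
\end{align*}

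First I would apply $|\cdot|$ to the inclusion $\lambda(\mathcal{B}_{\tt bdsc}^{-1}\mathcal{A}) \in [-1,\,1-\sqrt 2] \cup [1,\,(\sqrt 5+1)/2]$ from Theorem~\ref{eig_thm3}. The negative branch $[-1,\,1-\sqrt 2]$ maps under modulus onto $[\sqrt 2-1,\,1]$, whose point closest to the origin is $\sqrt 2-1\approx 0.414$; the positive branch $[1,\,(\sqrt 5+1)/2]$ is its own image under modulus and stays at distance at least $1$ from the origin. Hence every eigenvalue satisfies $|\lambda| \in [\sqrt 2-1,\,(\sqrt 5+1)/2]$, so $\max_i|\lambda_i| \le (\sqrt 5+1)/2$ and $\min_i|\lambda_i| \ge \sqrt 2-1 > 0$. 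Dividing yields
\begin{align*}
  \kappa(\mathcal{B}_{\tt bdsc}^{-1}\mathcal{A}) \le \frac{(\sqrt 5+1)/2}{\sqrt 2-1} = \frac{\sqrt 5+1}{2(\sqrt 2-1)} \approx 3.90.
\end{align*}

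To upgrade this to the strict inequality claimed, I would note that the value $(\sqrt 5+1)/2$ is attained by the root $\lambda_1$ only in the limit $\mu_i \to 0$, and $\mu_i = 0$ never occurs: the $\mu_i$ are the eigenvalues of the pencil \eqref{eqn:gen_eig} with $\bar K$ SPD (Lemma~\ref{spd}) and $M$ SPD, so each $\mu_i = e_i^T\bar K e_i / e_i^T(\bar K + \eta^{-1/2}M)e_i$ is strictly positive. Thus $\max_i|\lambda_i| < (\sqrt 5+1)/2$, whereas $\min_i|\lambda_i| = \sqrt 2-1$ is actually attained (at $\mu_i = \sqrt 2-1$, the maximiser of $\lambda_2$), and the strict bound follows. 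There is no genuine obstacle beyond this endpoint bookkeeping --- in particular checking that neither branch of the spectrum approaches the origin more closely than $1-\sqrt 2$ --- which is immediate from Theorem~\ref{eig_thm3}.
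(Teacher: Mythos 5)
Your argument is correct and is exactly the reasoning the paper leaves implicit: apply the modulus to the spectral inclusion of Theorem~\ref{eig_thm3}, observe $|\lambda|\in[\sqrt 2-1,(\sqrt 5+1)/2]$, and obtain strictness from the fact that $\mu_i>0$ strictly (so $\lambda_1(\mu_i)<(\sqrt5+1)/2$ for every $i$). One small overstatement: you claim $\min_i|\lambda_i|=\sqrt 2-1$ \emph{is attained} at $\mu_i=\sqrt2-1$, but the $\mu_i$ are the finitely many eigenvalues of the pencil \eqref{eqn:gen_eig} and there is no reason $\sqrt2-1$ should be among them; what you actually have, and what suffices, is the lower bound $\min_i|\lambda_i|\ge\sqrt2-1$, together with the strict upper bound on $\max_i|\lambda_i|$, which already yields the strict inequality in \eqref{eq:cond3}.
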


\section{\label{sec:numexp} Numerical Experiments}
All the experiments were performed in double precision arithmetic in MATLAB.  A
fixed number of 12 Uzawa iterations per time step is executed. The obstacle
problem is solved using monotone multigrid.  For the linear subproblem, the
Krylov solver used was restarted GMRES with inner subspace dimension of 60, and
maximum number of iterations allowed was 300. The iteration was stopped as soon
as the relative residual was below the tolerance of $10^{-7}.$ The local
sub-blocks of the preconditioner was solved using aggregation based AMG; the
stopping criteria for AMG was decrease of relative residual below $10^{-7}.$
Three test cases are considered
\begin{itemize}
  \item Evolution of square
  \item Evolution of randomly mixed phases
  \item Randomly truncated systems
\end{itemize}
We describe the numerical experiments with each of these test cases below.  
\subsection{Experiments with Various Evolutions}
In both the evolution problems, we chose $\epsilon=2 \times 10^{-2}$ and
$\tau=10^{-5}.$ We consider the mesh sizes $h=1/256, 1/400$ with 66049 and
160801 nodes respectively. In the Tables \ref{table:evol_rand}, \ref{tab:evol_square}, and \ref{tab:artificial}, we show the number of truncations denoted by {\tt \#trunc}, and
percentage of truncations denoted by {\tt \%trunc} during evolutions.  We recall from \eqref{eqn:trun_simplified}, that we need to solve twice, since, we use Sherman-Woodbury inversion \cite{Golub2013}[(2.1.5), p. 65]: in the tables, the iteration counts for the first solve is denoted by {\tt it1}, and those for the second solve is denoted by {\tt it2}. The {\tt time} in the table denotes the total time in seconds for both these solves. We
compare three preconditioners: {\tt bd}, {\tt bdsc}, and {\tt btdsc}.
\subsubsection{Evolution of Randomly Mixed Phases}
In this test case, we take initial solution $u$ to have random values between
-0.3 and 0.5 except for two pure phases of $u(1)=1$ and $u(\text{end})=-1.$ In Figure
\ref{fig:rand_1}, we show the initial active set configuration. The evolution
for various time steps are shown in Figures \ref{fig:rand_1} to
\ref{fig:rand_200}. For this test case, already at time step $\tau=80,$
about half of the nodes are truncated; suggesting fast separation initially.  The iteration counts for {\tt btdsc} is
the least. Except for {\#trunc=2}, {\tt btdsc} has the least CPU time of all
three preconditioners. Although, {\tt bdsc} has slightly less iterations than
{\tt bd}, the CPU times are large compared to that for {\tt bd}, especially,
initially when the number of truncations are less. The larger CPU times are
attributed to the fact that {\tt bdsc} requires three elliptic solves and one
matrix vector product, whereas, {\tt bd} requires only two elliptic solves.
Being a block tridiagonal preconditioner, {\tt btdsc} has more costs compared to
{\tt bd} and {\tt bdsc}, but since the iteration counts for {\tt btdsc} is
almost half of those for {\tt bd} and {\tt bdsc}, it converges significantly
faster. For this evolution, although truncations increase, the iteration counts
remain steady during various time steps for all three preconditioners. We
observe that initial fast dynamics of phase separation later slows down after
about $\tau=120,$ when we do not see any significant increase in
truncations. This suggests that the system remains structurally and spectrally
similar, this is suggested by the iteration count that remains almost constant
after $\tau=120$ for all three methods.

\begin{figure}[tbp]
\centering  
\subfigure[\label{fig:rand_1}$\tau=1$]{\includegraphics[width=3.4cm, height=3.1cm]{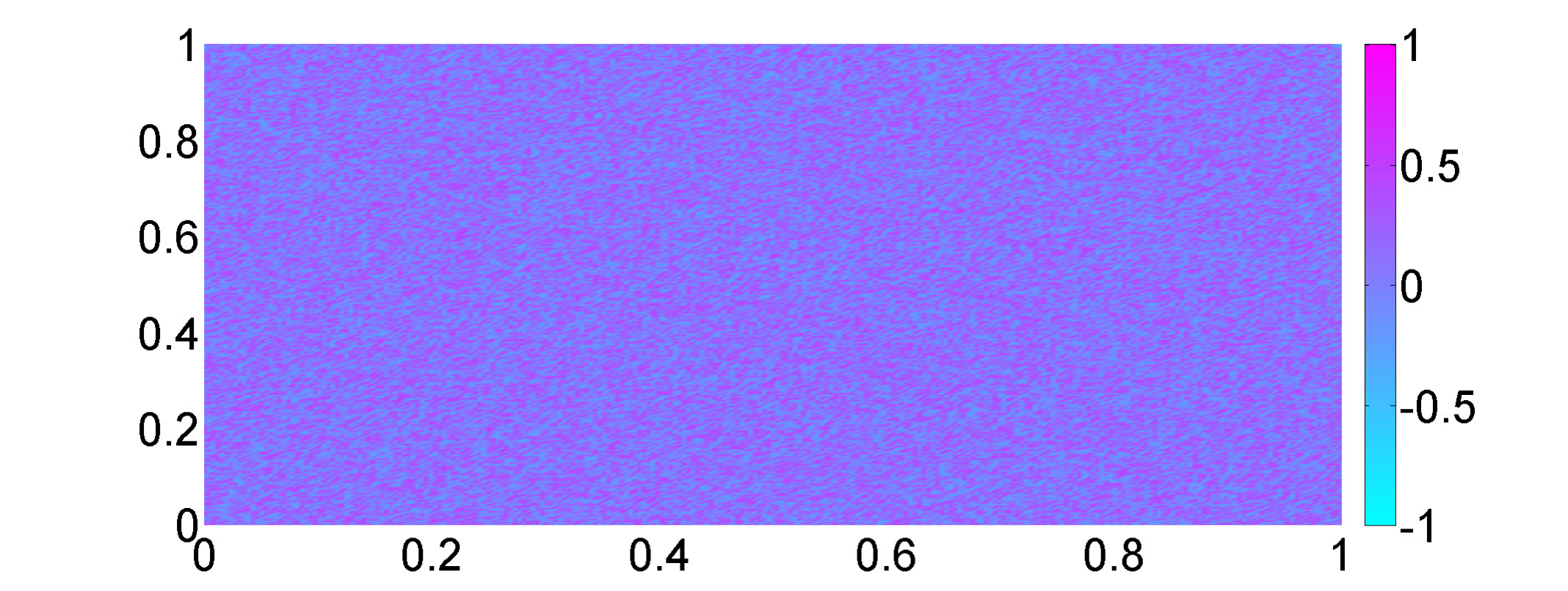}}
\hfill
\centering  
\subfigure[\label{fig:rand_20}$\tau=20$]{\includegraphics[width=3.4cm, height=3.1cm]{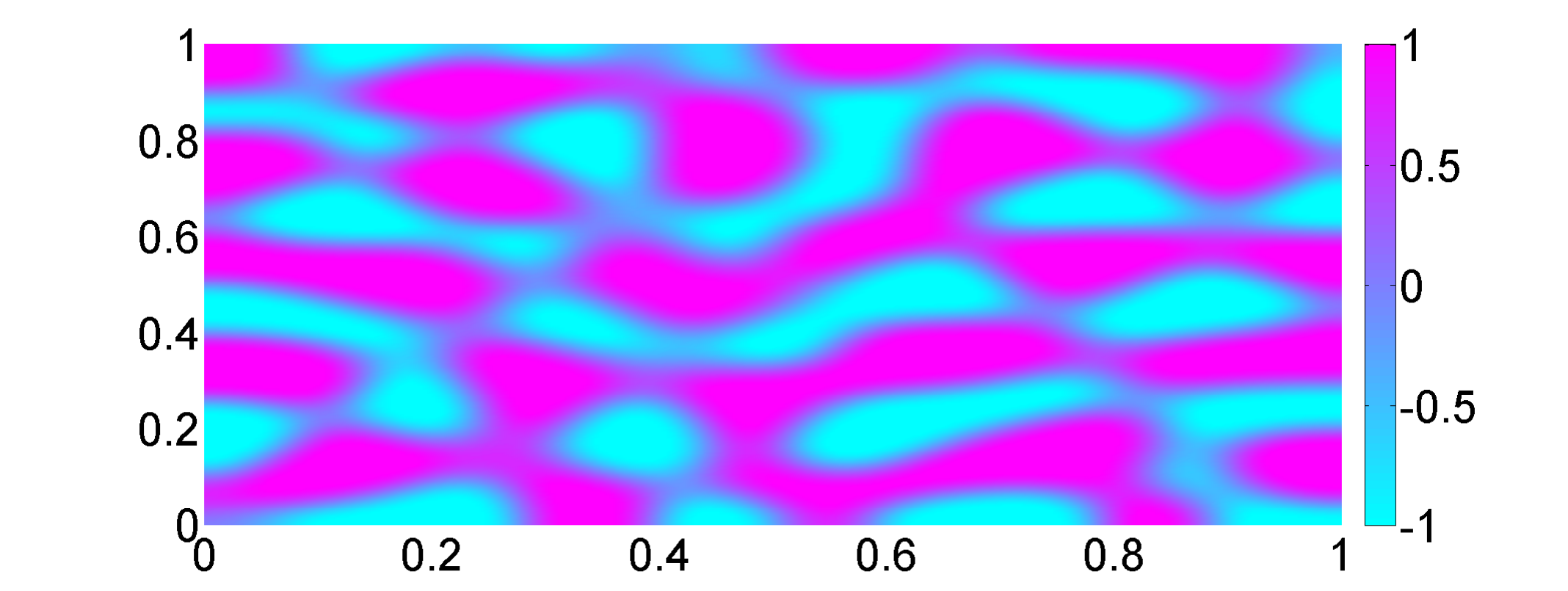}}
\hfill 
\centering  
\subfigure[\label{fig:rand_40}$\tau=40$]{\includegraphics[width=3.4cm, height=3.1cm]{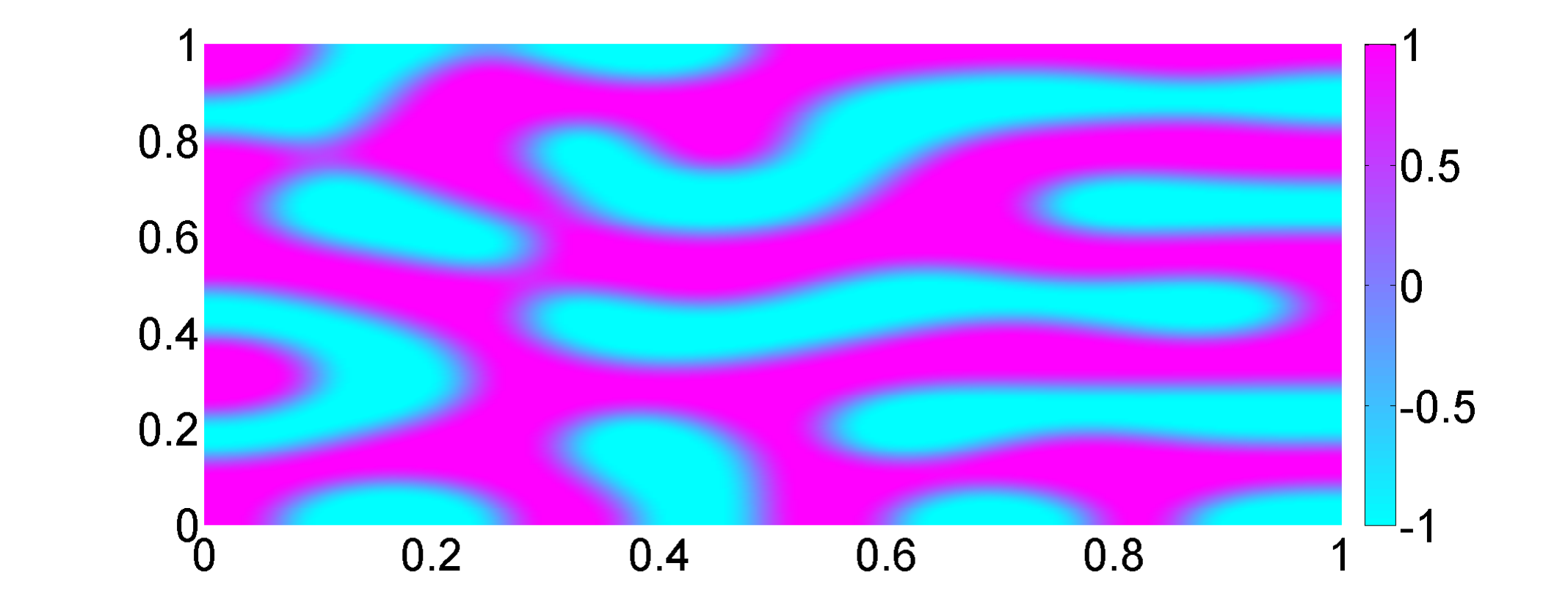}}
\hfill 
\centering  
\subfigure[\label{fig:rand_60}$\tau=60$]{\includegraphics[width=3.4cm, height=3.1cm]{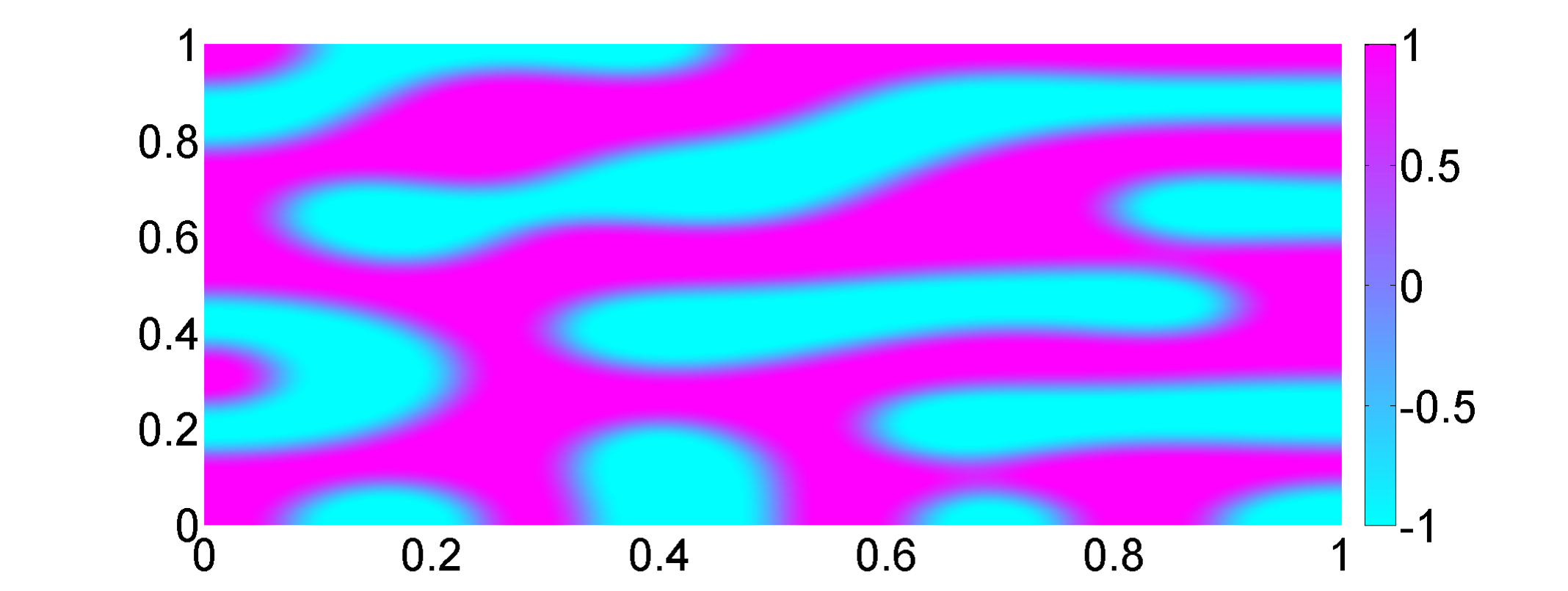}}
\hfill 
\centering  
\subfigure[\label{fig:rand_80}$\tau=80$]{\includegraphics[width=3.4cm, height=3.1cm]{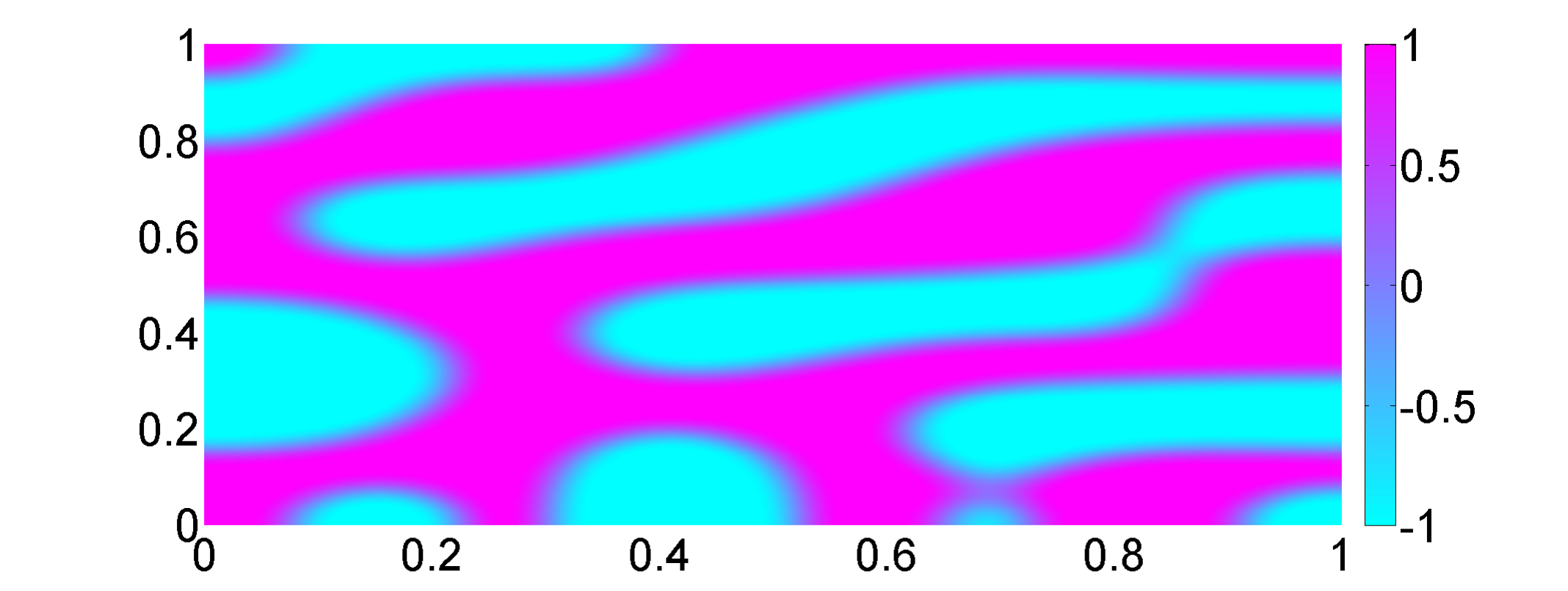}}
\hfill 
\centering  
\subfigure[\label{fig:rand_100}$\tau=100$]{\includegraphics[width=3.4cm, height=3.1cm]{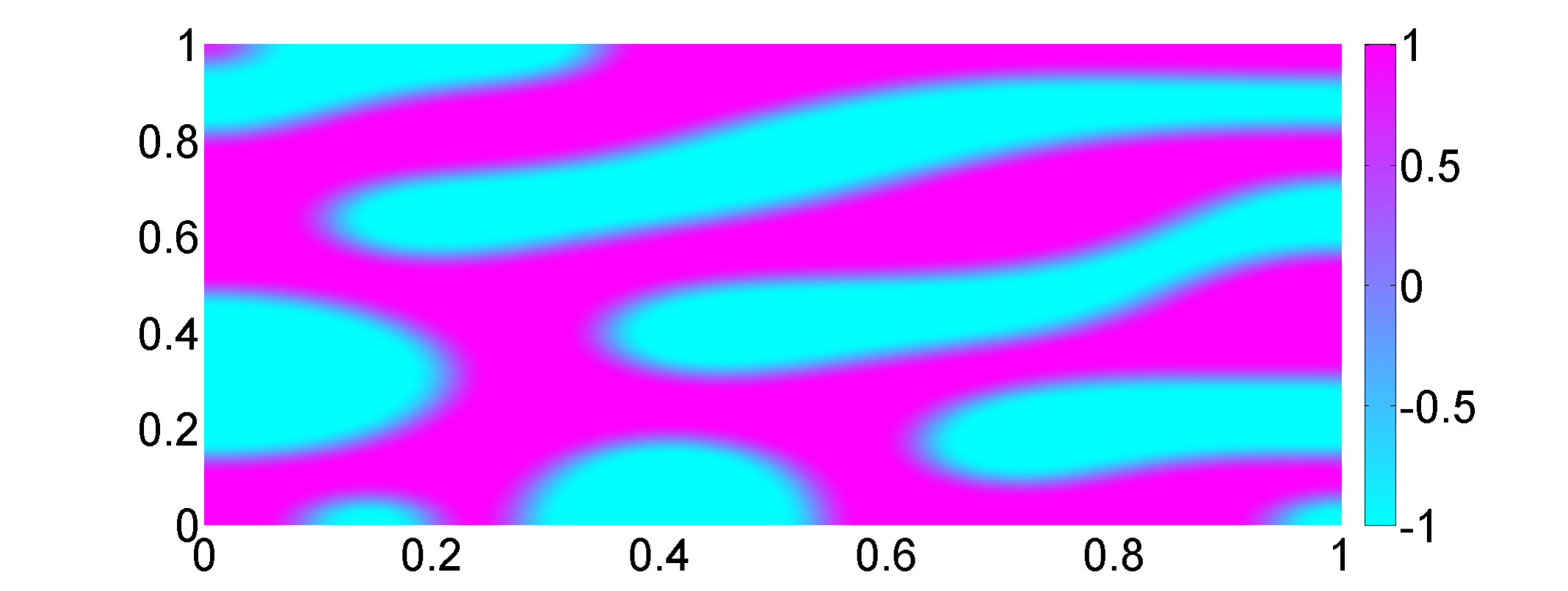}}
\hfill 
\centering  
\subfigure[\label{fig:rand_120}$\tau=120$]{\includegraphics[width=3.4cm, height=3.1cm]{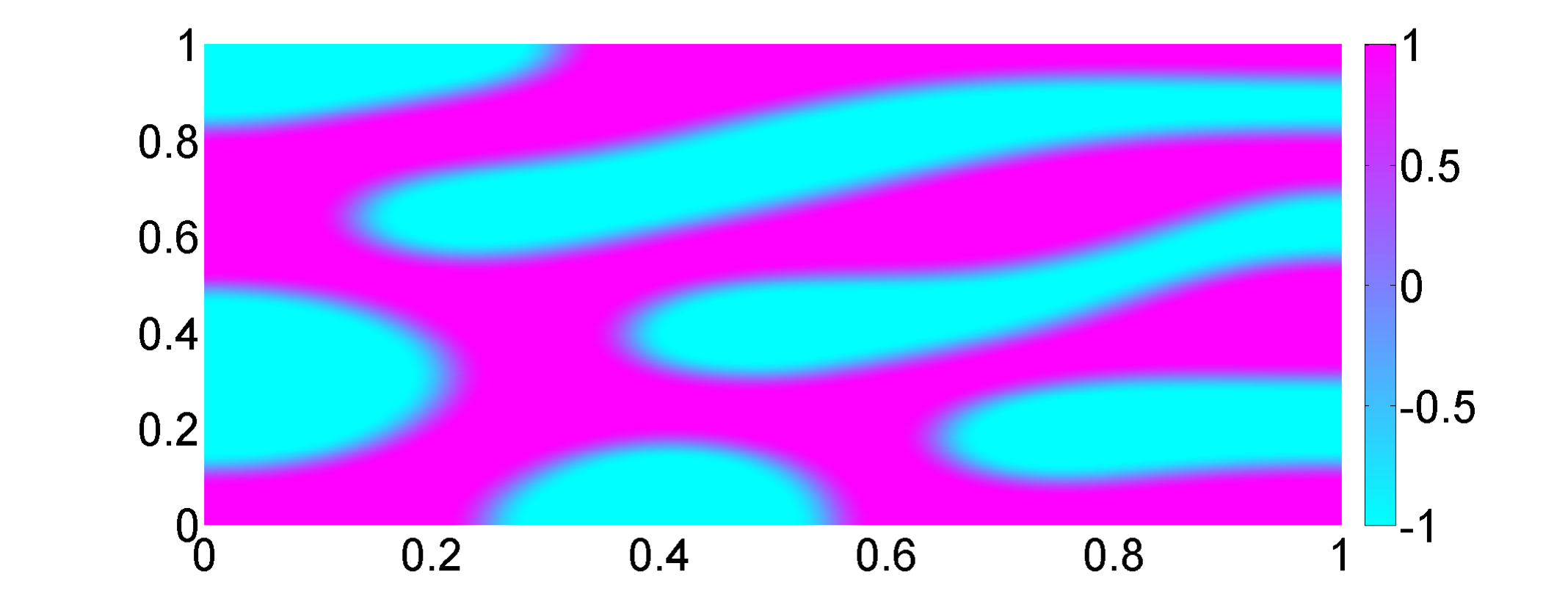}}
\hfill 
\centering  
\subfigure[\label{fig:rand_160}$\tau=160$]{\includegraphics[width=3.4cm, height=3.1cm]{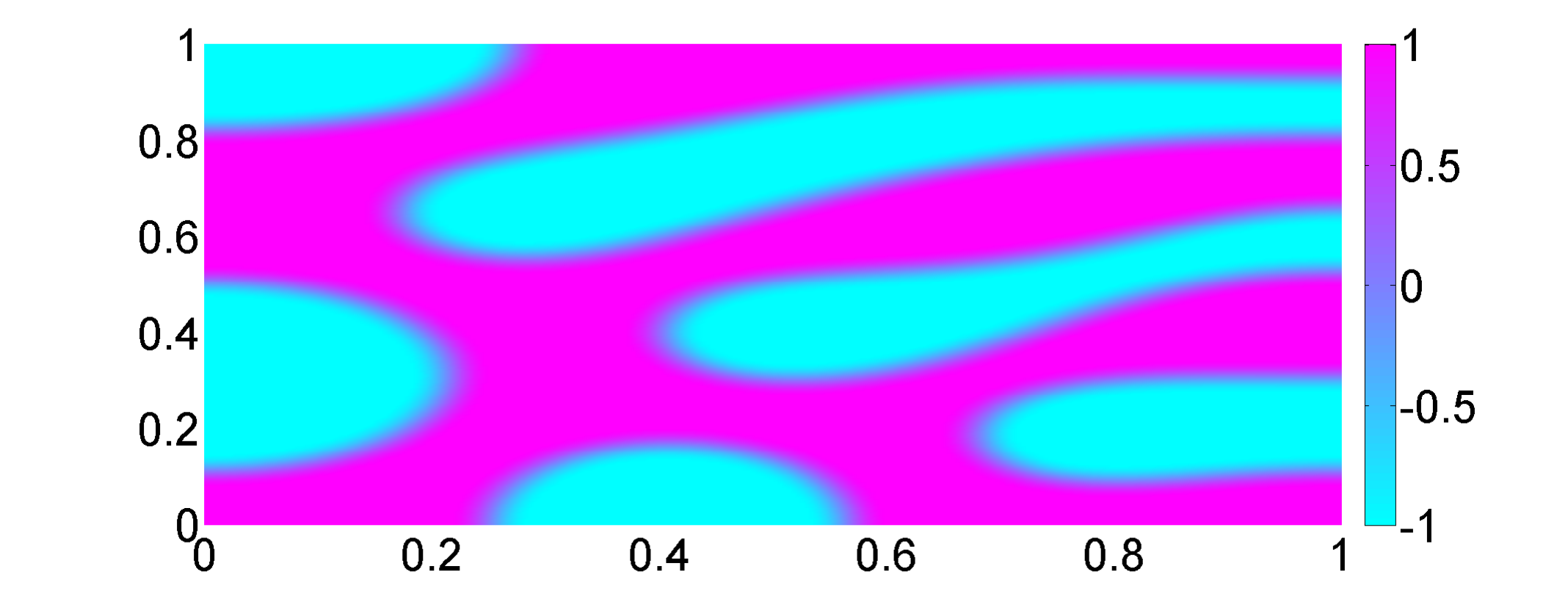}}
\hfill 
\centering 
\subfigure[\label{fig:rand_160}$\tau=180$]{\includegraphics[width=3.4cm, height=3.1cm]{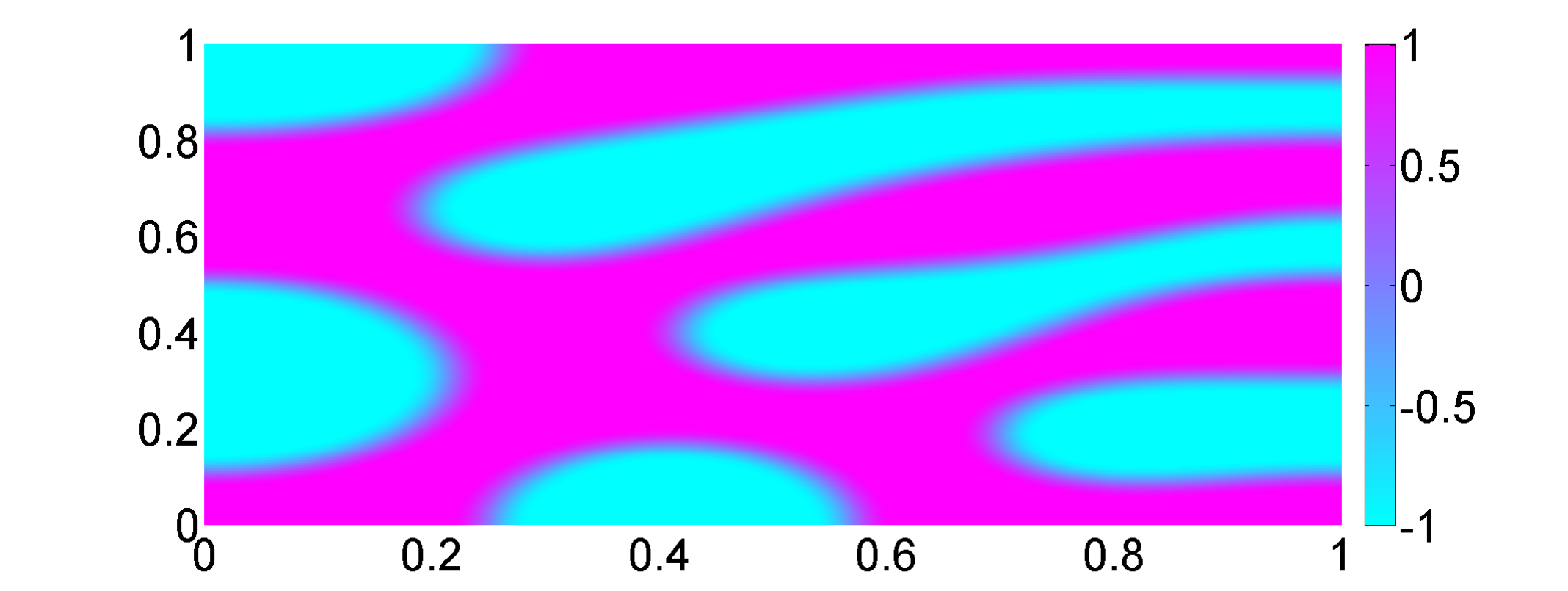}}
\hfill 
\centering 
\subfigure[\label{fig:rand_200}$\tau=200$]{\includegraphics[width=3.4cm, height=3.1cm]{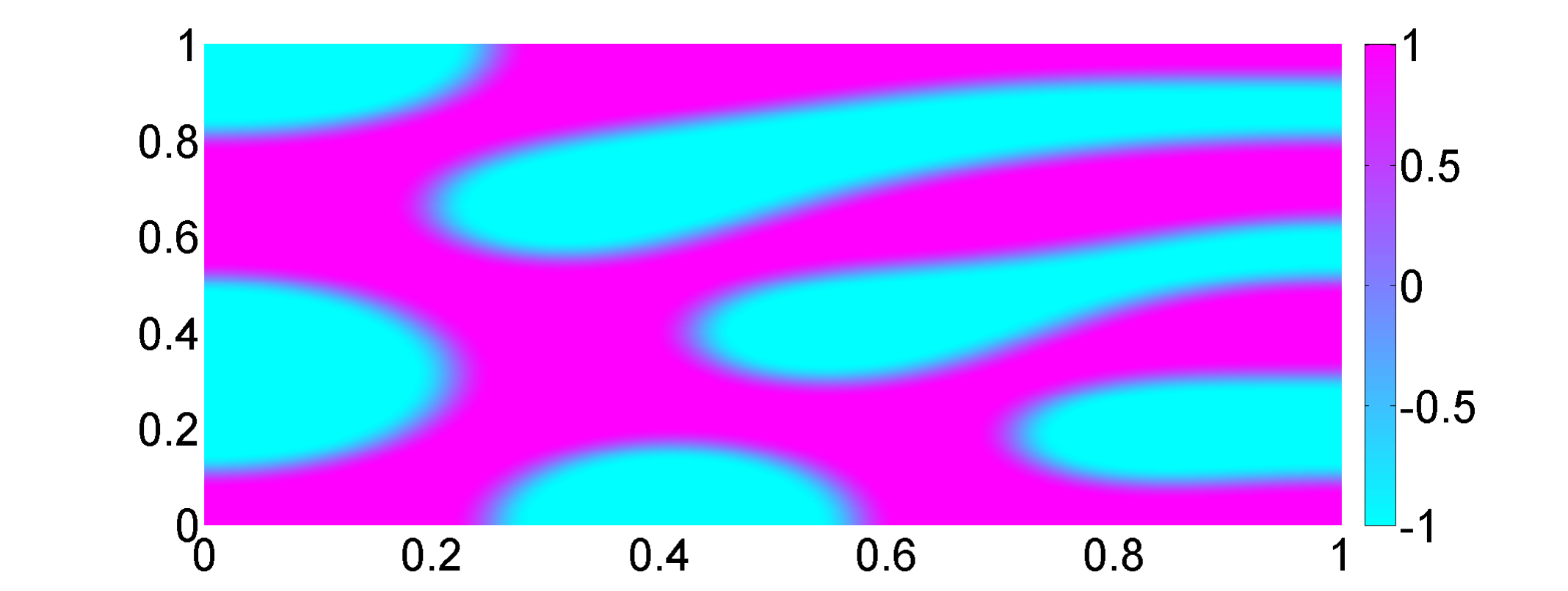}}
\caption{\label{fig:rand} Evolution of Random Initial Active Set configuration}
\end{figure}

\begin{table}
\caption{\label{table:evol_rand}Initial Random Active Set Configuration}
\begin{center}
\begin{tabular}{rrrr|rrr|rrr|rrr}
  &        &         &        & \multicolumn{3}{c|}{\tt bd} &
  \multicolumn{3}{|c}{\tt bdsc} & 
  \multicolumn{3}{|c}{\tt btdsc}                                                                                             \\
  \cline{5-13} 
  $1/h$ & \#tstp & \#trunc & \%trun & it1                     & it2                       & time & it1 & it2 & 
  time  & it1    & it2     & time                                                                                       \\
  \hline
  256    & 1      & 2       & 0.00   &  17                       & 15
  & 18.3     &  19   &  25   & 54.2 & 8 & 10 & 29.8 \\
  & 20     & 13865   & 20.99  &  23                       & 22
  & 29.0     &  24   &  25   & 51.7 & 11 & 11 & 24.2 \\
  & 40     & 25696   & 38.90  &  23                       & 21
  & 25.8     &  20   &  20   & 37.5 & 10 & 10 & 18.7 \\
  & 60     & 31109   & 47.09  &  23                       & 21
  & 25.0     &  19   &  19   & 31.8 & 10 & 10 & 17.6 \\
  & 80     & 34907   & 52.85  &  23                       & 21
  & 24.8     &  19   &  19   & 30.1 & 11 & 10 & 17.9 \\
  & 100    & 37336   & 56.52  &  22                       & 21
  & 24.9     &  19   &  19   & 30.6 & 10 & 9 & 15.7 \\
       & 120    & 39922   & 60.44  &  22                       & 19
       & 21.5     &  17   &  16   & 24.2 & 10 & 9 & 14.5 \\
       & 140    & 40357   & 61.10  &  21                       & 19
       & 21.1     &  17   &  16   & 23.9 & 10 & 9 & 14.8 \\
       & 160    & 40861   & 61.86  &  21                       & 19
       & 20.2     &  17   &  16   & 23.6 & 10 & 9 & 14.6 \\
       & 180    & 41215   & 62.40  &  21                       & 19
       & 20.7     &  17   &  16   & 23.6 & 10 & 9 & 14.6 \\
       & 200    & 41490   & 62.81  &  21                       & 19
       & 20.6     &  17   &  16   & 24.2 & 9 &  9 & 13.9 \\
\hline
       & 1      & 2       & 0.00   &  17                       & 15
       & 46.1     &  19   &  25   & 131.3 & 8 & 10 & 72.3 \\
 400   & 20     & 16136   & 10.03  &  22                       & 22
 & 71.3     &  23   &  26   & 128.4 & 17 & 16 & 56.2 \\
       & 40     & 55886   & 34.75  &  23                       & 23
       & 69.6     &  21   &  21   & 95.7 & 10 & 10 & 47.9 \\
       & 60     & 72514   & 45.09  &  23                       & 21
       & 62.9     &  20   &  20   & 82.2 & 10 & 10 & 42.5 \\
       & 80     & 85496   & 53.16  &  23                       & 21
       & 57.4     &  18   &  18   & 67.7 & 10 & 10 & 42.4 \\
       & 100    & 92787   & 57.70  &  21                       & 21
       & 53.3     &  17   &  16   & 60.0 & 10 & 9 & 35.7 \\
       & 120    & 95995   & 59.69  &  21                       & 19
       & 49.0     &  17   &  16   & 56.7 & 10 & 9 & 35.5 \\
       & 140    & 98593   & 61.31  &  21                       & 19
       & 50.4     &  16   &  16   & 57.0 & 9 & 9 & 33.8 \\
       & 160    & 100733  & 62.64  &  21                       & 19
       & 50.4     &  17   &  16   & 58.4 & 9 & 9 & 33.3 \\
       & 180    & 102625  & 63.82  &  21                       & 19
       & 49.6     &  17   &  20   & 70.9 & 10 & 9 & 34.1 \\
       & 200    & 104522  & 65.00  &  21                       & 19
       & 48.3     &  17   &  16   & 56.2 & 9 & 12 & 47.4 \\
\hline
\end{tabular}
\end{center}
\end{table}

\subsubsection{Evolution of Square}
In this test case, we consider evolution of a square with a diffuse
interface. The initial active set configuration in Figure \ref{fig:square_1}, is
obtained by two squares; the innermost square is prescribed by the lower left
and upper right diagonal ends with coordinates $(0.25, 0.25)$ and $(0.75,
0.75),$ and the outermost square is defined by the coordinates of the diagonal
ends joining $(0.25-10h^2, 0.25-10h^2)$ and $(0.75+10h^2, 0.75+10h^2).$ Thus the
diffuse interface has a thickness of roughly $10h^2.$ In the diffuse interface
region, we consider mixed phases with random values in $[-0.3, 0.5]$ and outside
the diffuse region we presrcibe pure phases of +1 (pink region) and -1 (light
blue region). In Table \ref{tab:evol_square}, we show active set configurations for time steps
$\tau=1,20,40,60,80,100,120,140,160,180,200.$ We observe that for this test
case, the number of truncations remain very high at above 85\%. As for previous
test case, we see significant changes until about $\tau=120,$ after which it
evolves very slowely. In Table \ref{tab:evol_square}, we compare three
preconditioners for various time steps. Here again {\tt btdsc} is the best: it
has least iteration count and small CPU times compared to {\tt bd} and {\tt
  bdsc}. Comparing {\tt bd} and {\tt bdsc}, we find that although {\tt bdsc} has
less iteration count compared to {\tt bd}, {\tt bd} has smaller CPU time. The
reason for this has been explained above. As before, for all three methods, the
number of iterations remain almost constant for various time steps with time
step $\tau \geq 20$. For $h=1/256,$ {\tt bd} is slightly faster compared to {\tt
  bdsc}, and for $h=1/400,$ {\tt bd} is significantly faster compared to {\tt
  bdsc}.
  
\begin{figure}[tbp]
\centering  
\subfigure[\label{fig:square_1}$\tau=1$]{\includegraphics[width=3.4cm, height=3.1cm]{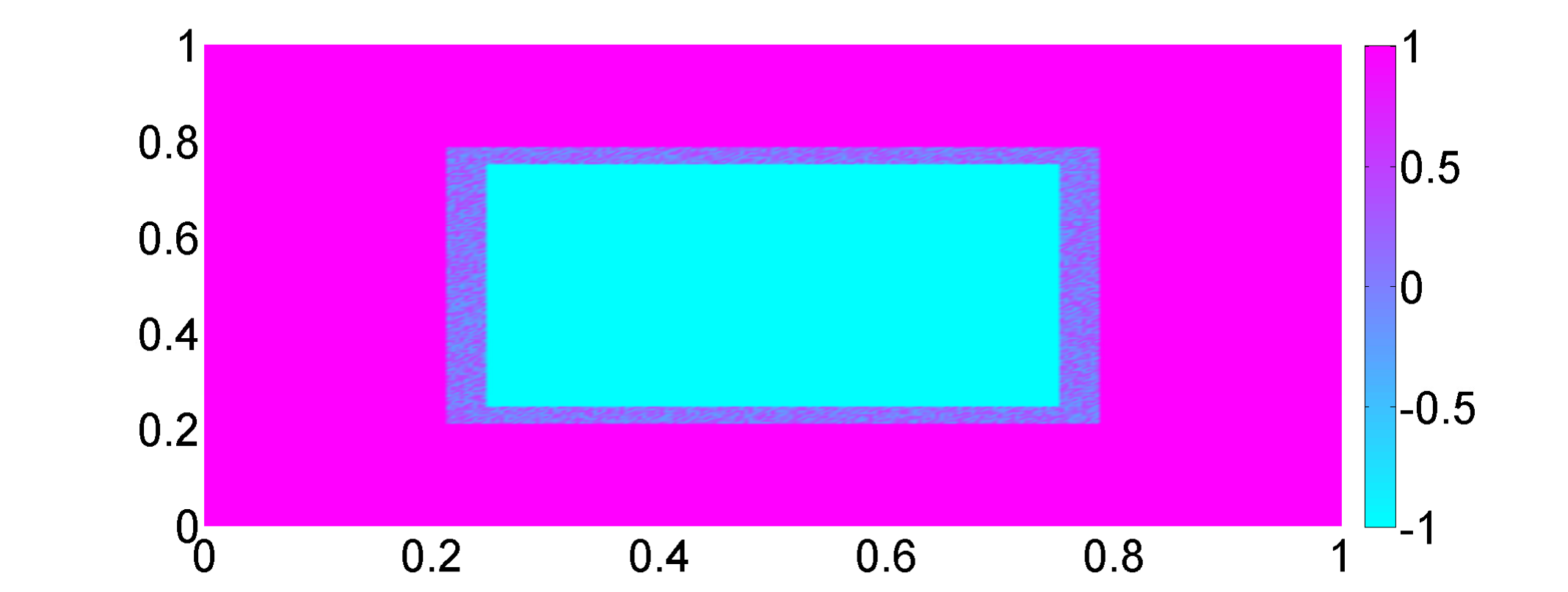}}
\hfill   
\centering  
\subfigure[\label{fig:square_20}$\tau=20$]{\includegraphics[width=3.4cm, height=3.1cm]{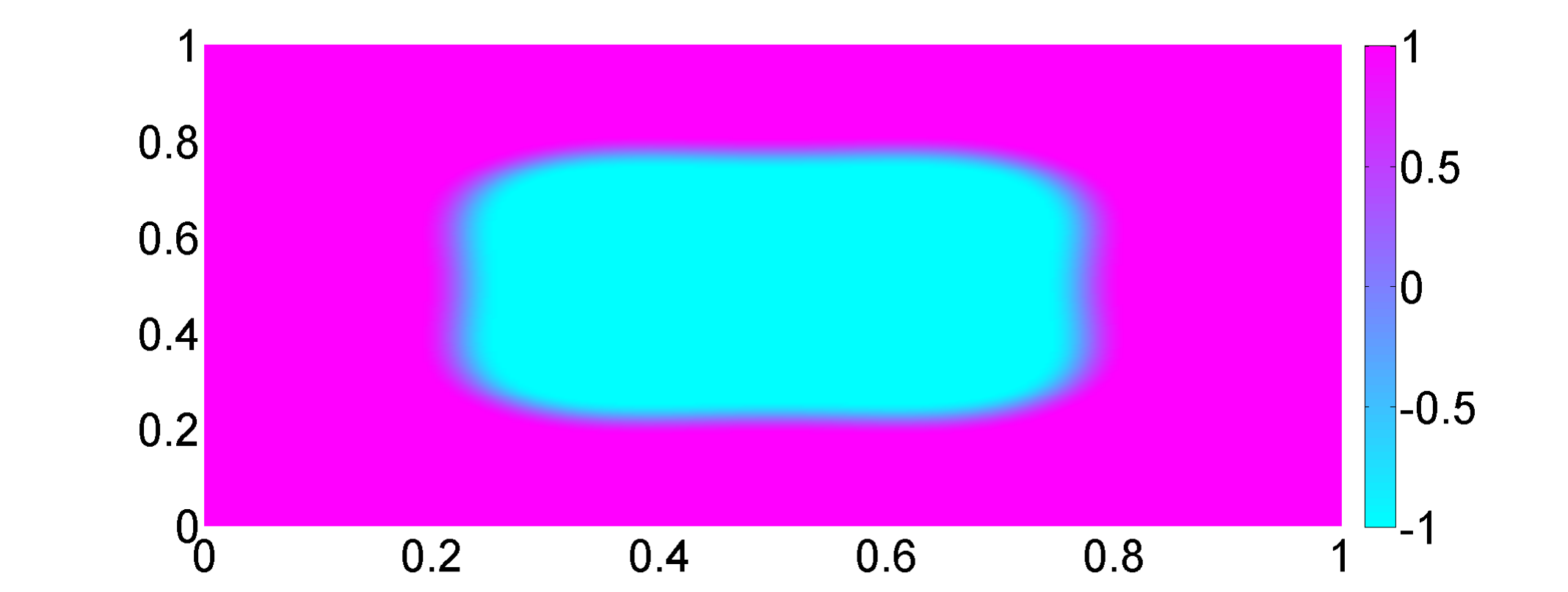}}
\hfill 
\centering  
\subfigure[\label{fig:square_40}$\tau=40$]{\includegraphics[width=3.4cm, height=3.1cm]{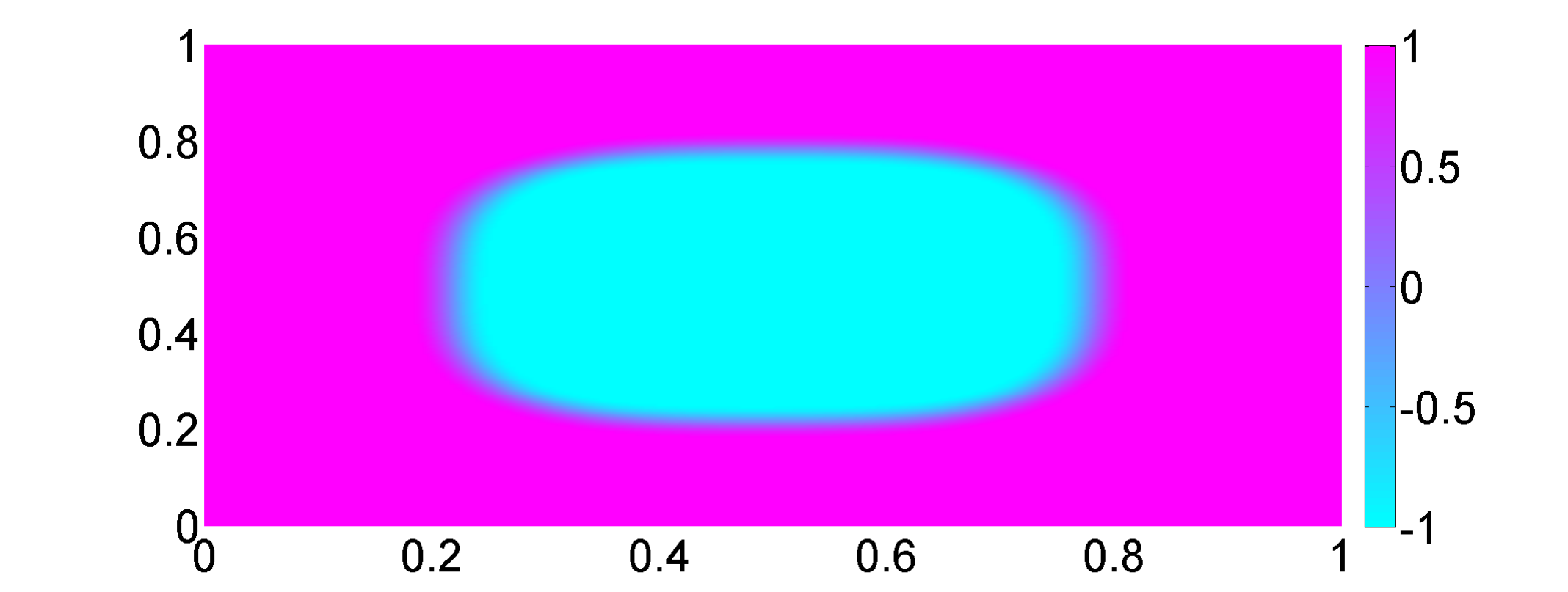}}
\hfill 
\centering  
\subfigure[\label{fig:square_60}$\tau=60$]{\includegraphics[width=3.4cm, height=3.1cm]{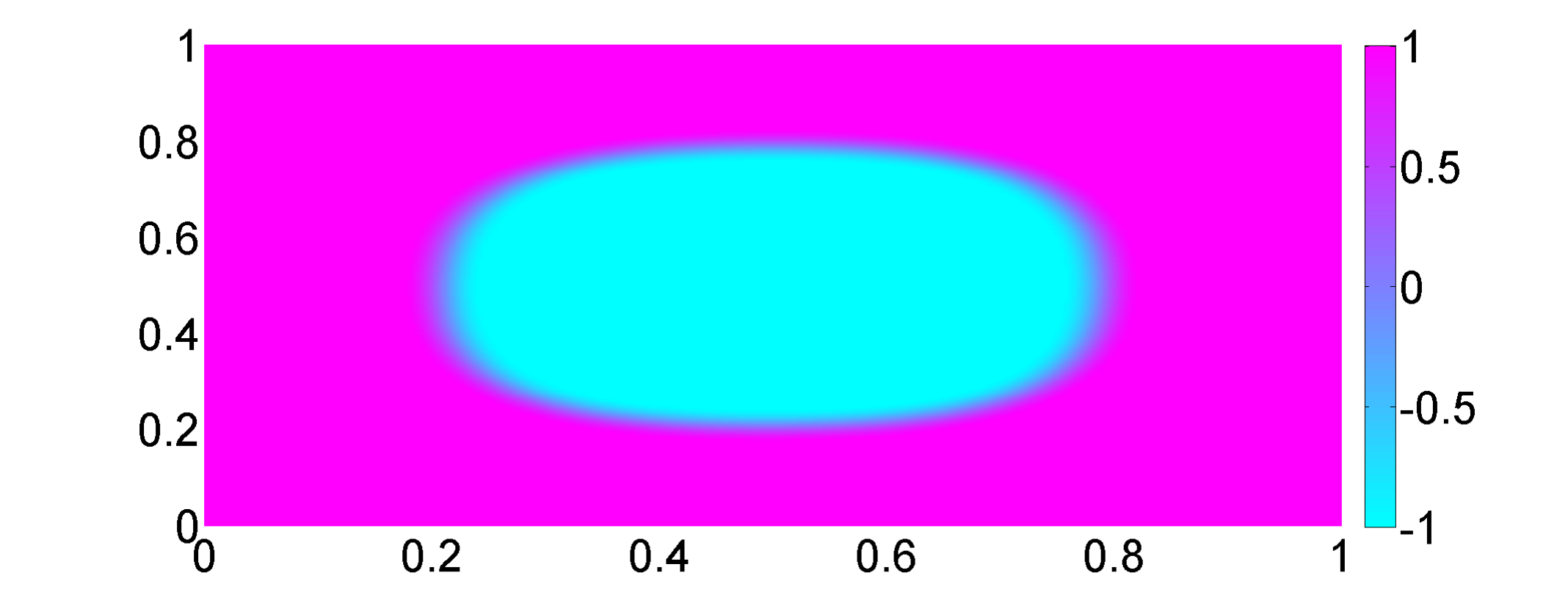}}
\hfill 
\centering   
\subfigure[\label{fig:square_80}$\tau=80$]{\includegraphics[width=3.4cm, height=3.1cm]{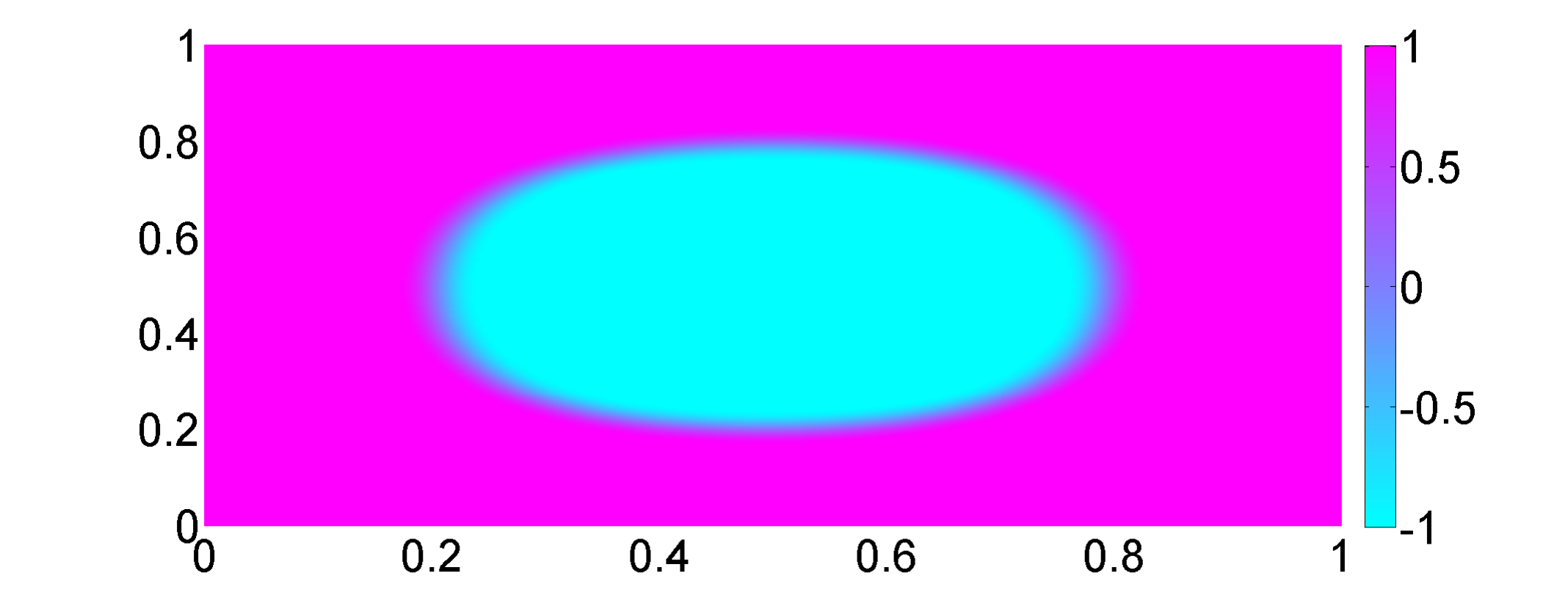}}
\hfill 
\centering   
\subfigure[\label{fig:square_100}$\tau=100$]{\includegraphics[width=3.4cm, height=3.1cm]{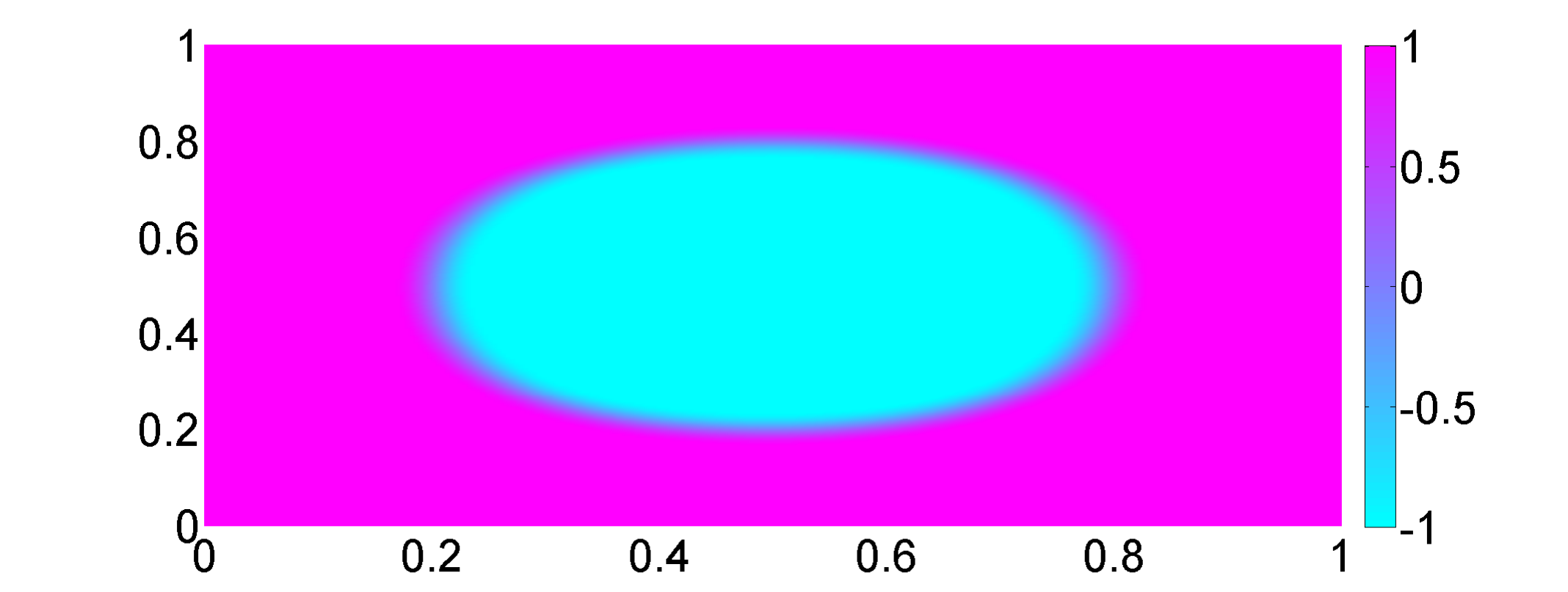}}
\hfill 
\centering   
\subfigure[\label{fig:square_120}$\tau=120$]{\includegraphics[width=3.4cm, height=3.1cm]{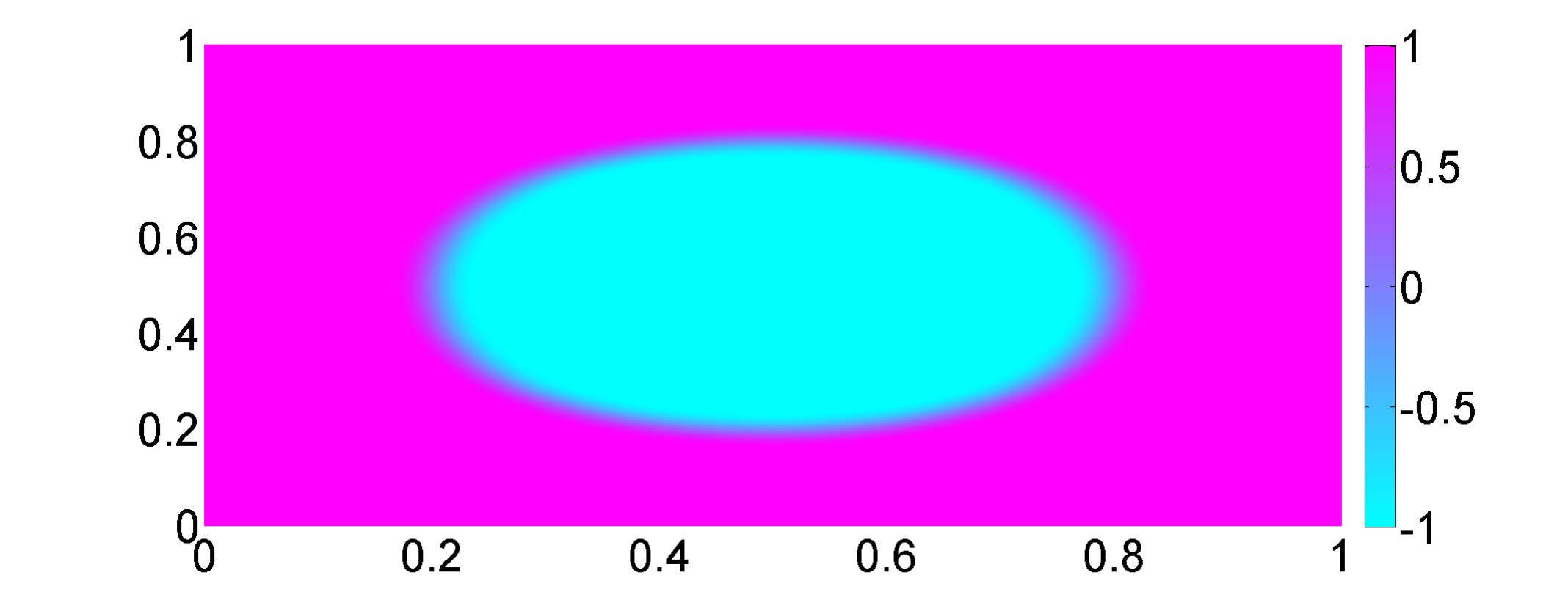}}
\hfill 
\centering   
\subfigure[\label{fig:square_160}$\tau=160$]{\includegraphics[width=3.4cm, height=3.1cm]{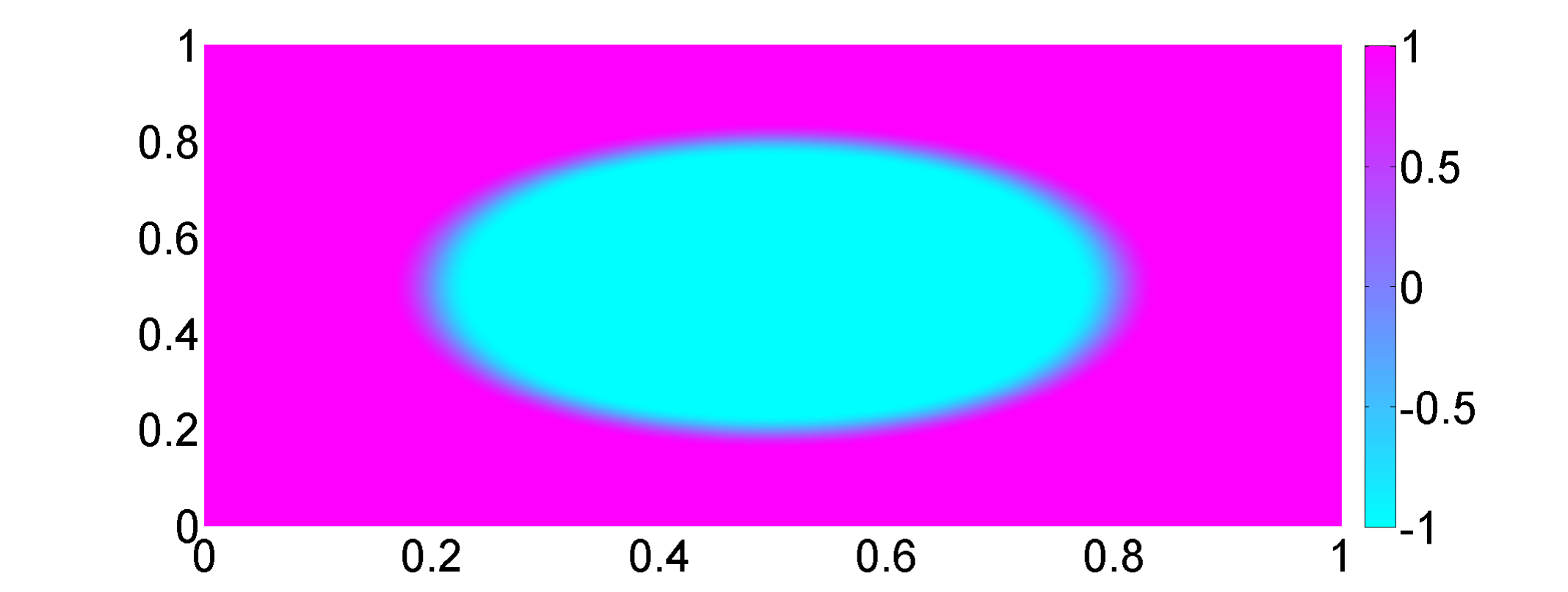}}
\hfill 
\centering     
\subfigure[\label{fig:square_180}$\tau=180$]{\includegraphics[width=3.4cm, height=3.1cm]{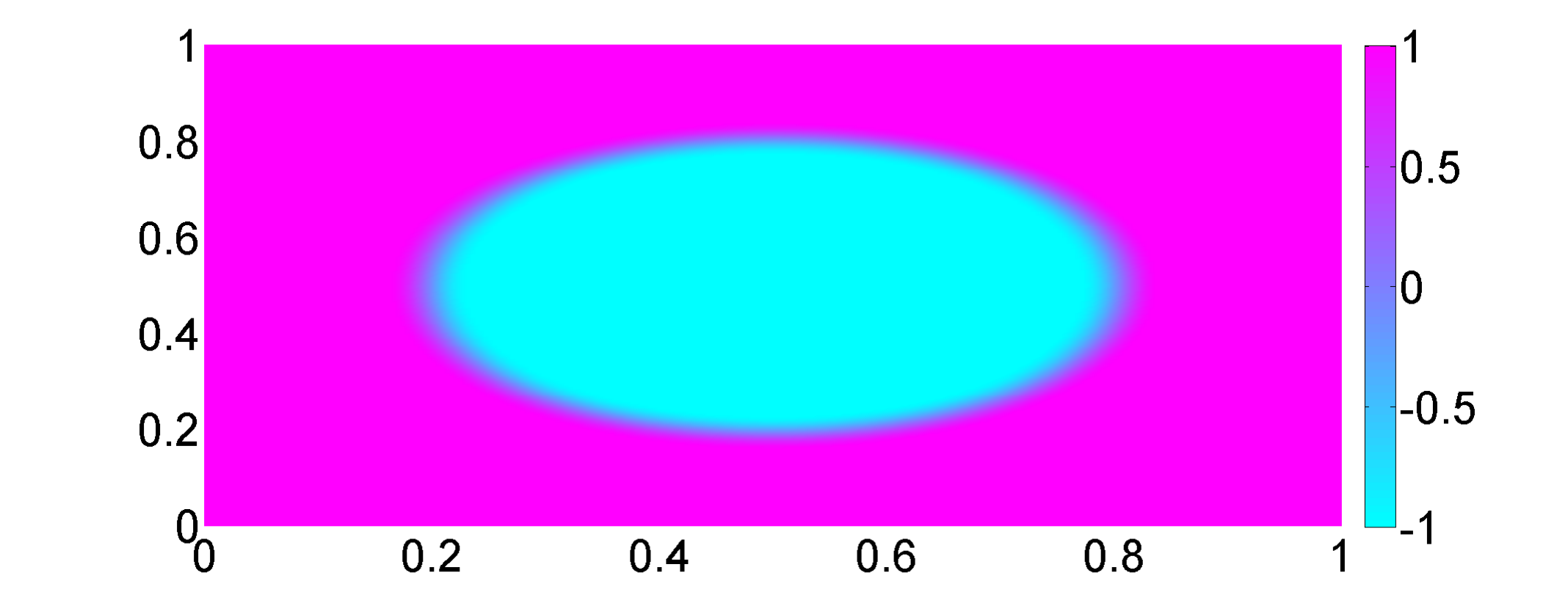}}
\hfill 
\centering     
\subfigure[\label{fig:square_200}$\tau=200$]{\includegraphics[width=3.4cm, height=3.1cm]{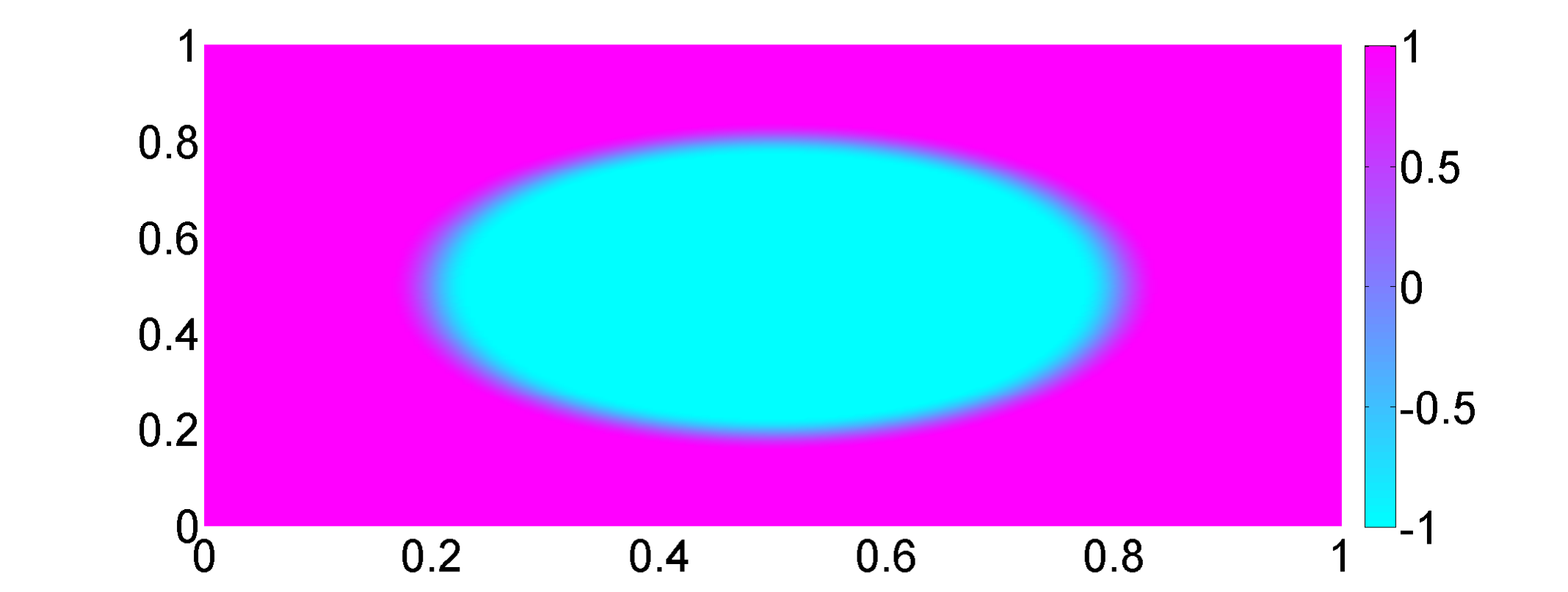}}
\caption{\label{fig:square} Evolution of Initial Square Active Set configuration}
\end{figure}
    
\begin{table}[ht]
\caption{\label{tab:evol_square}Initial Square Active Set Configuration}
\begin{center}
\begin{tabular}{rrrr|rrr|rrr|rrr}
                                      &        &         &        & \multicolumn{3}{|c|}{\tt bd} & 
       \multicolumn{3}{|c|}{\tt bdsc} & 
\multicolumn{3}{|c}{\tt btdsc}                                                                                                                      \\
\cline{5-13} 
 $1/h$                                & \#tstp & \#trunc & \%trun & it1                          & it2 & time & it1 & it2 & time & it1 & it2 & time \\
\hline
256                                   & 1      & 0       & 0.00   & 25                           & 18  & 18.1 & 11  & 14  & 15.8 & 12  & 12  & 16.3 \\
                                      & 20     & 57176   & 86.56  & 21                           & 17  & 17.6 & 16  & 14  & 18.3 & 9   & 8   & 10.8 \\
                                      & 40     & 57358   & 86.84  & 21                           & 17  & 18.3 & 16  & 13  & 18.6 & 9   & 8   & 10.8 \\
                                      & 60     & 57368   & 86.85  & 21                           & 16  & 17.8 & 16  & 13  & 17.7 & 9   & 8   & 10.9 \\
                                      & 80     & 57447   & 86.97  & 21                           & 17  & 17.3 & 16  & 13  & 18.7 & 9   & 8   & 10.8 \\
                                      & 100    & 57426   & 86.94  & 21                           & 16  & 17.3 & 16  & 13  & 17.6 & 9   & 8   & 11.1 \\
                                      & 120    & 57362   & 86.84  & 21                           & 16  & 16.8 & 16  & 13  & 17.2 & 9   & 8   & 10.8 \\
                                      & 140    & 57346   & 86.82  & 21                           & 16  & 17.0 & 16  & 13  & 17.3 & 9   & 8   & 10.6 \\
                                      & 160    & 57366   & 86.85  & 21                           & 16  & 17.7 & 16  & 13  & 17.7 & 9   & 8   & 10.9 \\
                                      & 180    & 57312   & 86.77  & 21                           & 16  & 16.6 & 16  & 13  & 17.2 & 9   & 8   & 10.7 \\
                                      & 200    & 57313   & 86.77  & 21                           & 16  & 17.8 & 16  & 12  & 16.9 & 9   & 8   & 11.2 \\
\hline
 400                                  & 1      & 0       & 0.00   & 27                           & 16  & 59.0 & 14  & 9   & 52.9 & 15  & 9   & 54.9 \\
                                      & 20     & 139237  & 86.58  & 21                           & 16  & 50.3 & 15  & 16  & 64.9 & 9   & 9   & 35.4 \\
                                      & 40     & 139647  & 86.84  & 21                           & 16  & 49.6 & 15  & 15  & 66.9 & 9   & 7   & 49.4 \\
                                      & 60     & 139839  & 86.96  & 20                           & 16  & 60.9 & 16  & 16  & 67.2 & 9   & 7   & 46.0 \\
                                      & 80     & 139823  & 86.95  & 21                           & 16  & 52.5 & 16  & 16  & 67.4 & 9   & 7   & 45.8 \\
                                      & 100    & 139858  & 86.97  & 21                           & 16  & 50.4 & 16  & 14  & 58.1 & 9   & 12  & 45.9 \\
                                      & 120    & 139788  & 86.93  & 21                           & 17  & 52.5 & 16  & 14  & 60.7 & 9   & 9   & 35.5 \\
                                      & 140    & 139731  & 86.89  & 21                           & 16  & 50.7 & 16  & 16  & 64.5 & 9   & 8   & 32.1 \\
                                      & 160    & 139735  & 86.89  & 21                           & 16  & 52.0 & 16  & 15  & 66.2 & 9   & 12  & 47.3 \\
                                      & 180    & 139739  & 86.90  & 21                           & 16  & 51.2 & 16  & 13  & 56.9 & 9   & 12  & 49.4 \\
                                      & 200    & 139720  & 86.89  & 19                           & 16  & 50.6 & 16  & 13  & 53.9 & 9   & 9   & 34.7 \\
\hline
\end{tabular}
\end{center}
\end{table}

\begin{table}[ht]
  \caption{\label{tab:artificial}Random Artificial Active Set Configuration.}
  \begin{center}
    \begin{tabular}{rrr|rrr|rrr|rrr}
            &           &        & \multicolumn{3}{|c|}{\tt bd} & \multicolumn{3}{|c|}{\tt bdsc} & \multicolumn{3}{|c}{\tt btdsc}                \\
      \cline{4-12}     
      $1/h$ & $\epsilon$       & \%trun & it1                          & it2                            & time  & it1 & it2 & time  & it1 & it2 & time  \\
      \hline
      \multirow{12}*{256} 
            & \multirow{4}*{$10^{-2}$} & 0.00   & 16                           & 16                             & 27.9  & 17  & 20  & 47.4  & 9   & 14  & 31.5  \\
            &           & 19.86  & 26                           & 24                             & 31.4  & 16  & 17  & 32.2  & 16  & 17  & 30.8  \\
            &           & 67.11  & 19                           & 18                             & 17.6  & 14  & 13  & 16.2  & 14  & 13  & 15.3  \\   
            &           & 98.73  & 8                            & 6                              & 6.7   & 5   & 5   & 5.6   & 5   & 5   & 5.6   \\  \cline{3-12} 
            & \multirow{4}*{$10^{-5}$}& 0.00   & 18             & 13                             & 8.6  & 16  & 29  & 32.1  & 7   & 10  & 11.9  \\
            &           & 19.86  & 22                           & 21                             & 13.1  & 23  & 23  & 26.9  & 9  & 10  & 11.4  \\
            &           & 67.11  & 26                           & 24                             & 19.2  & 19  & 18  & 17.5  & 10  & 10  & 9.9  \\
            &           & 98.73  & 25                           & 22                             & 29.6  & 13  & 11  & 16.9  & 10  & 9  & 13.9  \\  \cline{3-12} 
            & \multirow{4}*{$10^{-8}$} & 0.00   & 6                            & 4                              & 1.4   & 3   & 6   & 10.5  & 2   & 3   & 4.0   \\
            &           & 19.86  & 6                            & 4                              & 1.9   & 7   & 5   & 6.3   & 4   & 3   & 4.1   \\
            &           & 67.11  & 6                            & 4                              & 3.4   & 7   & 5   & 5.5   & 4   & 3   & 3.9   \\
            &           & 98.73  & 6                            & 4                              & 6.7   & 7   & 3   & 7.5   & 4   & 2   & 5.6   \\
      \hline 
     \multirow{12}*{400} 
            & \multirow{4}*{$10^{-2}$} & 0.0    & 16            & 16                             & 65.3  & 17  & 16  & 111.2 & 8   & 9   & 71.4  \\
            &           & 19.7   & 24                           & 23                             & 69.9  & 17  & 16  & 72.9  & 16  & 17  & 69.9  \\
            &           & 66.6   & 17                           & 16                             & 36.9  & 12  & 12  & 32.9  & 13  & 11  & 32.6  \\
            &           & 98.7   & 6                            & 6                              & 13.7  & 4   & 6   & 13.1  & 5   & 5   & 12.4  \\  \cline{3-12} 
            &\multirow{4}*{$10^{-5}$} & 0.0    & 18                           & 13                             & 31.2  & 16  & 25  & 93.6 & 7   & 10  & 51.4  \\
            &           & 19.7   & 26                           & 25                             & 52.4 & 22  & 22  & 73.9 & 11  & 11  & 39.7  \\
            &           & 66.6   & 35                           & 34                             & 69.7 & 18  & 18  & 45.1 & 16  & 15  & 39.0 \\
            &           & 98.7   & 35                           & 35                             & 98.5  & 17  & 17  & 51.8  & 17  & 16  & 49.4  \\ \cline{3-12} 
            & \multirow{4}*{$10^{-8}$} & 0.0    & 8                            & 5                              & 3.7   & 3   & 6   & 14.2  & 2   & 4   & 9.5   \\
            &           & 19.7   & 8                            & 6                              & 5.3   & 7   & 7   & 16.3  & 4   & 4   & 10.5  \\
            &           & 66.6   & 7                            & 4                              & 9.5   & 7   & 5   & 13.6  & 4   & 3   & 9.1   \\
            &           & 98.7   & 6                            & 4                              & 17.6  & 7   & 5   & 22.6  & 4   & 3   & 15.3  \\
      \hline
\end{tabular}
\end{center}
\end{table}
 
\subsubsection{Artificial Randomly Truncated System} 
This is a non-evolution example. Here we choose $\epsilon=\tau,$ where we study 
the effectiveness of the solver for various values of $\epsilon.$ We artifically create truncations. In Table \ref{tab:artificial}, we show experiments with this test case, and compare the iterations, and CPU time for iterative solve. We notice that for each mesh sizes, we observe a slight increase in the iteration count from $\epsilon=10^{-2}$ to $\epsilon=10^{-5},$ then decreases again for $\epsilon=10^{-8}.$ The iteration counts for $h=1/400$ are comparable to those for $h=1/256.$ As before, {\tt btdsc} remains the fastest, except in some cases, when there are small truncations when {\tt bd} converges faster. In particular, for $\epsilon=\tau=10^{-8},$ {\tt bd} is fastest in most cases.

\section{Conclusion}
For the solution of large scale linear saddle point problems on truncated domains, we studied and compared three preconditioners. We also derived eigenvalue bounds and condition number estimates for untruncated problem, and related those bounds to the related truncated problem whenever possible. The numerical experiments suggest that these are effective preconditioners for such problems. The work is in progress to extend these solvers to three space dimensions, and to multicomponent phase field models. Note that eigenvalue bounds and condition number estimates are independent of space dimensions and should essentially hold for higher dimensions for appropriate discretizations.
\section{Acknowledgement}
This research was partially carried out at IIIT, Hyderabad and at Einstein Foundation, Berlin. 

\bibliography{mybib}
\bibliographystyle{plain}

\end{document}